\title{A family of semifields in characteristic $2$}
\author{
Daniele Bartoli\\
Departments of Mathematics \\
Ghent University\\
B 9000 Gent, Belgium\\
\\
J\"urgen Bierbrauer\\
Department of Mathematical Sciences\\
Michigan Technological University\\
Houghton, Michigan 49931 (USA)\\
\\
Gohar Kyureghyan \\
Institut f\"{u}r Algebra und Geometrie \\
Otto-von-Guericke Universit\"{a}t Magdeburg\\
 D-39106 Magdeburg, Germany \\
\\
Massimo Giulietti, Stefano Marcugini, Fernanda Pambianco\\
Dipartimento di Matematica e Informatica\\ Universit\`a degli Studi di Perugia \\ 06123 Perugia, Italy
\thanks{The research of D. Bartoli, M. Giulietti, S. Marcugini, and F. Pambianco
was supported in part by Ministry for Education, University
and Research of Italy (MIUR) (Project PRIN 2012 ''Geometrie di Galois e
strutture di incidenza'')
 and by the Italian National Group for Algebraic and Geometric Structures
and their Applications (GNSAGA - INdAM).
The work of D. Bartoli was supported also by the European
Community under a Marie-Curie Intra-European Fellowship (FACE
project: number 626511).
J. Bierbrauer's research was supported in part by GNSAGA - INdAM.}}
\begin{document}
\maketitle
\newtheorem{Theorem}{Theorem}
\newtheorem{Proposition}{Proposition}
\newtheorem{Lemma}{Lemma}
\newtheorem{Definition}{Definition}
\newtheorem{Corollary}{Corollary}
\newtheorem{Example}{Example}
\newtheorem{Problem}{Problem}
\newtheorem{Remark}{Remark}
\def\nz{\mathbb{N}}
\def\gz{\mathbb{Z}}
\def\rz{\mathbb{R}}
\def\ef{\mathbb{F}}
\def\CC{\mathbb{C}}
\def\o{\omega}
\def\p{\overline{\omega}}
\def\e{\epsilon}
\def\a{\alpha}
\def\b{\beta}
\def\g{\gamma}
\def\d{\delta}
\def\l{\lambda}
\def\s{\sigma}
\def\t{\tau}
\def\O{\Omega}
\def\sm{\setminus}
\def\bsl{\backslash}
\def\la{\longrightarrow}
\def\arr{\rightarrow}
\def\ov{\overline}
\newcommand{\D}{\displaystyle}
\newcommand{\T}{\textstyle}

\begin{abstract}
We construct and describe the basic properties of a family of semifields in characteristic $2.$
The construction relies on the properties of projective polynomials over finite fields.
We start by associating non-associative products to each such polynomial. The resulting
presemifields form the degenerate case of our family. They are isotopic to the Knuth semifields
which are quadratic over left and right nucleus. The non-degenerate members of our family
display a very different behaviour. Their left and right nucleus agrees with the center, the middle
nucleus is quadratic over the center. None of those semifields is isotopic or Knuth equivalent to a
commutative semifield. As a by-product we obtain the complete taxonomy of the characteristic $2$ semifields
which are quadratic over the middle nucleus, bi-quadratic over the left and right nucleus and not
isotopic to twisted fields. This includes {determining} when
two such semifields are isotopic and the order of the autotopism group.
\end{abstract}
{\bf Keywords}: semifields; isotopy; projective polynomials; nuclei; center; Knuth semifields; twisted fields.\\

\noindent {\bf MSC}: 12K10; 51E15; 51A40.

\section{Introduction}
A finite {\bf presemifield} of order $q=p^r$ ($p$ a prime) is an algebra $(F,+,*)$ of order $q$
which satisfies the axioms of the field of order $q$ with the possible exception of the associativity
of multiplication and the existence of an identity element of multiplication. A presemifield is a {\bf semifield}
if in addition an identity element of multiplication exists.
The addition in a presemifield may be identified with the addition in the field of the same order.
A presemifield is {\bf commutative} if its multiplication is commutative.
A geometric motivation to study (pre)semifields comes from the fact that there is a
bijection between presemifields  and projective planes of the same order which are
translation planes and also duals of translation planes.
Presemifields $(F,+,*)$ and $(F,+,\circ )$ of order $q=p^r$ are defined to be {\bf isotopic} if there
exist elements $\a_1, \a_2, \b\in GL(r,p)$ such that $\b (\a_1(x)*\a_2(y))=x\circ y$ always holds.
This equivalence relation is motivated by the geometric link as well. In fact,
two presemifields are isotopic if and only if they determine isomorphic projective planes
(see Albert \cite{Albert60}).
\par
General constructions of semifields which give families of examples that exist in arbitrary characteristic
and in each characteristic $p$ for an infinity of dimensions $r$ are hard to come by.
A classical example are the Albert twisted fields \cite{Alberttwist}.

Recently, a new family of
presemifields in odd characteristic $p$ has been defined in \cite{JBprojgeneral} by using the theory of projective polynomials and Albert twisted fields as ingredients. This is a large family, since it contains the
Budaghyan-Helleseth family of odd characteristic commutative semifields (see \cite{BuHe11}) and
an infinity of semifields which are not isotopic to commutative semifields. Examples of the new family exist for each order $q=p^r$ where $p$ is a prime and $r=2m$ is even.

The aim of the present paper is to construct and investigate an analogous of such a family in characteristic $2$. A first step in this direction was taken in \cite{semichar2ACCT}. Our approach
 is based on the {projection method}, as described in  \cite{JBprojgeneral}. Basic ingredients for our construction are {\bf projective polynomials.}
We use the theory of projective polynomials over finite fields as given in Bluher \cite{Bluher}.
The definition of the characteristic $2$ family $B(2,m,s,l,t)$ is in Subsection \ref{basicdefsub}. The underlying
projective polynomial is
$p_{s,t}(X)=p_1X^{2^s+1}+p_2X^{2^s}+p_3X+p_4\in\ef_{2^m}[X]$ in Definition \ref{Bfamilyintrodef}.
\par
In fact, the theory of presemifields sheds some more light on the theory of projective polynomials.
This is seen in Section \ref{assocprodsection} where we associate a multiplication $*$ on $\ef_{2^{2m}}$
to each such projective polynomial $p_{s,t}(X)\in\ef_{2^m}[X]$ (see Definition \ref{assproddef})
in such a way that $p_{s,t}(X)$ has no zeroes in $\ef_{2^m}$ if and only if the algebra $(F,+,*)$ is a
presemifield (Theorem \ref{multKnuthsecondchar2theorem}). The resulting presemifields form the degenerate case
$l=0$ of our characteristic $2$ family. We use this link to define the generic family $B(2,m,s,l,t)$
where $0\not=l\in\ef_{2^m}$ in Subsection \ref{basicdefsub} and to study its properties later on.
In particular we prove that none of the presemifields $B(2,m,s,l,t)$ is isotopic to a commutative semifield
(Section \ref{noncommutsection}). This contrasts with the odd characteristic case where the Budaghyan-Helleseth
family of commutative semifields is contained in our family and it remains an open problem if our family
contains commutative examples which do not belong to the Budaghyan-Helleseth family.
A similar feature concerns the nuclei. Here $x\in F$ belongs to the {\bf left nucleus} of a semifield $(F,+,*)$
if the associativity equation $x*(y*z)=(x*y)*z$ holds for all $y,z.$ Analogous statements characterize the
{\bf middle nucleus} and the {\bf right nucleus.}
Isotopic semifields have isomorphic nuclei. The nuclei correspond to
certain important subgroups of the collineation group of the corresponding
projective plane. We determine the nuclei of the semifields isotopic to $B(2,m,s,l,t)$ in Section \ref{nucleisection}.
Again this is a more complete result than in the odd characteristic case where the determination of the middle
nucleus remains an open problem. As a by-product of our results in a parametric special case
we obtain a complete characterization of the semifields in characteristic $2$ which are quadratic over one of the nuclei,
quartic over the center and are not isotopic to generalized twisted fields.\\
The smallest order in which examples of our generic family exist is $256.$ The corresponding
presemifields $B(2,4,2,l,t)$ where $0\not=l\in L, l^5\not=1$ and $t=[p_1,p_2,p_3,p_4]\in L^4$ is legitimate
in the sense of Definition \ref{Bfamilyintrodef} come in three isotopy classes, each with autotopism group of
order $450$ (see Section \ref{s=m/2section}). They correspond to three isomorphism classes of
projective semifield planes of order $2^8$
each of which has $450\times 2^{24}$ collineations.
\par
In Subsection \ref{standardp=2sub} we introduce compact notation. The definition of our family is in
Subsection \ref{basicdefsub}.
We close this introduction with a detailed description of the results of this paper in Subsection \ref{structuresub}.

\subsection{A standard situation in characteristic $2$}
\label{standardp=2sub}

All our semifields have even dimension $r=2m.$
Let $F=GF(2^{2m})\supset L=GF(2^m)$ and $T,N:F\la L$ the norm and trace functions. Let $\mu\in L$ be of absolute trace $=1$ and
$z\in F$ such that $z^2+z=\mu .$ Then $z\notin L$ and we use $1,z$ as a basis of $F\vert L.$ In particular we write $x=a+bz=(a,b)$ where $a,b\in L$ and refer to
$a,b$ as the real and imaginary part $Re(x)$ and $Im(x),$ respectively.
\par
Let $0\leq s<2m$ and $x\mapsto x^{\s}$ be the corresponding field automorphism, where $\s =2^s,$ let $K_1=\ef_{2^{\gcd(m,s)}}$ be the fixed field of $\s$ in $L.$
Then $z^4=z^2+\mu^2=z+\mu^2+\mu .$ Continuing like that we obtain the following:

\begin{Lemma}
Let $\mu_s=\sum_{i=0}^{s-1}\mu^{2^i}.$ Then
$z^{\s}=z+\mu_s$ and $x^{\s}=(a^{\s}+\mu_sb^{\s},b^{\s}).$
\end{Lemma}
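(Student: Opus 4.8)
The plan is to prove both assertions by a short induction on $s$, using only two facts: that the Frobenius map $y\mapsto y^2$ is additive on $F=GF(2^{2m})$ (so $(u+v)^{\s}=u^{\s}+v^{\s}$ for $\s=2^s$), and that it stabilizes $L=GF(2^m)$ setwise, so that $u\in L$ implies $u^{\s}\in L$. In particular $\mu\in L$ gives $\mu_s=\sum_{i=0}^{s-1}\mu^{2^i}\in L$, so all the "real/imaginary part'' bookkeeping below stays inside $L$.

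First I would establish $z^{\s}=z+\mu_s$ by induction on $s$. For $s=0$ the sum defining $\mu_0$ is empty, hence $\mu_0=0$ and $z^{2^0}=z=z+\mu_0$. Assuming $z^{2^s}=z+\mu_s$, I would square this identity and insert the defining relation $z^2=z+\mu$:
\[
z^{2^{s+1}}=\left(z^{2^s}\right)^2=(z+\mu_s)^2=z^2+\mu_s^2=z+\mu+\mu_s^2 .
\]
The only calculation needing a word of care is the re-indexing $\mu_s^2=\sum_{i=0}^{s-1}\mu^{2^{i+1}}=\sum_{j=1}^{s}\mu^{2^j}$, which yields $\mu+\mu_s^2=\sum_{j=0}^{s}\mu^{2^j}=\mu_{s+1}$; this is exactly what makes the truncated geometric sum $\mu_s$ close up correctly. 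Hence $z^{2^{s+1}}=z+\mu_{s+1}$, completing the induction. (This just formalizes the computation $z^4=z^2+\mu^2=z+\mu^2+\mu$ already displayed before the Lemma.)

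For the second assertion I would simply expand $x=a+bz$ with $a,b\in L$ using additivity of $\s$ together with the first part:
\[
x^{\s}=a^{\s}+b^{\s}z^{\s}=a^{\s}+b^{\s}(z+\mu_s)=(a^{\s}+\mu_s b^{\s})+b^{\s}z .
\]
Since $a^{\s},b^{\s},\mu_s\in L$, this is the expression of $x^{\s}$ in the basis $1,z$, so $Re(x^{\s})=a^{\s}+\mu_s b^{\s}$ and $Im(x^{\s})=b^{\s}$, i.e. $x^{\s}=(a^{\s}+\mu_s b^{\s},b^{\s})$. There is no genuine obstacle in this proof; the whole content is the inductive identity for $\mu_s$, and once that is in hand the coordinate formula is immediate.
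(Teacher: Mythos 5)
Your proof is correct and is exactly the formalization of the paper's argument: the paper derives $z^4=z^2+\mu^2=z+\mu^2+\mu$ and says ``continuing like that,'' which is precisely your induction on $s$ using $z^2=z+\mu$, and the coordinate formula for $x^{\s}$ then follows from additivity of Frobenius just as you wrote. No gaps.
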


In particular $\mu_0=0, \mu_1=\mu, \mu_2=\mu +\mu^2$ and $\mu_m=tr_{L\vert\ef_2}(\mu )=1$
(because of the transitivity of the trace), and $\ov{z}=z^{2^m}=z+1.$
Further $\mu_{s+m}=\mu_s+1.$
We have $\ov{x}=(a+b,b), T(x)=Im(x)=b,$ and
$$(a,b)(c,d)=(ac+\mu bd,ad+bc+bd).$$
In particular $1/z=(1/\mu ,1/\mu )$ and
$1/(a,b)=(1/D)(a+b,b),$ where $D=a^2+ab+\mu b^2.$
The conjugates of $x$ are
$$x^2=(a^2+\mu b^2,b^2), x^4=(a^4+(\mu^2+\mu )b^4,b^4),\dots
,x^{\s}=(a^{\s}+(\mu^{2^{s-1}}+\dots +\mu )b^{\s},b^{\s}).$$

\subsection{A family of semifields}
\label{basicdefsub}

\begin{Definition}
\label{Bfamilyintrodef}
Let $m,s, \s$ as in Subsection \ref{standardp=2sub}.
The quadruple $t=[p_1,p_2,p_3,p_4]\in L^4$ is {\bf legitimate} if the polynomial
 $p_{s,t}(X)=p_1X^{\s+1}+p_2X^{\s}+p_3X+p_4$ has no roots in $L.$
Let $\Omega =\Omega (m,s)$ be the set of legitimate quadruples.
Let further $l\in L$ such that either $l=0$ or $l\in L^{*}\sm (L^{*})^{\s-1}.$
The presemifield defined by

\begin{equation}
\label{circgeneralprodequ}
x\circ y=(p_1ac^{\s}+lp_1a^{\s}c+p_2bc^{\s}+lp_2a^{\s}d+p_3ad^{\s}+lp_3b^{\s}c+
p_4bd^{\s}+lp_4b^{\s}d,ad+bc)
\end{equation}

will be denoted $B(2,m,s,l,t).$
\end{Definition}

In order to obtain an expression of $x\circ y$ using constants from the larger field $F$ we use
the following terminology:

\begin{Definition}
\label{FLcorresponddef}
Let $C_1=(v_1,h_1), C_2=(v_2,h_2)\in F.$ The quadruple $t=t(C_1,C_2)=[p_1,p_2,p_3,p_4]\in L^4$ {\bf corresponding to} the pair
$(C_1,C_2)\in F^2$ is defined by
$$p_1=h_1+h_2, p_2=v_1+v_2+h_1+h_2, p_3=v_1+v_2+\mu_sh_1+(\mu_s+1)h_2,$$
$$p_4=\mu_sv_1+(\mu_s+1)v_2+(\mu_s+\mu )h_1+(\mu_s+\mu +1)h_2.$$
\end{Definition}

\begin{Proposition}
\label{circFlevelproductprop}
Let $t=t(C_1,C_2)=[p_1,p_2,p_3,p_4].$ Then
\begin{equation}
\label{circFlevelprodequ}
x\circ y=T((C_1\ov{y}^{\s}+C_2y^{\s})x)+lT((C_2y+\ov{C_1}\ov{y})x^{\s})+T(x\ov{y})z.
\end{equation}
\end{Proposition}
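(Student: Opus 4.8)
The plan is to verify the identity in Proposition \ref{circFlevelproductprop} by a direct but organized computation, expanding both sides in the basis $\{1,z\}$ and matching real and imaginary parts. The right-hand side of (\ref{circgeneralprodequ}) already has imaginary part $ad+bc$, so the first sanity check is that the imaginary part of (\ref{circFlevelprodequ}) equals $ad+bc$. Since $T(\cdot)$ takes values in $L$ and contributes only to the real part, the only imaginary contribution on the right of (\ref{circFlevelprodequ}) is from the term $T(x\overline y)z$; hence I must show $T(x\overline y)=ad+bc$. Using $\overline y=(c+d,d)$ and the product formula $(a,b)(c+d,d)=(a(c+d)+\mu bd,\, ad+b(c+d)+bd)=(ac+ad+\mu bd,\, ad+bc)$, we read off $T(x\overline y)=Im(x\overline y)=ad+bc$, as required. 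This disposes of the imaginary part and reduces everything to checking the real part.

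For the real part, the strategy is to compute each of the two trace terms $T((C_1\overline y^{\sigma}+C_2 y^{\sigma})x)$ and $lT((C_2 y+\overline{C_1}\,\overline y)x^{\sigma})$ separately in coordinates, using the Lemma's formula $x^{\sigma}=(a^{\sigma}+\mu_s b^{\sigma}, b^{\sigma})$ together with $\overline x=(a+b,b)$ and the expression $1,z$ with $z^{\sigma}=z+\mu_s$. I would write $C_i=(v_i,h_i)$, expand $\overline y^{\sigma}$ and $y^{\sigma}$ in coordinates, multiply by $x=(a,b)$ using the product rule $(a,b)(c,d)=(ac+\mu bd, ad+bc+bd)$, and then extract the imaginary part (which is what $T$ returns). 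Collecting the coefficients of the eight monomials $ac^{\sigma}, a^{\sigma}c, bc^{\sigma}, a^{\sigma}d, ad^{\sigma}, b^{\sigma}c, bd^{\sigma}, b^{\sigma}d$ — the same eight that appear in (\ref{circgeneralprodequ}) — and comparing with the definitions of $p_1,p_2,p_3,p_4$ in Definition \ref{FLcorresponddef}, I expect each coefficient to match. The second term, being multiplied by $l$, should account precisely for the four $l$-terms in (\ref{circgeneralprodequ}), and the first term for the four remaining ones.

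The main obstacle is bookkeeping: the substitution $z^{\sigma}=z+\mu_s$ and the conjugation $\overline{\phantom{x}}$ interact, so the constants $p_i$ come out as $L$-linear combinations of $v_1,v_2,h_1,h_2$ with coefficients built from $\mu$ and $\mu_s$, and one must be careful with the relations $\mu_{s+m}=\mu_s+1$ and $z^2+z=\mu$ (equivalently $z\overline z = z^2+z = \mu$, $z+\overline z=1$) that are used to simplify. In particular, verifying the coefficient of $b^{\sigma}d$ (which involves $p_4$ and hence all four of $v_1,v_2,h_1,h_2$ as well as $\mu_s$ and $\mu$) is the most delicate; here one uses that $T(x^\sigma)$ and products like $T(z x^\sigma)$ expand with extra $\mu_s$ terms, and these are exactly what the $(\mu_s+\cdot)$ shifts in Definition \ref{FLcorresponddef} absorb. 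Once the dictionary $[p_1,p_2,p_3,p_4]=t(C_1,C_2)$ is confirmed to make the eight coefficients agree, the real parts of (\ref{circgeneralprodequ}) and (\ref{circFlevelprodequ}) coincide, and together with the imaginary-part check the proposition follows. A useful cross-check along the way is the special case $l=0$, where (\ref{circFlevelprodequ}) should reduce to the product $x*y=T((C_1\overline y^{\sigma}+C_2 y^{\sigma})x)+T(x\overline y)z$ of the degenerate presemifields from Section \ref{assocprodsection}, so consistency with Definition \ref{assproddef} and Theorem \ref{multKnuthsecondchar2theorem} gives independent confidence in the coordinate formula before the $l$-term is reinstated.
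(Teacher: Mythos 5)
Your proposal is correct and follows essentially the same route as the paper: the paper's proof is exactly the direct coordinate calculation you describe, expanding $xy^{\sigma}$ and $x\ov{y}^{\sigma}$ (and the analogous products against $x^{\sigma}$) in the basis $1,z$ and matching the eight monomial coefficients against Definition \ref{FLcorresponddef}, with the imaginary part handled by $T(x\ov{y})=ad+bc$ just as you note. The computation does close as you predict — the first trace term produces the four non-$l$ terms and the second the four $l$-terms of (\ref{circgeneralprodequ}).
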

\begin{proof} This is a direct calculation, using
$xy^{\s}=(a,b)(c^{\s}+\mu_sd^{\s},d^{\s})=(ac^{\s}+\mu_sad^{\s}+\mu bd^{\s},
ad^{\s}+bc^{\s}+(\mu_s+1)bd^{\s}),$
$x\ov{y}^{\s}=(a,b)(c^{\s}+(\mu_s+1)d^{\s},d^{\s})=(ac^{\s}+(\mu_s+1)ad^{\s}+\mu bd^{\s},
ad^{\s}+bc^{\s}+\mu_sbd^{\s})$ and analogous expressions.
\end{proof}

\subsection{The structure of the paper}
\label{structuresub}

In the remainder of the paper we study the presemifields $B(2,m,s,l,t)$ and the semifields isotopic
to them. The proof that the $B(2,m,s,l,t)$ are indeed presemifields is in Section \ref{lformsection}.
 The multiplication $x\circ y$ in $B(2,m,s,l,t)$ is given in (\ref{circgeneralprodequ}) (on the level of the
field $L$), as well as in Proposition \ref{circFlevelproductprop} in terms of the larger field $F.$
If the automorphism associated to $\s$ is the identity on $L$ (cases $s=0, s=m$),
 it follows from the general form of $x\circ y$ that
$L$ is in the center of a semifield isotopic to $B(2,m,s,l,t)$
(see \cite{JBprojgeneral}, Proposition 3).
This implies that we are in the field case.
It may therefore be assumed that $s\not=0, s\not=m.$
Isotopies are studied in Section \ref{isotopysection}.
In Section \ref{restrisotopsection} we use this to define a group,
direct product of a cyclic group and a group $GL(2,L),$ which permutes our presemifields
(for given $m,s,l$).
\par
Observe that the condition on $l\in L$ is independent of the
conditions on the quadruple $t.$ The special case $l=0$ is degenerate but interesting as the
corresponding semifields are those which are quadratic over left and right nucleus
(Knuth \cite{KnuthJAlg}, see Section \ref{Knuthsemisection}). In the remainder of the paper
we exclude the degenerate case $l=0$ from the discussion.
Cases $m/\gcd(m,s)$ even and $m/\gcd(m,s)$ odd behave rather differently.
It is shown in Section \ref{Cfamilysection} that in the former case the multiplication simplifies.
We study case $s=m/2$ in Section \ref{s=m/2section}. The semifields which are quadratic over one of
the nuclei and quartic over the center have been classified in Cardinali-Polverino-Trombetti \cite{CPT}.
Using this we show that up to equivalence in the Knuth cube (see \cite{KnuthJAlg})
our semifields in case $s=m/2$ are precisely
those characteristic 2 semifields which have this property and are not isotopic to generalized twisted fields
 \cite{Alberttwist} or to Hughes-Kleinfeld semifields \cite{HuK}. We also obtain a complete taxonomy of
those characteristic 2 semifields in Section \ref{s=m/2section}. We determine when two of them are
isotopic and we determine the autotopism groups (Theorem \ref{Bqwcensustheorem}).
In Section \ref{noncommutsection} it is shown that $B(2,m,s,l,t), s\not=0,s\not=m$
is never isotopic to a commutative semifield.
The nuclei of the semifields isotopic to $B(2,m,s,l,t), l\not=0$ are studied in Section \ref{nucleisection}:
the left and right nucleus agree with the center of order $2^{\gcd(m,s)},$
whereas the middle nucleus is a quadratic extension of the center.
\par
We start in Section \ref{assocprodsection} by associating non-associative
products to projective polynomials. This is done here in characteristic $2$
but it works over any positive characteristic. This leads to case $l=0$
of Definition \ref{Bfamilyintrodef} and to Knuth semifields.

\section{The associated product}
\label{assocprodsection}

\begin{Definition}
\label{assproddef}
Let $C_1=(v_1,h_1), C_2=(v_2,h_2)\in F, t=t(C_1,C_2)=[p_1,p_2,p_3,p_4],$

\begin{equation}
P_{C_1,C_2,s}(X)=C_2X^{\s+1}+\ov{C_1}X^{\s}+C_1X+\ov{C_2}\in F[X]
\end{equation}

and

\begin{equation}
\label{assprodequ}
x*y=T((C_1y^{\s}+C_2\ov{y}^{\s})x)+T(xy)z
\end{equation}
be the multiplication {\bf associated to} the projective polynomial $P_{C_1,C_2,s}(X).$
Consider also the isotope

\begin{equation}
x\circ y=x*\ov{y}=T((C_1\ov{y}^{\s}+C_2y^{\s})x)+T(x\ov{y})z
\end{equation}

\end{Definition}

Comparison with (\ref{circFlevelprodequ}) shows that $x\circ y$ in Definition \ref{assproddef}
is precisely the multiplication in $B(2,m,s,0,t).$

\begin{Lemma}
\label{fromptovlemma}
The inverse of the transformation in Definition \ref{assproddef} is given by
$$v_1=(\mu_s+\mu )p_1+\mu_sp_2+p_3+p_4, h_1=\mu_sp_1+p_2+p_3,$$
$$v_2=(\mu_s+\mu +1)p_1+(\mu_s+1)p_2+p_3+p_4, h_2=(\mu_s+1)p_1+p_2+p_3.$$
\end{Lemma}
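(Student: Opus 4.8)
The plan is to note that the transformation $t=t(C_1,C_2)$ of Definition~\ref{FLcorresponddef}, viewed as a map $(v_1,h_1,v_2,h_2)\mapsto(p_1,p_2,p_3,p_4)$, is $L$-linear, so proving the lemma amounts to solving the defining linear system
\begin{align*}
p_1 &= h_1+h_2,\\
p_2 &= v_1+v_2+h_1+h_2,\\
p_3 &= v_1+v_2+\mu_sh_1+(\mu_s+1)h_2,\\
p_4 &= \mu_sv_1+(\mu_s+1)v_2+(\mu_s+\mu)h_1+(\mu_s+\mu+1)h_2
\end{align*}
for $v_1,h_1,v_2,h_2$ and checking that the unique solution is the one displayed in the statement.

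I would carry out the elimination as follows. Put $w=h_1+h_2$ and $u=v_1+v_2$; the first two equations give, in characteristic $2$, $w=p_1$ and $u=p_1+p_2$. The third equation reads $p_3=u+\mu_sw+h_2$, so $h_2=p_3+u+\mu_sw=(\mu_s+1)p_1+p_2+p_3$ and hence $h_1=w+h_2=\mu_sp_1+p_2+p_3$. The fourth equation reads $p_4=\mu_su+(\mu_s+\mu)w+v_2+h_2$, so after substituting $u,w,h_2$ one gets $v_2=p_4+\mu_su+(\mu_s+\mu)w+h_2=(\mu_s+\mu+1)p_1+(\mu_s+1)p_2+p_3+p_4$ and finally $v_1=u+v_2=(\mu_s+\mu)p_1+\mu_sp_2+p_3+p_4$. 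These are precisely the four formulas in the statement; moreover the elimination just performed, all pivots being equal to $1$, shows in passing that the transformation is invertible, so the displayed map is genuinely its inverse.

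There is no substantial obstacle here: the only delicate point is the characteristic $2$ bookkeeping, where $+$ and $-$ coincide and like terms cancel in pairs (for instance $\mu_s+\mu_s+\mu_s=\mu_s$). A reader preferring not to run the elimination can instead substitute the four claimed formulas for $v_1,h_1,v_2,h_2$ into each of the four expressions for $p_1,\dots,p_4$ in Definition~\ref{FLcorresponddef} and check that every one collapses to an identity after collecting the coefficients of $p_1,\dots,p_4$.
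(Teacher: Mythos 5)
Your elimination is correct: setting $w=h_1+h_2$ and $u=v_1+v_2$ and back-substituting yields exactly the four displayed formulas, and I verified each coefficient (e.g.\ for $v_2$ the $p_1$-coefficient is $\mu_s+\mu_s+\mu+\mu_s+1=\mu_s+\mu+1$ in characteristic $2$). The paper states this lemma without proof as a direct calculation, and your argument is precisely that calculation, so nothing further is needed.
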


\begin{Theorem}
\label{multKnuthchar2theorem}
The following are equivalent:
\begin{itemize}
\item $(F,*)$ is a presemifield.
\item $T(C_1x\ov{x}^{\s}+C_2x^{\s+1})\not=0$ for all $0\not=x\in F.$
\item $P_{C_1,C_2,s}(X)$ has no root of norm $1.$
\end{itemize}
\end{Theorem}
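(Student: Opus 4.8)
The plan is to establish the three equivalences by a cycle of implications, exploiting the bilinear description of $x*y$ in \eqref{assprodequ} and the link between the constants $C_1,C_2$ and the polynomial $P_{C_1,C_2,s}(X)$. First observe that $(F,+,*)$ is a presemifield if and only if the product has no zero-divisors, i.e.\ $x*y=0$ with $x,y\in F$ forces $x=0$ or $y=0$. Since $x*y=T((C_1y^{\s}+C_2\ov{y}^{\s})x)+T(xy)z$, and $1,z$ is a basis of $F\vert L$, the equation $x*y=0$ is equivalent to the pair of $L$-linear conditions $T(xy)=0$ and $T((C_1y^{\s}+C_2\ov{y}^{\s})x)=0$. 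For fixed $y\neq 0$, the map $x\mapsto x*y$ is $L$-linear from $F$ to $F$; it is injective (hence bijective) precisely when these two conditions force $x=0$. So I would fix $y\neq 0$, parametrize the solutions of $T(xy)=0$ (a one-dimensional $L$-subspace of $F$, namely $x=c\,y^{-1}\ov{y}$ for $c\in L$ after using that $T(xy)=0$ means $xy\in L$, i.e.\ $xy=\ov{xy}$), substitute into the second condition, and see when it forces $c=0$.

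The key computation is then to reduce "$T((C_1y^{\s}+C_2\ov{y}^{\s})x)=0$ for all $x$ with $xy\in L$" to a single nonvanishing condition. Writing $xy=c\in L$, so $x=c/y$, the second equation becomes $c\cdot T\!\left(\frac{C_1y^{\s}+C_2\ov{y}^{\s}}{y}\right)=0$; hence nondegeneracy for this $y$ is equivalent to $T\!\left(\frac{C_1y^{\s}+C_2\ov{y}^{\s}}{y}\right)\neq 0$. Multiplying the argument by $\ov{y}\in F^{*}$ inside the trace does not change whether the trace vanishes, provided we keep track — more precisely, $T(w)=0 \iff T(w^{2^m})=0$ and $T(wy^{1+2^m}\cdot y^{-1-2^m})$... the cleanest route is: $T\big(\tfrac{C_1y^{\s}+C_2\ov y^{\s}}{y}\big)=0 \iff T\big(C_1 y^{\s}\ov y + C_2 \ov y^{\s}\ov y\big)=0$ after multiplying by the norm $y\ov y\in L^{*}$ and dividing back; since $\ov{y}^{\s}\ov{y}=(y\ov y)^{\s}\cdot\ov y/ y^{\s}$ is not quite in the right shape, I would instead apply the substitution $x\mapsto \ov x$ or replace $y$ by $\ov y$ to land on $T(C_1x\ov x^{\s}+C_2x^{\s+1})\neq 0$ for all $x\neq 0$. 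That gives the equivalence of the first two bullets. The main obstacle is precisely this bookkeeping: matching $T((C_1y^{\s}+C_2\ov y^{\s})x)$ after elimination of $x$ with the symmetric expression $T(C_1x\ov x^{\s}+C_2x^{\s+1})$, keeping careful track of conjugation by $2^m$ and of the fact that multiplying by norms in $L^{*}$ is harmless.

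For the equivalence of the second and third bullets, I would connect the quantity $T(C_1x\ov x^{\s}+C_2x^{\s+1})$ with the value of $P_{C_1,C_2,s}$. Note $P_{C_1,C_2,s}(X)=C_2X^{\s+1}+\ov{C_1}X^{\s}+C_1X+\ov{C_2}$, and if $X$ has norm $1$, say $X=\ov u/u$ for some $u\in F^{*}$ (every norm-$1$ element has this form, by Hilbert 90), then multiplying $P_{C_1,C_2,s}(X)$ by $u\ov u^{\s}=$ (something in $L$-related) and rearranging, $P_{C_1,C_2,s}(\ov u/u)$ is a scalar multiple of $C_2\ov u^{\s+1}+\ov C_1 \ov u^{\s}u+C_1 u^{\s}\ov u+\ov C_2 u^{\s+1}$, which is visibly of the form $w+\ov w$ with $w=C_1u^{\s}\ov u+C_2 u^{\s+1}$, i.e.\ equal to $T_{F\vert L}(w)=T(C_1u^{\s}\ov u+C_2u^{\s+1})$ up to the convention, with $\ov x=x^{2^m}$ and $\ov u^{\s}=(\ov u)^{\s}$. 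Hence $P_{C_1,C_2,s}(X)=0$ for some norm-$1$ element $X$ if and only if $T(C_1u^{\s}\ov u+C_2u^{\s+1})=0$ for some $u\neq 0$. Comparing with the second bullet (which I would phrase with the same $\s$-exponent convention) closes the cycle. Thus the whole proof is: unwind the presemifield condition to a trace nonvanishing via elimination, then identify that trace form with the polynomial's value on the norm-$1$ torus. I expect the exponent/conjugation bookkeeping in the elimination step to be the only real difficulty; everything else is a short direct verification.
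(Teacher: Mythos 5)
Your proposal is correct and follows essentially the same route as the paper's (very terse) proof: solve the imaginary part $T(xy)=0$ to reduce nondegeneracy to the nonvanishing of $T(C_1x\ov{x}^{\s}+C_2x^{\s+1})$ (multiplying by elements of $L^{*}$ inside the trace being harmless), then divide by a suitable $(\s+1)$-st power and invoke Hilbert 90 to identify this condition with $P_{C_1,C_2,s}$ having no root of norm $1$. The only defects are harmless conjugation slips (e.g.\ the solutions of $T(xy)=0$ are $x=c/y$, $c\in L$, not $cy^{-1}\ov{y}$; the clearing multiplier is $u^{\s+1}$, not $u\ov{u}^{\s}$; and your $w$ should be $C_1\ov{u}u^{\s}+C_2\ov{u}^{\s+1}$, i.e.\ the bullet-two form evaluated at $x=\ov{u}$), none of which affects the argument since the relevant vanishing sets coincide.
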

\begin{proof}
Assume $x*y=0$ for $xy\not=0.$ The imaginary part shows $y=e\ov{x}$ for some $e\in L.$
The real part shows $T(C_1x\ov{x}^{\s}+C_2x^{\s+1})=0.$
Write this out, divide by $\ov{x}^{\s+1}.$
\end{proof}

\begin{Corollary}
\label{p1nonzerocor}
If the conditions of the previous theorem are satisfied then $p_1\not=0.$
\end{Corollary}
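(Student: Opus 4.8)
The plan is to specialize the criterion of Theorem~\ref{multKnuthchar2theorem} to the simplest nonzero element $x=1$ (equivalently, to evaluate $P_{C_1,C_2,s}(X)$ at $X=1$). First I would recall, from Definition~\ref{FLcorresponddef}, that writing $C_1=(v_1,h_1)$ and $C_2=(v_2,h_2)$ one has $p_1=h_1+h_2$; and since $T(x)=Im(x)$ in the standard situation of Subsection~\ref{standardp=2sub}, this means precisely
\[
p_1=Im(C_1)+Im(C_2)=T(C_1)+T(C_2)=T(C_1+C_2).
\]

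Next I would plug $x=1$ into the quantity appearing in the second bullet of Theorem~\ref{multKnuthchar2theorem}. Since $1\in L$ we have $\ov{1}=1$ and $1^{\s}=1$, so $C_1\cdot 1\cdot\ov{1}^{\,\s}+C_2\cdot 1^{\s+1}=C_1+C_2$, whence $T(C_1\cdot 1\cdot\ov{1}^{\,\s}+C_2\cdot 1^{\s+1})=T(C_1+C_2)=p_1$. Because the conditions of Theorem~\ref{multKnuthchar2theorem} are assumed to hold, this value is nonzero for every $0\neq x\in F$, in particular for $x=1$; hence $p_1\neq 0$. One could argue identically from the third bullet: using $x+\ov{x}=Im(x)$ (immediate from $\ov{z}=z+1$), one gets $P_{C_1,C_2,s}(1)=(C_1+\ov{C_1})+(C_2+\ov{C_2})=h_1+h_2=p_1$, while $N(1)=1$, so $p_1=0$ would furnish a root of norm $1$, contradicting the theorem.

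There is essentially no obstacle here: the point is simply that the trivial test element $x=1$ already obstructs the presemifield property as soon as $p_1=0$. The only computations involved — $C_i+\ov{C_i}=h_i$, and the collapse of the ``$\s$-twisted'' terms at $1$ — are routine consequences of the normalizations of Subsection~\ref{standardp=2sub}, so the entire argument is one line.
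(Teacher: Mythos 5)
Your proof is correct and is essentially the paper's own argument: the paper's one-line proof ``Case $X=1$ shows $T(C_1)+T(C_2)=h_1+h_2=p_1\neq 0$'' is exactly your specialization of the criterion of Theorem~\ref{multKnuthchar2theorem} to $x=1$, using $T(C_i)=Im(C_i)=h_i$ and $p_1=h_1+h_2$. Your alternative via $P_{C_1,C_2,s}(1)$ and $N(1)=1$ is the same computation read off the third bullet instead of the second.
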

\begin{proof}
Case $X=1$ shows $T(C_1)+T(C_2)=h_1+h_2=p_1\not=0.$
\end{proof}

\begin{Theorem}
\label{multKnuthsecondchar2theorem}
The statements in Theorem~\ref{multKnuthchar2theorem}
 are also equivalent to $p_{s,t}(X)$ (see Definition \ref{Bfamilyintrodef})
having no root in $L.$
\end{Theorem}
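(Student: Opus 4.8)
The plan is to invoke the last item of Theorem~\ref{multKnuthchar2theorem} and reduce the claim to: \emph{$P_{C_1,C_2,s}(X)$ has a root of norm $1$ in $F$ if and only if $p_{s,t}(X)$ has a root in $L$}. The bridge is an explicit degree-one substitution that sends $L$ bijectively onto the set of norm-$1$ elements of $F$ other than $1$, and which transforms $P_{C_1,C_2,s}$ into $p_{s,t}$ up to an invertible factor; the leftover norm-$1$ element $X=1$ is governed by the constant $p_1$ and by Corollary~\ref{p1nonzerocor}.

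\emph{Step 1: parametrizing the norm-$1$ elements.} For $Y\in L$ set $w_Y=\frac{Y+z}{Y+\overline z}$. This is well defined because $Y+\overline z=(Y+1)+z\neq 0$. Using $N(a+bz)=a^2+ab+\mu b^2$ one gets $N(Y+z)=Y^2+Y+\mu=N(Y+\overline z)$, hence $N(w_Y)=1$; and $w_Y=w_{Y'}$ yields $(Y+Y')(z+\overline z)=0$, i.e.\ $Y=Y'$ since $z+\overline z=1$. As $|L|=2^m$ while $F$ has exactly $2^m+1$ elements of norm $1$, one of which is $1$ (not of the form $w_Y$), the map $Y\mapsto w_Y$ is a bijection from $L$ onto the norm-$1$ elements of $F$ distinct from $1$.

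\emph{Step 2: the key polynomial identity.} I would prove
\[
P_{C_1,C_2,s}\!\left(\tfrac{Y+z}{Y+\overline z}\right)(Y+\overline z)^{\s+1}=p_{s,t}(Y)
\]
by substituting $X=\frac{Y+z}{Y+\overline z}$ into $P_{C_1,C_2,s}(X)=C_2X^{\s+1}+\overline{C_1}X^{\s}+C_1X+\overline{C_2}$ and clearing the denominator, so the left-hand side becomes $C_2(Y+z)^{\s+1}+\overline{C_1}(Y+z)^{\s}(Y+\overline z)+C_1(Y+z)(Y+\overline z)^{\s}+\overline{C_2}(Y+\overline z)^{\s+1}$. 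Expanding each summand with $\overline z=z+1$, $z^{\s}=z+\mu_s$ (the Lemma of Subsection~\ref{standardp=2sub}) and $z^2=z+\mu$, one checks that only the monomials $Y^{\s+1},Y^{\s},Y,1$ occur; collecting the coefficient of each and splitting it into real and imaginary ($z$-)parts, the imaginary parts cancel and the real parts come out to be precisely $h_1+h_2$, $v_1+v_2+h_1+h_2$, $v_1+v_2+\mu_sh_1+(\mu_s+1)h_2$, $\mu_sv_1+(\mu_s+1)v_2+(\mu_s+\mu)h_1+(\mu_s+\mu+1)h_2$, i.e.\ the quantities $p_1,p_2,p_3,p_4$ of Definition~\ref{FLcorresponddef}. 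This eight-fold characteristic-$2$ bookkeeping (four coefficients, two components each) is the one genuinely laborious step, although the coefficients of $Y^{\s+1}$ and $Y^{\s}$ already collapse to $T(C_1)+T(C_2)=p_1$ and to $p_2$ after a single line.

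\emph{Step 3: conclusion.} If $p_{s,t}(Y_0)=0$ for some $Y_0\in L$, then by the identity $w_{Y_0}$ is a norm-$1$ root of $P_{C_1,C_2,s}$, so Theorem~\ref{multKnuthchar2theorem} fails; this is one implication. Conversely, assume Theorem~\ref{multKnuthchar2theorem} holds. Then $P_{C_1,C_2,s}$ has no norm-$1$ root, so in particular no root $w_{Y_0}$ with $Y_0\in L$, and the identity forces $p_{s,t}(Y_0)\neq 0$ for every $Y_0\in L$; the only remaining norm-$1$ element is $1$, and here $P_{C_1,C_2,s}(1)=T(C_1)+T(C_2)=p_1\neq 0$ by Corollary~\ref{p1nonzerocor}, so this case is consistent. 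Hence the three statements of Theorem~\ref{multKnuthchar2theorem} are equivalent to $p_{s,t}(X)$ having no root in $L$. The main obstacle is the computation in Step~2; the only point requiring care beyond it is keeping the degenerate value $X=1$ (equivalently $p_1=0$) aligned with Corollary~\ref{p1nonzerocor}, since $X=1$ is the single norm-$1$ element not captured by the substitution.
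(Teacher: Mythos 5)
Your route is genuinely different from the paper's. The paper works in $L$-coordinates with the product formula (\ref{circgeneralprodequ}) at $l=0$: the imaginary part forces $a=bc$ (after normalizing $d=1$), and the real part then factors as $b\,p_{s,t}(c)$, giving the dictionary between zero divisors and roots directly. You instead relate the two polynomials themselves, parametrizing the norm-$1$ circle minus $\{1\}$ by the M\"obius map $Y\mapsto (Y+z)/(Y+\overline z)$ and proving the identity $P_{C_1,C_2,s}\bigl(\tfrac{Y+z}{Y+\overline z}\bigr)(Y+\overline z)^{\s+1}=p_{s,t}(Y)$. I checked this identity: each coefficient is a trace ($T(C_1)+T(C_2)$, $T(z(C_1+C_2))$, $T(z^{\s}(C_2+\overline{C_1}))$, $T(C_2z^{\s+1})+T(C_1z\overline z^{\s})$), hence lies in $L$, and equals $p_1,p_2,p_3,p_4$ of Definition \ref{FLcorresponddef} respectively. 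This is a clean conceptual explanation of where that definition comes from, and Steps 1 and 2 are correct.

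The problem is Step 3. The two implications you state are the same implication: ``$p_{s,t}(Y_0)=0$ for some $Y_0\in L$ implies Theorem \ref{multKnuthchar2theorem} fails'' is exactly the contrapositive of ``Theorem \ref{multKnuthchar2theorem} holds implies $p_{s,t}$ has no root in $L$,'' which is what your ``conversely'' paragraph proves again. The direction you never establish is: $p_{s,t}$ has no root in $L$ $\Rightarrow$ $P_{C_1,C_2,s}$ has no root of norm $1$. For that direction the only issue is the leftover element $X=1$, where $P_{C_1,C_2,s}(1)=p_1$; but you dispose of it by citing Corollary \ref{p1nonzerocor}, whose hypothesis is precisely the conclusion you are trying to reach in this direction, so the appeal is circular. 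Nor can $p_1\neq 0$ be deduced from ``$p_{s,t}$ has no root in $L$'' alone: for instance $t=[0,1,1,p_4]$ with $p_4$ outside the image of the additive map $X\mapsto X^{\s}+X$ gives a rootless $p_{s,t}$ with $p_1=0$, and then $X=1$ is a norm-$1$ root of $P_{C_1,C_2,s}$. So this direction genuinely requires $p_1\neq0$ as an additional (implicit) hypothesis. To be fair, the paper's own proof has the same blind spot (its case $d=0$ ends with ``Then $p_1=0$, contradiction,'' with the contradiction resting on the same unstated assumption), but your write-up compounds it by mislabeling the logical structure so that one of the two directions is missing entirely. The fix is to state the equivalence as ``$P_{C_1,C_2,s}$ has a norm-$1$ root iff $p_1=0$ or $p_{s,t}$ has a root in $L$,'' and then either add $p_1\neq0$ to the hypotheses or note that the degenerate case $p_1=0$ is excluded throughout the paper.
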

\begin{proof}
Assume $x\circ y=x*\ov{y}=0, xy\not=0.$  Use the special case $l=0$ of (\ref{circgeneralprodequ}).
If $d=0$ the imaginary part shows $b=0,ac\not=0.$ Then $p_1=0,$ contradiction.
Let $d\not=0.$ By homogeneity it can be assumed $d=1$ and therefore $a=bc.$
Divide by $b.$
\end{proof}

\begin{Lemma}
\label{multKnuthchar2extralemma}
If the conditions of Theorem~\ref{multKnuthchar2theorem} are satisfied, then $N(C_1)\not=N(C_2).$
\end{Lemma}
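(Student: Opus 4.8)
The plan is to establish the contrapositive: assuming $N(C_1)=N(C_2)$, I would exhibit a root of $P_{C_1,C_2,s}(X)$ of norm $1$, which by Theorem~\ref{multKnuthchar2theorem} prevents $(F,*)$ from being a presemifield. First I would clear away the fully degenerate case: since the norm vanishes only at $0$, $N(C_1)=N(C_2)=0$ forces $C_1=C_2=0$, whence $P_{C_1,C_2,s}\equiv 0$ has roots of every norm (and this situation is in any case already excluded by Corollary~\ref{p1nonzerocor}). So one may assume $C_1,C_2\neq 0$.

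The candidate root is $r:=\overline{C_1}/C_2\in F^{*}$. Where does it come from? Substituting $X\mapsto X+\overline{C_1}/C_2$ into $P_{C_1,C_2,s}(X)=C_2X^{\sigma+1}+\overline{C_1}X^{\sigma}+C_1X+\overline{C_2}$ cancels the $X^{\sigma}$-term (in characteristic $2$, $(X+a)^{\sigma+1}+a(X+a)^{\sigma}=X^{\sigma+1}+a^{\sigma}X$) and yields $C_2\bigl(X^{\sigma+1}+AX+B\bigr)$ for a suitable $A\in F$ and constant term $B=(N(C_1)+N(C_2))/C_2^{2}$; thus $B=0$ precisely under our hypothesis, so $X=0$ is a root of the reduced polynomial and $r=\overline{C_1}/C_2$ is a root of $P_{C_1,C_2,s}$.

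I would then verify this directly, independently of the motivating computation. First, $N(r)=N(\overline{C_1})/N(C_2)=N(C_1)/N(C_2)=1$. Second,
$$P_{C_1,C_2,s}(r)=\frac{\overline{C_1}^{\,\sigma+1}}{C_2^{\sigma}}+\frac{\overline{C_1}^{\,\sigma+1}}{C_2^{\sigma}}+\frac{N(C_1)}{C_2}+\overline{C_2}=\frac{N(C_1)}{C_2}+\frac{N(C_2)}{C_2}=0,$$
where the first two terms cancel in characteristic $2$, $C_1\cdot\overline{C_1}=N(C_1)$, and $\overline{C_2}=N(C_2)/C_2$ (from $C_2\overline{C_2}=N(C_2)$); the last equality uses $N(C_1)=N(C_2)$. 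Hence $r$ is a norm-$1$ root of $P_{C_1,C_2,s}$, contradicting the hypothesis that the conditions of Theorem~\ref{multKnuthchar2theorem} hold. Therefore $N(C_1)\neq N(C_2)$.

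I do not foresee a real obstacle here; the only ``trick'' is spotting the root $\overline{C_1}/C_2$, which the translation above makes transparent. A heavier but more conceptual alternative would use the functional equation $X^{\sigma+1}P_{C_1,C_2,s}(1/X)=P_{\overline{C_1},\overline{C_2},s}(X)$ to show that the set of roots of $P_{C_1,C_2,s}$ in $F$ is stable under the involution $r\mapsto 1/\overline{r}$, whose fixed points are exactly the norm-$1$ roots, and then count roots in $F$ modulo $2$ via Bluher's theory; but this is unnecessary for the present statement.
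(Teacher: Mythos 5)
Your proof is correct and takes essentially the same approach as the paper: both argue the contrapositive by exhibiting an explicit norm-$1$ root of $P_{C_1,C_2,s}$ when $N(C_1)=N(C_2)$. The only difference is the choice of root — you use $\overline{C_1}/C_2$ (cancelling the two $\sigma$-degree terms), while the paper uses the $z$ with $z^{\sigma}=C_1/C_2$ (cancelling the terms of degree $\sigma+1$ and $1$); both verifications are valid, and yours has the minor advantage of avoiding the extraction of a $\sigma$-th root.
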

\begin{proof}
Assume $N(C_1)=N(C_2),$ equivalently $C_1=z_0C_2\not=0$ for $N(z_0)=1.$ Let $z$ be defined by
$z^{\s}=z_0.$ Then $P_{C_1,C_2,s}(z)=0.$
\end{proof}

\begin{Definition}
\label{legitimatedef}
Given $m,s$ we call a pair $(C_1,C_2)\in F^2$ {\bf legitimate} if the conditions of Theorem \ref{multKnuthchar2theorem}
are satisfied.
\end{Definition}

Observe that $(C_1,C_2)$ is legitimate if and only if $t(C_1,C_2)$ is legitimate, see Definition \ref{Bfamilyintrodef}.
We will see in Proposition \ref{Knuthl=0prop} that the semifields isotopic to the presemifields in Theorem \ref{multKnuthchar2theorem} are precisely those which are quadratic over right and left nucleus.

\section{The presemifield property}
\label{lformsection}

We consider the multiplication $x\circ y$ in $B(2,m,s,l,t),$ see
(\ref{circgeneralprodequ}) or (\ref{circFlevelprodequ}). Let $x*y=x\circ\ov{y}.$ Clearly

\begin{equation}
\label{multFlformchar2equ}
x*y=T((C_1y^{\s}+C_2\ov{y}^{\s})x)+lT((\ov{C_1}y+C_2\ov{y})x^{\s})
+T(xy)z.
\end{equation}

\begin{Theorem}
\label{lformchar2theorem}
$B(2,m,s,l,t)$ in Definition \ref{Bfamilyintrodef} is indeed a presemifield (of order $2^{2m}$).
\end{Theorem}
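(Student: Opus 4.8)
The plan is to show that the multiplication $x*y$ (equivalently $x\circ y$) in $B(2,m,s,l,t)$ has no zero divisors; since $F$ is finite and $+,*$ are bilinear, this is equivalent to $(F,+,*)$ being a presemifield. So suppose $x*y=0$ with $x\neq 0$; I want to conclude $y=0$. I will work with the form (\ref{multFlformchar2equ}), or pass through $x\circ y$ using (\ref{circgeneralprodequ}) at the level of $L$, whichever keeps the algebra cleanest. As in the proofs of Theorem \ref{multKnuthchar2theorem} and Theorem \ref{multKnuthsecondchar2theorem}, the imaginary part of $x*y=0$ reads $T(xy)=0$, i.e.\ $ad+bc=0$ in the $(a,b)(c,d)$ notation; so either $b=0$ or $y$ lies in a one-dimensional $L$-subspace determined by $x$ (namely $y$ is an $L$-multiple of $\overline{x}$ when $x\ne 0$). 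Then I feed this back into the real part and try to force $y=0$.

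The degenerate substructure $l=0$ is already handled by Theorem \ref{multKnuthsecondchar2theorem}, which says the real part cannot vanish on a nonzero admissible input precisely because $p_{s,t}(X)$ has no root in $L$; the new feature here is the extra term $lT((\ov{C_1}y+C_2\ov{y})x^{\sigma})$. So the key step is to understand what that term contributes once the imaginary-part constraint is imposed. Concretely, after normalizing (using homogeneity, e.g.\ $d=1$ in the case $d\ne 0$, giving $a=bc$, then dividing by $b$), the equation $x*y=0$ should reduce to a single equation in one variable over $L$ of the shape ``$p_{s,t}(\text{something}) = l\cdot(\text{a }(\sigma-1)\text{-power term})$'' — i.e.\ a modified projective-polynomial equation. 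The condition $l\in L^{*}\setminus(L^{*})^{\sigma-1}$ is exactly what is needed to rule out a solution: if such an equation had a root, one could rearrange it to exhibit $l$ as a $(\sigma-1)$-st power in $L^{*}$ (after dividing through by the appropriate conjugate), contradicting the hypothesis on $l$. I would also separately dispose of the boundary case $b=0$ (or $d=0$), where the imaginary-part argument degenerates; there Corollary \ref{p1nonzerocor} (namely $p_1\ne0$) should give the contradiction just as in Theorem \ref{multKnuthsecondchar2theorem}.

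The main obstacle I anticipate is the bookkeeping that turns $x*y=0$ into the clean one-variable form and correctly isolates the role of $l$: one must verify that after the substitution forced by $T(xy)=0$, the $l$-term and the $l=0$-term combine so that a hypothetical zero yields $l\in(L^{*})^{\sigma-1}$, rather than some messier obstruction mixing $p_{s,t}$ with $l$ in a way not directly contradicted by the hypotheses. In particular I will need to check that the ``something'' entering $p_{s,t}$ and the base of the $(\sigma-1)$-power are related by a single invertible substitution (again using $a=bc$), so that no root of $p_{s,t}$ in $L$ is needed — only the non-$(\sigma-1)$-power property of $l$. I expect the identities recorded just before Definition \ref{Bfamilyintrodef} (the conjugate formulas $x^{\sigma}=(a^{\sigma}+\mu_s b^{\sigma},b^{\sigma})$, $1/(a,b)=(1/D)(a+b,b)$, etc.) and Lemma \ref{fromptovlemma} to carry the computation; the rest is organizing the two cases ($d=0$ and $d\ne0$) and invoking the hypotheses on $t$ and on $l$ at the right moment.
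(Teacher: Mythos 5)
Your overall strategy coincides with the paper's: use the imaginary part of $x*y=0$ to force $y=e\ov{x}$ for some $e\in L$ (equivalently $a=bc$ after normalizing $d=1$), then show the real part cannot vanish; the reduction of the case $l=0$ to Theorem \ref{multKnuthsecondchar2theorem} and the treatment of the boundary case are also as in the paper. The gap is in the decisive step, which you misstate in a way that changes which hypotheses do the work. After the substitution the real part does not become an equation of the shape $p_{s,t}(\cdot)=l\cdot(\text{a }(\s-1)\text{-power})$; it \emph{factorizes} as a product. At the level of $F$, substituting $y=e\ov{x}$ into (\ref{multFlformchar2equ}) and using $T(u)=T(\ov{u})$ gives
$$Re(x*y)=(e^{\s}+le)\,T(C_1x\ov{x}^{\s}+C_2x^{\s+1}),$$
and at the level of $L$ (with $d=1$, $a=bc$) the real part of (\ref{circgeneralprodequ}) becomes $(b+lb^{\s})\,p_{s,t}(c)$. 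A hypothetical zero divisor therefore forces one of the two factors to vanish: the first vanishes only if $e=0$ (i.e.\ $y=0$) or $l=e^{\s-1}\in(L^{*})^{\s-1}$, which the hypothesis on $l$ excludes; the second factor is precisely the legitimacy condition on $t$, i.e.\ the statement that $p_{s,t}$ has no root in $L$ (equivalently the second condition of Theorem \ref{multKnuthchar2theorem}). So your assertion that ``no root of $p_{s,t}$ in $L$ is needed --- only the non-$(\s-1)$-power property of $l$'' is wrong: both hypotheses are used, one per factor. On the other hand, the ``messier obstruction mixing $p_{s,t}$ with $l$'' that you worry about does not occur: the $l$-term and the $l=0$-term share the factor $T(C_1x\ov{x}^{\s}+C_2x^{\s+1})$ (resp.\ $p_{s,t}(c)$) verbatim, so the two obstructions separate multiplicatively rather than additively. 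Once you carry out the computation you flagged as the main obstacle, this factorization is what you will find, and the proof closes in one line.
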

\begin{proof}
For $l=0$ this is Theorem \ref{multKnuthchar2theorem}. Let $l\not=0,$
assume $x*y=0,xy\not=0.$
The imaginary part shows $y=e\ov{x}$ for $e\in L.$
The real part factorizes:
$(e^{\s}+le)T(C_1x\ov{x}^{\s}+C_2x^{\s+1})=0.$ The first factor is nonzero by the condition on
$l,$ the non-vanishing of the trace term is the second condition of
Theorem \ref{multKnuthchar2theorem}.
\end{proof}

 Observe that the condition on $l$ in Definition \ref{Bfamilyintrodef} can be met for $l\not=0$ only if
$\gcd(m,s)\not=1.$ In particular $m$ has to be a composite number. The smallest choice is therefore
$m=4$ and the resulting semifields have order $2^8.$

\begin{Corollary}
\label{nofieldcor}
$B(2,m,s,l,t)$ where $l\not=0, s\notin\lbrace 0,m\rbrace$ is not isotopic to a field.
\end{Corollary}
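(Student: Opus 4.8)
The plan is to argue by contradiction: suppose $B(2,m,s,l,t)$ with $l\neq 0$ and $s\notin\{0,m\}$ is isotopic to a field. Since $F=GF(2^{2m})$ is the unique field of its order, an isotopy to a field would force the presemifield to be isotopic to $\ef_{2^{2m}}$ itself. The key structural obstruction is the size of the nuclei: all nuclei of a field of order $2^{2m}$ equal the whole field, hence have order $2^{2m}$, and nuclei are isotopy invariants. So it suffices to exhibit one nucleus of $B(2,m,s,l,t)$ that is strictly smaller than $F$. In fact, I would invoke (or set up the ingredients for) the nucleus computation announced in Section \ref{nucleisection}: for $l\neq 0$ the left and right nuclei have order $2^{\gcd(m,s)}$, and since $s\notin\{0,m\}$ and (by the remark following Theorem \ref{lformchar2theorem}) $\gcd(m,s)\neq 1$ with $\gcd(m,s)<m$, we get $2^{\gcd(m,s)}<2^{2m}$. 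This contradicts the isotopy invariance of nuclei, proving the corollary.

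Alternatively — and this is probably the cleaner route at this point in the paper, since the full nucleus computation comes later — I would argue directly from the multiplication formula (\ref{multFlformchar2equ}). If $(F,*)$ were isotopic to the field $F$, then after replacing $*$ by a suitable isotope one could assume $1$ acts as a two-sided identity and multiplication is commutative and associative; in particular the presemifield would be commutative up to isotopy. But the term $lT((\ov{C_1}y+C_2\ov{y})x^{\s})$ carries the Frobenius $\sigma$ applied to $x$ only, breaking the symmetry between the two arguments in a way that cannot be undone by linear isotopy when $\sigma\neq\mathrm{id}$ on $L$; more precisely, one checks that the left and middle nuclei cannot both be all of $F$, because membership of $x$ in the left nucleus would force $x^{\sigma}=x$ for a full-dimensional set of $x$, hence $\sigma=\mathrm{id}$, contradicting $s\notin\{0,m\}$.

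The main obstacle is making the second argument rigorous without circularity: one must extract, from the associativity law $x*(y*z)=(x*y)*z$ specialized appropriately, the constraint $x^{\sigma}=x$ — this requires a short but careful computation with (\ref{multFlformchar2equ}), isolating the $x^{\sigma}$-part via the trace form and using the non-degeneracy of $T$ together with legitimacy of $t$ (which guarantees $p_1\neq 0$ by Corollary \ref{p1nonzerocor}, so the relevant coefficients do not vanish). Given the logical order of the paper, I would in fact simply defer to the nucleus result of Section \ref{nucleisection}: once one knows the left nucleus has order $2^{\gcd(m,s)}\le 2^{m/2}<2^{2m}$, the corollary is immediate from isotopy invariance of nuclei, and no further calculation is needed here. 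The one-line proof is therefore: \emph{a field of order $2^{2m}$ has left nucleus of order $2^{2m}$, but the left nucleus of $B(2,m,s,l,t)$ has order $2^{\gcd(m,s)}<2^{2m}$ when $s\notin\{0,m\}$; since isotopic semifields have isomorphic nuclei, the two cannot be isotopic.}
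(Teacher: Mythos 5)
Your argument is valid but takes a genuinely different, and much heavier, route than the paper. You rule out isotopy to a field by combining the isotopy invariance of the nuclei with the computation of Section \ref{nucleisection} (Theorem \ref{p=2leftrightnucleitheorem}), which gives left and right nuclei of order $2^{\gcd(m,s)}<2^{2m}$. This is logically sound and not circular, since the nucleus theorems do not rely on Corollary \ref{nofieldcor}; but as a proof placed at this point in the paper it is a forward reference that defers essentially all of the work to one of the paper's main later computations. The paper's own proof is a self-contained one-liner of a quite different flavour: the set $L=\lbrace (a,0)\rbrace$ is closed under the product (the imaginary part $ad+bc$ vanishes there), and the restriction of the multiplication to $L$ is $x*y=(h_1+h_2)(xy^{\s}+lx^{\s}y)$ with $h_1+h_2=p_1\not=0$ by Corollary \ref{p1nonzerocor}; since $l\not=0$, $l\notin (L^{*})^{\s-1}$ and $\s$ is not the identity on $L$ (this is exactly the hypothesis $s\notin\lbrace 0,m\rbrace$), this restriction is a generalized twisted field not isotopic to a field, so the plane coordinatized by $B(2,m,s,l,t)$ contains a non-Desarguesian subplane and cannot be Desarguesian. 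That argument buys locality (it uses only what is already proved) at the cost of invoking Albert's classification of twisted fields, whereas yours buys conceptual transparency at the cost of a forward dependency. Your second, ``direct'' argument from (\ref{multFlformchar2equ}) is only a sketch, as you yourself concede, and is not needed once either of the other two is in place.
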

\begin{proof}
The restriction to $L$ is $x*y=(h_1+h_2)(xy^{\s}+lx^{\s}y).$
When $l\not=0$ and $\s$ is not the identity on $L,$ then
this is isotopic to a generalized twisted
field.
\end{proof}

\section{Isotopies}
\label{isotopysection}

In this section we study isotopies among the presemifields $B(2,m,s,l,t).$
The opposite of a presemifield $(F,*)$ is defined by $x\circ y=y*x.$
The proof of the following assertion is straightforward.

\begin{Proposition}
\label{Bp=2oppisoprop}
The opposite of $B(2,m,s,l,t(C_1,C_2)),l\not=0$
is isotopic to $B(2,m,s,1/l,t(\ov{C_1},\ov{C_2})).$
Here $t(\ov{C_1},\ov{C_2})=[p_1,p_2+p_1,p_3+p_1,p_4+p_1+p_2+p_3].$
\end{Proposition}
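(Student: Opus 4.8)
The plan is to work with the field-level description of the multiplication. Start from the formula
$$x*y=T((C_1y^{\s}+C_2\ov{y}^{\s})x)+lT((\ov{C_1}y+C_2\ov{y})x^{\s})+T(xy)z$$
of (\ref{multFlformchar2equ}), recalling that $x\circ y=x*\ov y$ is the multiplication of $B(2,m,s,l,t(C_1,C_2))$. The opposite presemifield has product $x\bullet y=y*x$; writing this out, I would get
$$y*x=T((C_1x^{\s}+C_2\ov{x}^{\s})y)+lT((\ov{C_1}x+C_2\ov{x})y^{\s})+T(xy)z.$$
The key algebraic tool is the $\ef_2$-bilinearity and symmetry of the trace form $T(uv)$ on $F$, together with the identity $T(u^{\s}v)=T(uv^{\s^{-1}})$ (apply $\s^{-1}$ inside the trace, which is $\s^{-1}$-invariant). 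I would use this to move the $\s$ off of $y$ and onto the coefficients in the first summand of $y*x$, and conversely in the second summand, so that $y*x$ acquires the shape ``$T(\text{(coeff)}\,y^{\s}) + l\,T(\text{(coeff)}\,y) + T(xy)z$'' — i.e. the same structural form as $x*y'$ for a new presemifield, after dividing through by $l$ to normalize the $l$-coefficient. Tracking how $x$ and $\ov x$ and their $\s$-images redistribute, one sees the roles of the first and second trace terms swap, which is exactly why the multiplier $l$ becomes $1/l$, and the conjugation $\ov{\phantom{x}}$ applied throughout (because $x\circ y$ vs.\ $x*y$ differ by a bar, and the opposite swaps the two arguments) is what turns $C_i$ into $\ov{C_i}$.

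After this manipulation I would read off that the opposite is isotopic (via the obvious bar-isotopy on one factor and a scalar on the output) to the presemifield $x*y = T((\ov{C_1}y^{\s}+\ov{C_2}\,\ov y^{\s})x) + (1/l)T((C_1 y + \ov{C_2}\ov y)x^{\s}) + T(xy)z$, which by Definition \ref{assproddef}/(\ref{multFlformchar2equ}) is $B(2,m,s,1/l,t(\ov{C_1},\ov{C_2}))$. The last clause of the statement, the explicit quadruple $t(\ov{C_1},\ov{C_2})=[p_1,p_2+p_1,p_3+p_1,p_4+p_1+p_2+p_3]$, I would verify purely mechanically: plug $\ov{C_1}=(v_1+h_1,h_1)$ and $\ov{C_2}=(v_2+h_2,h_2)$ (using $\ov{(v,h)}=(v+h,h)$ from Subsection \ref{standardp=2sub}) into the four defining expressions of Definition \ref{FLcorresponddef}, and simplify; the $h$-components are unchanged so $p_1=h_1+h_2$ stays, while the shifts $v_i\mapsto v_i+h_i$ feed linearly into $p_2,p_3,p_4$ and, after collecting terms and using $\mu_s+(\mu_s+1)=1$ etc., produce exactly the claimed additive corrections by $p_1$, $p_1$, and $p_1+p_2+p_3$.

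The main obstacle is bookkeeping rather than any genuine difficulty: keeping straight which trace term picks up which Frobenius twist and which conjugation as the two arguments are interchanged, so that the $l\leftrightarrow 1/l$ and $C_i\leftrightarrow\ov{C_i}$ correspondence falls out cleanly and one does not accidentally absorb a stray $\s$-power into the wrong slot. A useful sanity check is to specialize to $L$ (set $b=d=0$): there $x*y=p_1(xy^{\s}+lx^{\s}y)$ as in Corollary \ref{nofieldcor}, whose opposite $y*x=p_1(yx^{\s}+lx y^{\s})=p_1 l(xy^{\s}+l^{-1}x^{\s}y)$ is visibly isotopic to the $l\mapsto 1/l$ twisted field, confirming the multiplier inversion before one commits to the full $F$-level computation.
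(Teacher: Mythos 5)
Your endpoint is correct and your plan is essentially the direct computation that the paper omits as ``straightforward'': writing out $y*x=T((C_1x^{\s}+C_2\ov{x}^{\s})y)+lT((\ov{C_1}x+C_2\ov{x})y^{\s})+T(xy)z$, regrouping, and dividing the real (i.e.\ $L$-) part of the output by $l$ does yield $T((\ov{C_1}y^{\s}+\ov{C_2}\ov{y}^{\s})x)+(1/l)T((C_1y+\ov{C_2}\ov{y})x^{\s})+T(xy)z$, which is (\ref{multFlformchar2equ}) for the data $(\ov{C_1},\ov{C_2},1/l)$; the verification of the explicit quadruple via Definition \ref{FLcorresponddef} and your restriction-to-$L$ sanity check are both sound.

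The one step that would fail as you describe it is your ``key algebraic tool'' $T(u^{\s}v)=T(uv^{\s^{-1}})$. For the relative trace $T:F\la L$ this identity is false: $T$ commutes with Frobenius, so $T(u^{\s}v)=T(uv^{\s^{-1}})^{\s}$, and the two sides agree only when the trace value lies in the fixed field $K_1$ of $\s$. Fortunately no shuffling of $\s$ between the two factors is needed at all: after swapping the arguments, each trace term already carries $\s$ on the correct slot ($x^{\s}$ sits in the term without the factor $l$ and plain $x$ in the term with $l$, which is exactly why the multiplier inverts once you divide by $l$). What you actually need is the conjugation-invariance $T(\ov{u})=T(u)$, applied inside the two $C_2$-terms to normalize which of $x,\ov{x}$ (resp.\ $y,\ov{y}$) appears: $T(C_2\ov{x}^{\s}y)=T(\ov{C_2}x^{\s}\ov{y})$ and $T(C_2\ov{x}y^{\s})=T(\ov{C_2}x\ov{y}^{\s})$. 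This, rather than the passage between $\circ$ and $*$, is where the bars on the constants come from (note that the $l$-slot of (\ref{multFlformchar2equ}) already displays $\ov{C_1}$, which is why $C_1'=\ov{C_1}$ emerges without any manipulation). With that substitution your argument closes. As a side remark, carrying out the same computation with $\circ$ instead of $*$ lands on the quadruple $t(C_1,C_2)$ rather than $t(\ov{C_1},\ov{C_2})$; the two answers are reconciled by the linear isotopy of Proposition \ref{linearisoprop} with $\a=\b=\d=1$, $\g=0$, which realizes exactly the map $[p_1,p_2,p_3,p_4]\mapsto [p_1,p_2+p_1,p_3+p_1,p_4+p_1+p_2+p_3]$.
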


\begin{Proposition}
\label{lformisotopchar2prop}

\begin{itemize}
\item
$B(2,m,s,l,t), l\not=0$ is isotopic to $B(2,m,s,\l^{\s-1}l,t)$ for arbitrary $\l\in L^{*};$
\item $B(2,m,s,l,t)$ is isotopic to $B(2,m,s,l,\l t)$
for arbitrary $\l\in L^{*}$ ({\bf scalar isotopy});
\item
$B(2,m,s,l,[p_1,p_2,p_3,p_4])$ is isotopic to
$B(2,m,s,l^2,[p^2_1,p^2_2,p^2_3,p^2_4])$ ({\bf Galois isotopy});
 \item
 $B(2,m,s,l,[p_1,p_2,p_3,p_4])$ is isotopic to \\
 $B(2,m,s,l,[ k_1^{\s+1}p_1,k_1^{\s}k_2p_2,k_1k_2^{\s}p_3,k_2^{\s+1}p_4])$
for arbitrary $k_1,k_2\in L^{*}$ ({\bf diagonal isotopy}).
\end{itemize}
\end{Proposition}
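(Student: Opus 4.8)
The plan is to verify each of the four isotopies by exhibiting explicit linear maps $\a_1,\a_2,\b\in GL(2m,2)$ and checking the isotopy identity $\b(x\circ' y)=\a_1(x)\circ\a_2(y)$ (equivalently, working with the associated product $*$ via $x*y=x\circ\ov y$, whichever is more convenient), using the $F$-level formula~(\ref{multFlformchar2equ}) together with the correspondence between the quadruple $t$ and the pair $(C_1,C_2)$ from Definition~\ref{FLcorresponddef} and Lemma~\ref{fromptovlemma}. The key observation driving the first item is that in~(\ref{multFlformchar2equ}) the parameter $l$ multiplies only the term $T((\ov{C_1}y+C_2\ov y)x^{\s})$; so replacing $x$ by $\l x$ with $\l\in L^{*}$ rescales the first and third summands by $\l$ and the middle summand by $\l^{\s}=\l\cdot\l^{\s-1}$, hence after dividing out the common factor $\l$ (absorbed into $\b$) the effect is exactly $l\mapsto \l^{\s-1}l$, with $\a_2=\mathrm{id}$ and $\a_1,\b$ scalar multiplications by $\l$ and $1/\l$. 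One should also note $\l\in L^{*}$ commutes past $T$ and $z$ appropriately since $T$ is $L$-linear and the imaginary part $T(x y)$ scales by $\l$ too, matching $\b$.

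First I would treat scalar isotopy: replacing $t$ by $\l t$ replaces $(C_1,C_2)$ by $(\l C_1,\l C_2)$ by linearity of the correspondence in Lemma~\ref{fromptovlemma} (here $\l\in L^{*}$ so $\l=\ov\l$ and the formulas are $L$-linear), and in~(\ref{multFlformchar2equ}) this just multiplies the whole product by $\l$ on the real part and leaves the imaginary part to be absorbed, so $\b=\mathrm{id}$, $\a_1(x)=\l x$ on the real contribution — more precisely the cleanest choice is $\a_1=\mathrm{id}$, $\a_2=\mathrm{id}$, $\b=$ multiplication-type map; I would write out the one-line check. Galois isotopy follows from applying the Frobenius $x\mapsto x^2$ (an automorphism of $F$ fixing $L$ setwise, with $z\mapsto z^2$, so it is $L$-semilinear but still $\ef_2$-linear): squaring~(\ref{multFlformchar2equ}) and using that $T$ and $N$ commute with squaring, $z^2=z+\mu$, and $\s$ commutes with squaring, converts $(l,t)$ into $(l^2,t^{(2)})$ with all $p_i$ squared; the map $x\mapsto x^2$ is a bijective $\ef_2$-linear map on $F$, giving the required isotopy with $\a_1=\a_2=\b$ the (twisted) Frobenius. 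Diagonal isotopy is the substitution $x\mapsto k_2 x$... more precisely, I would try $\a_1(x)=k_1 x$ (as a map on the $L$-coordinates via some pairing) — the right bookkeeping is: $p_1$ pairs with $ac^{\s}$, $p_2$ with $bc^{\s}$ and $a^{\s}d$, $p_3$ with $ad^{\s}$ and $b^{\s}c$, $p_4$ with $bd^{\s}$, so applying $a\mapsto k_1 a, b\mapsto k_2 b$ to $x$ and $c\mapsto k_1 c, d\mapsto k_2 d$ to $y$ in~(\ref{circgeneralprodequ}) multiplies the $p_1$-term by $k_1^{\s+1}$, the $p_2$-term by $k_1^{\s}k_2$, the $p_3$-term by $k_1 k_2^{\s}$, the $p_4$-term by $k_2^{\s+1}$, while the imaginary part $ad+bc$ becomes $k_1k_2(ad+bc)$ which is absorbed into $\b$; this is most transparent done directly on~(\ref{circgeneralprodequ}) rather than on the $F$-level form.

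The only genuine subtlety — and the step I expect to cost the most care — is the consistency of each isotopy with the imaginary part and hence with the constant $\mu$ and the basis $\{1,z\}$: because $z$ is not scaled the same way as $1$, a scalar multiplication $x\mapsto \l x$ on the level of $F$ does \emph{not} act as $(a,b)\mapsto(\l a,\l b)$ on coordinates, so one must either work invariantly with~(\ref{multFlformchar2equ}) (where $z$ only appears in the term $T(xy)z$, easy to control) or carefully re-expand. For the diagonal and scalar isotopies it is cleanest to argue at the $L$-coordinate level using~(\ref{circgeneralprodequ}) where the imaginary part is simply $ad+bc$ and the scalings are visibly monomial; for the $l$-rescaling and Galois isotopies the $F$-level form~(\ref{multFlformchar2equ}) is cleaner. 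In every case the verification is a direct substitution, and once the correct $\a_1,\a_2,\b$ are identified the identity holds term by term, so no deeper obstacle arises; I would present each of the four as a short displayed computation.
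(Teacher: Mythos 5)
Your proposal is correct and follows essentially the same route as the paper: each isotopy is realized by an explicit substitution ($x\mapsto\l x$, $y\mapsto y$ for the $l$-rescaling; rescaling the $p_i$ for scalar isotopy; coordinate-wise squaring for Galois isotopy; and $a\mapsto k_1a$, $b\mapsto k_2b$, $c\mapsto k_1c$, $d\mapsto k_2d$ for diagonal isotopy), with the resulting common factors absorbed into $\b$. The one small correction is that for Galois isotopy the clean substitution is the coordinate-wise map $(a,b)\mapsto (a^2,b^2)$ applied via (\ref{circgeneralprodequ}) rather than the field Frobenius of $F$ (which, since $z^2=z+\mu$, neither fixes the basis $\lbrace 1,z\rbrace$ nor sends $t(C_1,C_2)$ to $[p_1^2,p_2^2,p_3^2,p_4^2]$); this is exactly the paper's ``apply the inverse of the Frobenius to $a,b,c,d$, then the Frobenius to the real and imaginary parts.''
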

\begin{proof}
For the first statement use the substitution $x\mapsto\l x, y\mapsto y.$
Scalar isotopy is obvious.
As for Galois isotopy, apply the inverse of the Frobenius map to $a,b,c,d,$ then apply
the Frobenius map to the real and to the imaginary part.
Diagonal isotopy follows from the substitution $a\mapsto k_1a, b\mapsto k_2b,
c\mapsto k_1c, d\mapsto k_2d.$
\end{proof}

Diagonal isotopy is a special case of linear isotopy, as follows.

\begin{Proposition}[{\bf Linear isotopy}]
\label{linearisoprop}
$B(2,m,s,l,[p_1,p_2,p_3,p_4])$ is isotopic to $B(2,m,s,l,[p'_1,p'_2,p'_3,p'_4])$ where
$$p'_1=\a^{\s+1}p_1+\a^{\s}\g p_2+\a \g^{\s}p_3+\g^{\s+1}p_4$$
$$p'_2=\a^{\s}\b p_1+\a^{\s}\d p_2+\b \g^{\s}p_3+\g^{\s}\d p_4$$
$$p'_3=\a\b^{\s}p_1+\b^{\s}\g p_2+\a \d^{\s}p_3+\g\d^{\s}p_4$$
$$p'_4=\b^{\s+1}p_1+\b^{\s}\d p_2+\b \d^{\s}p_3+\d^{\s+1}p_4$$
and $\a ,\b ,\g ,\d\in L$ such that $\a\d\not=\b\g .$
\end{Proposition}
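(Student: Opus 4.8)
The plan is to exhibit the isotopy explicitly as a change of variables on the ``$L$-level'' coordinates, in the same spirit as the proof of diagonal isotopy in Proposition \ref{lformisotopchar2prop}. Recall that an element $x=a+bz$ of $F$ is recorded by its real and imaginary parts $(a,b)\in L^2$, and that the multiplication $x\circ y$ in $B(2,m,s,l,t)$ is given by (\ref{circgeneralprodequ}) as a bilinear expression in $(a,b)$ and $(c,d)$. I would apply the substitution
\[
x\mapsto \a a+\g b + (\b a+\d b)z,\qquad y\mapsto \a c+\g d + (\b c+\d d)z,
\]
i.e. replace the coordinate pair $(a,b)$ by $(\a a+\g b,\ \b a+\d b)$ and $(c,d)$ by $(\a c+\g d,\ \b c+\d d)$, which is an element of $GL(2,L)$ acting on $F$ as an $\ef_2$-linear bijection precisely because $\a\d\neq\b\g$. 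This substitution is applied to both the left and the right argument (so $\a_1=\a_2$ in the isotopy definition, with $\b=\mathrm{id}$), which is legitimate since an isotopy only requires $\a_1,\a_2,\b\in GL(r,2)$.

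The key computational step is to substitute these expressions into (\ref{circgeneralprodequ}) and collect terms. Note that $(\a a+\g b)^{\s}=\a^{\s}a^{\s}+\g^{\s}b^{\s}$ and similarly for the other linear forms, since $\s=2^s$ is a field automorphism. Expanding $p_1(\a a+\g b)(\a c+\g d)^{\s}+lp_1(\a a+\g b)^{\s}(\a c+\g d)+\cdots$ and grouping by the four bilinear monomials $ac^{\s}+la^{\s}c$, $bc^{\s}+la^{\s}d$ (after matching imaginary parts), $ad^{\s}+lb^{\s}c$, $bd^{\s}+lb^{\s}d$ produces exactly the new coefficients
\[
p'_1=\a^{\s+1}p_1+\a^{\s}\g p_2+\a\g^{\s}p_3+\g^{\s+1}p_4,\ \dots,\ p'_4=\b^{\s+1}p_1+\b^{\s}\d p_2+\b\d^{\s}p_3+\d^{\s+1}p_4.
\]
One must also check that the imaginary part transforms consistently: the imaginary part of $x\circ y$ is $ad+bc$, and under the substitution $(a,b)\mapsto(\a a+\g b,\b a+\d b)$, $(c,d)\mapsto(\a c+\g d,\b c+\d d)$ one gets $(\a a+\g b)(\b c+\d d)+(\b a+\d b)(\a c+\g d)=(\a\d+\b\g)(ad+bc)=(\a\d+\b\g)$ times the old imaginary part in characteristic $2$; composing with the outer scalar isotopy $\b$ (or absorbing the constant $\a\d+\b\g\in L^*$ via the already-established scalar/diagonal isotopies) rescales everything uniformly and does not affect which $B(2,m,s,l,t')$ one lands in, since the $p'_i$ are only defined up to a common scalar by scalar isotopy.

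The main obstacle is purely bookkeeping: verifying that the cross terms in the expansion of the real part assemble into precisely the four stated formulas without leftover terms, using the fact that $\s$ distributes over the $L$-linear combinations and that the $l$-part $lp_i a^{\s}(\cdots)$ pairs up with the non-$l$ part $p_i(\cdots)a^{\s}$ into the same coefficient $p'_i$ — this is why the same $l$ appears on both sides (no power of $\a\d+\b\g$ intrudes on $l$, in contrast to the imaginary part). I would remark that this is a routine but lengthy calculation entirely parallel to Proposition \ref{circFlevelproductprop}, and that the diagonal isotopy of Proposition \ref{lformisotopchar2prop} is recovered by taking $\b=\g=0$, $\a=k_1$, $\d=k_2$.
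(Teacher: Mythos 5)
Your proposal is correct and follows essentially the same route as the paper, which proves the proposition in one line by exactly this kind of $GL(2,L)$ substitution applied simultaneously to $(a,b)$ and $(c,d)$ (the determinant factor $\a\d+\b\g$ on the imaginary part being absorbed by the output map, as you note). The only discrepancy is that your substitution $(a,b)\mapsto(\a a+\g b,\ \b a+\d b)$ is the transpose of the one that yields the displayed formulas verbatim --- the paper uses $a'=\a a+\b b$, $b'=\g a+\d b$, $c'=\a c+\b d$, $d'=\g c+\d d$, and with your choice the stated $p'_i$ come out with $\b$ and $\g$ interchanged, which is harmless since the statement ranges over all invertible matrices.
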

\begin{proof} This corresponds to the substitution $a'=\a a+\b b, b'=\g a+\d b, c'=\a c+\b d, d'=\g c+\d d,$
where $M=\left(\begin{array}{cc}
\a & \b \\
\g & \d \\
\end{array}\right)\in GL(2,L).$
 \end{proof}

 \begin{Corollary}
\label{secondcor}
$B(2,m,s,l,[p_1,p_2,p_3,p_4])$ is isotopic to
$B(2,m,s,l,[1,0,u,v])$ for suitable $v,$ where
$u\in\lbrace 0,1\rbrace .$
\end{Corollary}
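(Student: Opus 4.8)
The plan is to reduce the general quadruple $[p_1,p_2,p_3,p_4]$ to the claimed normal form by successively applying the isotopies already established, in particular Proposition~\ref{linearisoprop} (linear isotopy) and the scalar and diagonal isotopies of Proposition~\ref{lformisotopchar2prop}. By Corollary~\ref{p1nonzerocor} we may assume throughout that $p_1\not=0$, since legitimacy forces this.

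First I would kill the $X^{\s}$-coefficient $p_2$. The idea is to choose a linear substitution in Proposition~\ref{linearisoprop} that represents a translation $X\mapsto X+c$ on the underlying projective polynomial $p_{s,t}(X)=p_1X^{\s+1}+p_2X^{\s}+p_3X+p_4$; concretely one wants the matrix $M=\left(\begin{smallmatrix}\a&\b\\\g&\d\end{smallmatrix}\right)$ to act so that the new $p'_2$ vanishes. Since we are in characteristic $2$, the substitution $Y=X+c$ sends $p_1X^{\s+1}+p_2X^{\s}+p_3X+p_4$ to a polynomial whose $Y^{\s}$-coefficient is $p_2+p_1c$ (the $X^{\s}$ part of $p_1(Y+c)^{\s+1}=p_1(Y^{\s}+c^{\s})(Y+c)$ contributes $p_1cY^{\s}$, and $p_2(Y+c)^{\s}$ contributes $p_2Y^{\s}$), so taking $c=p_2/p_1$ makes it zero. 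I would verify that this translation is realized by a suitable choice of $\a,\b,\g,\d$ with $\a\d\not=\b\g$ in the linear-isotopy formulas — e.g. $\a=\d=1$, $\g=0$, $\b=c$ or the transpose, after checking which convention matches the formulas for $p'_1,\dots,p'_4$. After this step the quadruple has the form $[p_1,0,p_3',p_4']$ with $p_1\not=0$.

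Next I would use diagonal isotopy (or its special case within linear isotopy) with parameters $k_1,k_2\in L^{*}$: this sends $[p_1,0,p_3',p_4']$ to $[k_1^{\s+1}p_1,\,0,\,k_1k_2^{\s}p_3',\,k_2^{\s+1}p_4']$. Choosing $k_1$ with $k_1^{\s+1}=1/p_1$ — possible because the map $x\mapsto x^{\s+1}$ is surjective on $L^{*}$ when... here one must be slightly careful, but in fact one can instead combine with scalar isotopy $t\mapsto\l t$ to first scale the whole quadruple — normalizes the first entry to $1$. Then a further choice of $k_2$ (again possibly after a scalar isotopy to absorb norm obstructions) normalizes $p_4'$. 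The cleanest route is: first apply scalar isotopy to make $p_1=1$, then apply diagonal isotopy with $k_1^{\s+1}=1$ to keep $p_1=1$ while rescaling $p_3,p_4$; the subgroup of allowed $(k_1,k_2)$ acts transitively enough on the remaining two coordinates to reach $[1,0,u,v]$ with $u\in\{0,1\}$, the value of $u$ being a genuine invariant (whether $p_3$ is zero or not after normalization). I would present the $u=0$ and $u\not=0$ cases separately, rescaling by a diagonal matrix to send a nonzero $u$ to $1$.

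The main obstacle will be bookkeeping the surjectivity of the relevant power maps on $L^{*}$: the maps $x\mapsto x^{\s+1}$ and $x\mapsto x^{\s-1}$ are in general \emph{not} surjective (their images are the norm-type subgroups of index $\gcd(\s+1,2^m-1)$, resp. $\gcd(\s-1,2^m-1)$), so one cannot blindly assume any scalar can be hit. The fix is to exploit that scalar isotopy $t\mapsto\l t$ supplies arbitrary $\l\in L^{*}$ freely, and that $p_1\not=0$ together with the structure of the diagonal action means the \emph{pair} of operations (scalar $\times$ diagonal) does act transitively on the needed coordinates; I would check this index-counting carefully, or alternatively argue directly via Proposition~\ref{linearisoprop} with a well-chosen single matrix $M$ that simultaneously normalizes $p_1$ and clears $p_2$, leaving only the two-valued invariant $u$. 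That verification — that the combined group of isotopies reaches every quadruple in the orbit of some $[1,0,u,v]$ — is the real content; the rest is substitution into the formulas of Propositions~\ref{lformisotopchar2prop} and~\ref{linearisoprop}.
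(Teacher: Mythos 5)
Your approach is essentially the paper's: scalar isotopy to make $p_1=1$, an upper-triangular linear isotopy (your ``translation'' $X\mapsto X+c$; the paper takes $\a=1,\g=0,\b=p_2\d$ in Proposition~\ref{linearisoprop}) to kill $p_2$, and a diagonal isotopy to bring a nonzero $p_3$ to $1$. The surjectivity issue you flag as ``the real content'' in fact dissolves on inspection: the only coordinate that must be normalized by the diagonal action is $p_3$, which transforms as $p_3\mapsto k_1k_2^{\s}p_3$, and already with $k_1=1$ the map $k_2\mapsto k_2^{\s}$ is the Frobenius, hence bijective on $L^{*}$ --- no norm-type subgroup enters. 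The maps $x\mapsto x^{\s+1}$ and $x\mapsto x^{\s-1}$, whose non-surjectivity worries you, would only be relevant for normalizing $p_1$ or $p_4$ by the diagonal action, and neither is needed: $p_1$ is handled by scalar isotopy (an arbitrary factor $\l\in L^{*}$ is freely available there), and $p_4$ is not normalized at all, since the statement only asks for a ``suitable $v$.'' Your sentence about choosing $k_2$ to ``normalize $p_4'$'' is therefore a misreading of the claim, and attempting it would indeed meet a genuine obstruction --- which is precisely why the corollary leaves $v$ unspecified. With that one-line observation in place of the deferred ``index-counting,'' your argument is complete and coincides with the paper's proof.
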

\begin{proof}
As $p_1\not=0$ it follows from scalar isotopy that we may assume $p_1=1.$
Linear isotopy with $\a=1,\g=0,\b=p_2\d$ leads to a quadruple $[1,0,*].$
Assume this quadruple has $p_3\not=0.$ Application of linear isotopy to this
quadruple, with $\a =1,\b =\g =0$ yields the claim.
\end{proof}

The following special case of linear isotopy is interesting in its own right.

 \begin{Theorem}
\label{C10p=2addisotheorem}
$B(2,m,s,l,t(C_1,C_2))$ is isotopic to $B(2,m,s,l,t(\a\ov{\a}^{\s}C_1,\a^{\s+1}C_2))$
for all $0\not=\a\in F.$
\end{Theorem}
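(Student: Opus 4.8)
The plan is to exhibit the isotopy as a substitution of the same shape as in Proposition \ref{linearisoprop} (linear isotopy), but now carried out on the level of the larger field $F$ rather than on the coordinates $a,b,c,d$ over $L$. Concretely, I would start from the formula (\ref{circFlevelprodequ}) for $x\circ y$ in $B(2,m,s,l,t(C_1,C_2))$, namely
\[
x\circ y=T\bigl((C_1\ov{y}^{\s}+C_2y^{\s})x\bigr)+lT\bigl((C_2y+\ov{C_1}\ov{y})x^{\s}\bigr)+T(x\ov{y})z,
\]
and look for scalars to insert into $x$ and $y$ that have the effect of replacing $(C_1,C_2)$ by $(\a\ov{\a}^{\s}C_1,\a^{\s+1}C_2)$ while leaving $l$ and the imaginary-part term essentially intact (up to an allowed post-multiplication).

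First I would try the substitution $x\mapsto \a x$, $y\mapsto \ov{\a}^{\s}\cdot(\text{something})\,y$ — more precisely, I expect the right choice to be $x\mapsto \a x$ and $y\mapsto \b y$ for suitable $\b\in F^{*}$ (possibly $\b=\ov{\a}$ or a power thereof), followed by a left multiplication $w\mapsto \g w$ to repair the $z$-component; since $F^{*}$ acts on $F$ by multiplication as an element of $GL(2m,2)$, any such map is an admissible isotopy ingredient $\a_1,\a_2,\b$ in the sense of the Introduction. To pin down the scalars I would compute how each of the three terms transforms. The key algebraic facts I would lean on are: $T$ is $\ef_2$-linear and satisfies $T(uw)=T(w^{\s}u^{\s})$-type Frobenius invariances, $\ov{(\cdot)}$ commutes with $(\cdot)^{\s}$ and with $T$, and $\a^{\s+1}=\a\cdot\a^{\s}$, $\ov{\a}^{\s}\a=\,$norm-like quantities behave multiplicatively. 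Under $x\mapsto\a x$, $y\mapsto\ov\a y$ one gets in the first term $C_1\ov{(\ov\a y)}^{\s}(\a x)+C_2(\ov\a y)^{\s}(\a x)=\a\ov{\a}^{\s}\,C_1 y^{\s}\ldots$ — here one must be careful that $\ov{\ov\a y}=\a\ov y$ so the "$\ov y$" inside becomes a "$y$" and vice versa, which is exactly what is needed to land on the conjugated constants; I would track the bars carefully to confirm the first term becomes $T\bigl((\a\ov\a^{\s}C_1\,\ov{y'}^{\s}+\a^{\s+1}C_2\,y'^{\s})x'\bigr)$ after relabelling, possibly after also applying a scalar from $L^{*}$ (which is harmless by scalar isotopy, the second bullet of Proposition \ref{lformisotopchar2prop}).

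The second term, the $l$-term, transforms as $lT\bigl((C_2\ov\a y+\ov{C_1}\,\ov{\ov\a y})(\a x)^{\s}\bigr)=lT\bigl((C_2\ov\a y+\ov{C_1}\a\ov y)\a^{\s}x^{\s}\bigr)$, and I would check that the scalar $\a^{\s}\ov\a$ (respectively $\a^{\s+1}$) factors out so as to match $C_2\mapsto\a^{\s+1}C_2$, $\ov{C_1}\mapsto\ov{\a\ov\a^{\s}C_1}=\ov\a\a^{\s}\ov{C_1}$; the latter identity $\ov{\a\ov\a^{\s}C_1}=\a^{\s}\ov\a\,\ov{C_1}$ is the linchpin and uses only that conjugation is a field automorphism commuting with $\s$. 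The third term $T(x\ov y)z$ becomes $T(\a x\,\ov{\ov\a y})z=T(\a x\,\a\ov y)z$, i.e.\ it picks up a factor $\a\cdot\a=\a^2\in F$ in general — this is \emph{not} a scalar from $L$, so this is where a final left-multiplication $\b(\cdot)$ is needed to normalise the imaginary part, and one must verify that this single left-multiplication simultaneously rescales all three terms consistently. \textbf{I expect this last bookkeeping step — choosing $\b$ so that the $z$-component is repaired without spoiling the real part, and confirming that any leftover $L^{*}$-scalars are absorbed by scalar isotopy — to be the main obstacle}; everything before it is a mechanical (if bar-sensitive) expansion. If a single pair $(\a x,\ov\a y)$ plus one left-multiplication does not close up exactly, the fallback is to allow $y\mapsto\b y$ with $\b$ a separate parameter and solve the resulting system for $\b$ and the left-multiplier in terms of $\a$; the degrees of freedom clearly suffice, since Theorem \ref{C10p=2addisotheorem} is a strict specialisation of Proposition \ref{linearisoprop} and one only needs to exhibit \emph{one} admissible triple.
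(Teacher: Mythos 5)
Your overall strategy is the paper's: substitute scalar multiples into the $F$-level formula (\ref{circFlevelprodequ}) and repair the imaginary part afterwards. But the specific substitution you commit to, $x\mapsto\a x$, $y\mapsto\ov{\a}y$, is the wrong one, and since you never actually carry out the verification (you explicitly defer the "bookkeeping" as the main obstacle), the proposal does not close. Concretely, with $y\mapsto\ov{\a}y$ one has $\ov{\ov{\a}y}^{\s}=\a^{\s}\ov{y}^{\s}$ and $(\ov{\a}y)^{\s}=\ov{\a}^{\s}y^{\s}$, so the first term becomes $T((\a^{\s+1}C_1\ov{y}^{\s}+\a\ov{\a}^{\s}C_2y^{\s})x)$: the factors $\a^{\s+1}$ and $\a\ov{\a}^{\s}$ land on the \emph{wrong} constants relative to the claim ($C_1$ should acquire $\a\ov{\a}^{\s}$, not $\a^{\s+1}$), and the $l$-term then fails to match either assignment. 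Moreover, as you yourself note, the imaginary part becomes $T(\a^2x\ov{y})$, and the factor $\a^2\notin L$ cannot in general be absorbed by a post-composition $\b$, since $T(\a^2x\ov{y})$ is not a fixed $\ef_2$-linear function of $T(x\ov{y})$.

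The fix is simpler than your fallback suggests: take $y\mapsto\a y$ with the \emph{same} $\a$, i.e.\ $x\mapsto\a x$, $y\mapsto\a y$ (this is the paper's one-line proof). Then $\ov{\a y}^{\s}=\ov{\a}^{\s}\ov{y}^{\s}$ and $(\a y)^{\s}=\a^{\s}y^{\s}$, so the first term becomes exactly $T((\a\ov{\a}^{\s}C_1\ov{y}^{\s}+\a^{\s+1}C_2y^{\s})x)$; the second term becomes $lT((\a^{\s+1}C_2y+\a^{\s}\ov{\a}\,\ov{C_1}\ov{y})x^{\s})$, which is precisely $lT((C_2'y+\ov{C_1'}\ov{y})x^{\s})$ for $C_1'=\a\ov{\a}^{\s}C_1$, $C_2'=\a^{\s+1}C_2$, using your "linchpin" identity $\ov{\a\ov{\a}^{\s}C_1}=\ov{\a}\a^{\s}\ov{C_1}$; and the third term becomes $T(\a\ov{\a}x\ov{y})z=N(\a)T(x\ov{y})z$ with $N(\a)\in L^{*}$, so the only repair needed is the invertible $\ef_2$-linear map $\b:(u,v)\mapsto(u,N(\a)^{-1}v)$. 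In other words, with the correct choice of scalar the "main obstacle" you anticipate evaporates: the real part needs no correction at all and the imaginary part is off only by a scalar from $L^{*}$.
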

\begin{proof}
Use Equation (\ref{circFlevelprodequ}) and the substitutions
$x\mapsto\a x, y\mapsto\a y$ for an arbitrary nonzero $\a\in F.$
\end{proof}

\begin{Proposition}
\label{lformswitchCchar2prop}
 $B(2,m,s,l,t(C_1,C_2))$ is isotopic to $B(2,m,s+m,l,t(C_2,C_1)).$
\end{Proposition}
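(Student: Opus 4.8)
The plan is to exploit the very explicit description of the multiplication in Proposition \ref{circFlevelproductprop}, tracking how it transforms when $s$ is replaced by $s+m$. The key observation is that passing from $\s=2^s$ to $2^{s+m}$ corresponds, on the level of $F$, to composing the $\s$-power map with the nontrivial automorphism $x\mapsto\ov{x}=x^{2^m}$ of $F\vert L$; that is, $x^{2^{s+m}}=\ov{x^{\s}}=\ov{x}^{\s}$. Consequently $\mu_{s+m}=\mu_s+1$ (already noted in Subsection \ref{standardp=2sub}), and the $\s$-conjugation appearing in \eqref{circFlevelprodequ} gets systematically replaced by conjugation composed with bar. So the first step is to write down $x\circ y$ for the parameter $s+m$ and the pair $(C_2,C_1)$ directly from \eqref{circFlevelprodequ}, obtaining an expression of the form
\[
x\circ_{s+m} y=T\bigl((C_2\,\ov{y}^{2^{s+m}}+C_1\,y^{2^{s+m}})x\bigr)+lT\bigl((C_1 y+\ov{C_2}\,\ov{y})x^{2^{s+m}}\bigr)+T(x\ov{y})z.
\]

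Second, I would substitute $y^{2^{s+m}}=\ov{y}^{\,\s}$ and $\ov{y}^{2^{s+m}}=\ov{\ov{y}}^{\,\s}=y^{\s}$ (since bar is an involution), and likewise $x^{2^{s+m}}=\ov{x}^{\,\s}$. After this substitution the first trace term becomes $T((C_2 y^{\s}+C_1\ov{y}^{\,\s})x)$, which is exactly the first term of the multiplication of $B(2,m,s,l,t(C_1,C_2))$ with the roles bookkept correctly (note $C_1\ov{y}^{\,\s}+C_2 y^{\s}$ is symmetric in the pairing, matching \eqref{circFlevelprodequ} for $(C_1,C_2)$). The $l$-term becomes $lT((C_1 y+\ov{C_2}\,\ov{y})\ov{x}^{\,\s})$; here I would use that $T(w\ov{x}^{\,\s})=T(\ov{w}\,x^{\s})$ — because $T$ is invariant under bar and $\ov{\ov{x}^{\,\s}}=x^{\s}$ — to rewrite it as $lT((\ov{C_1}\,\ov{y}+C_2 y)x^{\s})=lT((C_2 y+\ov{C_1}\,\ov{y})x^{\s})$, which is precisely the $l$-term in \eqref{circFlevelprodequ} for the pair $(C_1,C_2)$. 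The final term $T(x\ov{y})z$ is already the imaginary part in \eqref{circFlevelprodequ} and needs no change. Thus, after possibly composing with the identity isotopy, $x\circ_{s+m}y$ for $(C_2,C_1)$ literally equals $x\circ_s y$ for $(C_1,C_2)$, giving the isotopy (in fact equality of presemifield multiplications).

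The main obstacle I anticipate is purely bookkeeping: making sure the swap $C_1\leftrightarrow C_2$ is paired correctly with the swap of the two arguments inside each trace, and that the bar-invariance identities $T(w)=T(\ov w)$ and $\ov{\ov{x}^{\,\s}}=x^{\s}$ are applied on the right terms. One should double-check against the $L$-level formula \eqref{circgeneralprodequ}: under $s\mapsto s+m$ one has $\mu_s\mapsto\mu_s+1$, and it must be verified that the coefficient quadruple $t(C_2,C_1)$ computed via Definition \ref{FLcorresponddef} with the shifted $\mu_{s+m}$ reproduces the same polynomial coefficients $p_1,p_2,p_3,p_4$ as $t(C_1,C_2)$ with $\mu_s$; this is the consistency check that the statement is correctly normalized. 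Alternatively, and perhaps more cleanly, one can avoid Definition \ref{FLcorresponddef} entirely and argue solely at the level of \eqref{circFlevelprodequ}, where the identity of the two multiplications is transparent once the substitutions above are in place. Either way, no genuine difficulty arises beyond careful substitution; the proof is, as the paper's style suggests elsewhere, "a direct calculation."
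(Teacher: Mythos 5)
Your argument is correct and is exactly the paper's (the paper's proof reads only ``this follows from basic properties of the trace''): the identities $y^{2^{s+m}}=\ov{y}^{\,\s}$, $\ov{y}^{\,2^{s+m}}=y^{\s}$ and $T(u)=T(\ov{u})$ applied to (\ref{circFlevelprodequ}) show the two multiplications literally coincide, and your consistency check that $t(C_2,C_1)$ computed with $\mu_{s+m}=\mu_s+1$ equals $t(C_1,C_2)$ computed with $\mu_s$ is the right normalization to verify. Nothing is missing.
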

\begin{proof} This follows from basic properties of the trace.
\end{proof}

Note that by Proposition \ref{lformswitchCchar2prop} we may assume $s\leq m.$
The following proposition shows that we may in fact assume $s\leq m/2.$

 \begin{Proposition}
\label{lformsecondisotopprop}
Let $s<m,\s =2^s,\t =2^{m-s}.$ Then  $B(2,m,s,l,[p_1,p_2,p_3,p_4]), l\not=0$ is isotopic to
$B(2,m,m-s,1/l,[p_1,p_3,p_2,p_4]).$
\end{Proposition}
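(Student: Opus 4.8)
The plan is to exhibit an explicit isotopy, realised on the level of the field $F$, that interchanges the roles of $\s$ and $\t=2^{m-s}$ and simultaneously inverts $l$. The natural candidate is to compose the substitutions $x\mapsto x^{\t}$ and $y\mapsto y^{\t}$ on the multiplication $x*y$ of $B(2,m,s,l,t)$ given in (\ref{multFlformchar2equ}), followed by applying the Frobenius power $2^{s}$ to the output; since $\s\t = 2^{m}$ acts on $L$ trivially and on $F$ as the conjugation $x\mapsto\ov{x}$, raising to $\t$ converts the exponent $\s$ into an exponent of the form $2^{-s}\equiv 2^{m-s}\pmod{2^{m}}$ on the relevant arguments, up to conjugation bars that will be absorbed into the constants $C_1,C_2$. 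Concretely, I would start from
$$x*y=T((C_1y^{\s}+C_2\ov{y}^{\s})x)+lT((\ov{C_1}y+C_2\ov{y})x^{\s})+T(xy)z,$$
substitute $x\mapsto x^{\t},\ y\mapsto y^{\t}$, and then apply $(\cdot)^{\s}$ to the whole expression (using $T(u^{\s})=T(u)$, $(u+vz)^{\s}=u^{\s}+v^{\s}z^{\s}$ and $z^{\s}=z+\mu_s$, so the imaginary part is preserved up to an additive real correction that is itself a trace term and can be reabsorbed).

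The key computational step is to track what happens to the two "off-diagonal" trace terms. Writing $y^{\t\s}=y^{2^m}=\ov{y}$ and $x^{\t\s}=\ov{x}$, the first term $T((C_1y^{\s}+C_2\ov{y}^{\s})x)$ becomes, after the substitution and after applying $\s$, a term of the shape $T((C_1^{\s}\ov{y}+C_2^{\s}y)x^{\s})$ — i.e. exactly the $l$-multiplied term of a new product with the middle exponent now $\s$ acting where $x^{\s}$ sits, which after the global relabelling $\s\leftrightarrow\t$ is the $\t$-exponent term; dually, the old $l$-term $lT((\ov{C_1}y+C_2\ov{y})x^{\s})$ turns into $l\cdot T((\cdot)y^{\t}+(\cdot)\ov{y}^{\t})x)$-type data. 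Pulling the scalar $l$ out through the Frobenius (it lies in $L$, so $l^{\s}$ appears, but a final scalar isotopy by $l^{-\s}\cdot$(something) fixes this) and comparing with the canonical form (\ref{multFlformchar2equ}) for the parameter $m-s$ shows that the new $l$ is $1/l$ and the new constants are obtained from $(C_1,C_2)$ by conjugation and Frobenius. Translating back to the $[p_1,p_2,p_3,p_4]$ description via Definition \ref{FLcorresponddef} and Lemma \ref{fromptovlemma}, and using the fact established in Section \ref{standardp=2sub} that $\mu_{m-s}=\mu_s+1+\mu_s\cdots$ — more precisely using $\mu_{s}+\mu_{m-s}=\mu_m=1$ after a Frobenius twist — the permutation of the $p_i$ works out to the transposition $p_2\leftrightarrow p_3$ claimed, with $p_1,p_4$ fixed.

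The main obstacle I expect is bookkeeping of the conjugation bars and the $\mu_s$-versus-$\mu_{m-s}$ correction terms: after raising to $\t$ the field element $z$ is sent to $z+\mu_{m-s}$ rather than to $z$, so the imaginary part $T(xy)z$ acquires a real additive error $\mu_{m-s}T(xy)$, which must be shown to be precisely the shift that reconciles the two real parts under the $p_2\leftrightarrow p_3$ swap; verifying that this error is exactly absorbed (rather than producing a genuinely different presemifield) is the delicate point, and it is here that the relation $\mu_s+\mu_{m-s}=1$ together with the symmetry of Definition \ref{FLcorresponddef} under $\mu_s\mapsto\mu_s+1$, $h_1\leftrightarrow h_2$, $v_1\leftrightarrow v_2$ does the work. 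Once the $F$-level identity is pinned down, the reduction to $s\le m/2$ is immediate since $\{s,m-s\}$ always contains an element $\le m/2$. Alternatively, if the direct Frobenius computation proves too unwieldy, one can instead first pass to the opposite presemifield via Proposition \ref{Bp=2oppisoprop} (which already inverts $l$ and conjugates the $C_i$) and then apply a single Frobenius power to repair the exponent; the two routes give the same conclusion and one may present whichever yields the cleaner constants.
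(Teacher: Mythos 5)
Your core mechanism --- precompose both arguments with the Frobenius $x\mapsto x^{\t}$ so that $\s\t=2^{m}$ collapses every exponent $\s$, then renormalize the output --- is exactly the idea of the paper's proof, and the reduction to $s\le m/2$ at the end is fine. But as written the argument has a genuine gap: the decisive computation is asserted, not performed. The claims that ``the permutation of the $p_i$ works out to the transposition $p_2\leftrightarrow p_3$'' and that the $\mu_s$-correction ``is precisely the shift that reconciles the two real parts'' are the entire content of the proposition, and the identity you lean on, $\mu_s+\mu_{m-s}=\mu_m=1$, is false as stated (for $s=m/2$ the left side is $0$); the correct relation is $\mu_s+\mu_{m-s}^{\s}=1$. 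Moreover, the global Frobenius you apply to the output is itself the source of the $z^{\s}=z+\mu_s$ nuisance: it adds $\mu_sT(xy)$ to the real part, and that term does not ``reconcile'' anything --- it is an extra output-side linear map $(u,v)\mapsto(u+\mu_sv,v)$ which must simply be recognized as a harmless further isotopy and stripped off. Finally, translating the transformed constants back through Definition \ref{FLcorresponddef} and Lemma \ref{fromptovlemma} is delicate precisely because the coefficients of that dictionary change from $\mu_s$ to $\mu_{m-s}$ when $s$ is replaced by $m-s$; this is the error-prone bookkeeping your proof would have to exhibit and does not.

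All of this evaporates if you work with the $L$-coordinate form (\ref{circgeneralprodequ}) instead of the $F$-level form, which is what the paper does. Substituting $a,b,c,d\mapsto a^{\t},b^{\t},c^{\t},d^{\t}$ turns each monomial $u^{\s}v$ into $uv^{\t}$ and each $uv^{\s}$ into $u^{\t}v$ (since $2^{s}2^{m-s}$ acts trivially on $L$), so the real part becomes $p_1a^{\t}c+lp_1ac^{\t}+p_2b^{\t}c+lp_2ad^{\t}+p_3a^{\t}d+lp_3bc^{\t}+p_4b^{\t}d+lp_4bd^{\t}$ and the imaginary part becomes $(ad+bc)^{\t}$. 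Dividing the real part by $l$ and applying $\s$ to the imaginary part (both legitimate output-side bijections) and comparing coefficients with the defining form of $B(2,m,m-s,l',[p_1',p_2',p_3',p_4'])$ immediately gives $l'=1/l$, $p_1'=p_1$, $p_2'=p_3$, $p_3'=p_2$, $p_4'=p_4$. That two-line coefficient comparison is the proof; I would either carry it out this way or, if you insist on the $F$-level route, actually compute the new pair $(C_1',C_2')$ and push it through the $\mu_{m-s}$-version of Definition \ref{FLcorresponddef} rather than appealing to an unverified symmetry.
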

\begin{proof}
Apply $\t$ to $a,b,c,d,$ then divide the real part by $l,$ apply $\s$ to the imaginary part.
\end{proof}

\section{The restricted isotopy group}
\label{restrisotopsection}

\begin{Definition}
Given $m$ and $s,$
the {\bf restricted isotopy group} is the direct product
$G_1=GL(2,L)\times L^{*}$
where $GL(2,L)$ and $L^{*}$ act on the legitimate pairs $(C_1,C_2)$
and on the legitimate quadruples $[p_1,p_2,p_3,p_4]$ by linear isotopy and
scalar isotopy, respectively.
\end{Definition}

Observe that $\vert G_1\vert =(q-1)(q^2-1)(q^2-q),$ where $q=2^m.$

\begin{Lemma}
\label {Omegasizelemma}
Let $\Omega =\Omega (m,s)$ be the set of legitimate quadruples, see Definition \ref{legitimatedef}. Then
$\vert\Omega \vert =\frac{(q+1)q(q-1)^22^d}{2(2^d+1)}.$
Here $d=\gcd(m,s).$
\end{Lemma}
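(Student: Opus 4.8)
The goal is to count the legitimate quadruples $\Omega=\Omega(m,s)$, i.e.\ the quadruples $t=[p_1,p_2,p_3,p_4]\in L^4$ for which $p_{s,t}(X)=p_1X^{\s+1}+p_2X^{\s}+p_3X+p_4$ has no root in $L$. By Corollary \ref{p1nonzerocor} every legitimate quadruple has $p_1\neq0$, so I would first normalize: every legitimate quadruple lies in the scalar-isotopy orbit (of size $q-1$) of a quadruple with $p_1=1$, and scalar isotopy preserves legitimacy. Hence $|\Omega|=(q-1)\cdot|\Omega_1|$, where $\Omega_1$ is the set of legitimate quadruples with $p_1=1$. So it suffices to show $|\Omega_1|=\frac{(q+1)q(q-1)2^d}{2(2^d+1)}$.

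\textbf{Counting monic projective polynomials with no root in $L$.} With $p_1=1$, write $P(X)=X^{\s+1}+p_2X^{\s}+p_3X+p_4$. The strategy is to count, over all $(p_2,p_3,p_4)\in L^3$ (a set of size $q^3$), those triples for which $P$ has at least one root in $L$, and subtract. A polynomial of this shape can be transformed to the standard Bluher form: if $p_2\ne 0$ one substitutes $X\mapsto X+p_2$ or rescales to reach $X^{\s+1}+aX+b$; more precisely, after the substitution $X\mapsto X+p_2$ the coefficient of $X^\s$ vanishes (in characteristic $2$, $(X+p_2)^{\s+1}+p_2(X+p_2)^\s=X^{\s+1}+p_2^\s X+\cdots$), reducing to $X^{\s+1}+AX+B$; then for $A\ne0$ a further scaling $X\mapsto A^{1/(\s+1)}X$ brings it to $c(X^{\s+1}+X+b)$ for a unique $b$. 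I would count roots in $L$ via the Bluher theory (reference \cite{Bluher}): for $\gcd(m,s)=d$, the number of $b\in L$ for which $X^{\s+1}+X+b$ has exactly $0$, $1$, $2$, or $2^d+1$ roots in $L$ is known explicitly in terms of $q=2^m$ and $2^d$. Translating those counts back through the (invertible, root-count-preserving) substitutions and keeping careful track of the degenerate strata ($p_2=0$, $A=0$, etc.), one assembles the number of triples $(p_2,p_3,p_4)$ giving no root in $L$.

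\textbf{Carrying out the bookkeeping.} Concretely, I expect the count of triples with a root in $L$ to be computed by: (i) for each $\xi\in L$, count triples for which $\xi$ is a root, which is a codimension-one linear condition, giving $q^2$ triples per $\xi$, hence $q^3$ ordered pairs (root, triple); (ii) correct for overcounting using the distribution of the number of $L$-roots, i.e.\ $\sum_{\text{triples}}\binom{\#\{\text{roots in }L\}}{1}=q^3$, so $|\{\text{triples with}\ge1\text{ root}\}| = q^3 - \sum_{j\ge 2}(j-1)N_j$ where $N_j$ is the number of triples with exactly $j$ roots in $L$; and $j\in\{2,2^d+1\}$ are the only possibilities $\ge 2$ by Bluher. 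Then $N_2$ and $N_{2^d+1}$ are read off from \cite{Bluher}: translating the Bluher counts (the number of $b$ with $2^d+1$ roots is $\tfrac{q-2^d}{2^d(2^{2d}-1)}$-type expression, etc.) through the parametrization by $(p_2,\text{scaling})$, which contributes factors of $q-1$ and $q$. Finally $|\Omega_1|=q^3-|\{\text{triples with}\ge1\text{ root}\}|$ and $|\Omega|=(q-1)|\Omega_1|$, which I would then simplify to $\frac{(q+1)q(q-1)^22^d}{2(2^d+1)}$.

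\textbf{Main obstacle.} The delicate part is not the idea but the precise synchronization of the reduction-to-standard-form with the Bluher root-count distribution, including the degenerate cases where the substitutions are not available (notably $p_2=0$ and the locus where the linear coefficient $A$ vanishes after translation, and the behaviour when $\gcd(m,s)$ forces extra collapses). One must verify that the substitutions used are bijections on the relevant parameter spaces and that they preserve the number of roots lying in $L$ (not merely the total number of roots), and then confirm that the resulting arithmetic collapses exactly to $\frac{(q+1)q(q-1)^22^d}{2(2^d+1)}$ — in particular that this is an integer, which forces the divisibility $2(2^d+1)\mid (q+1)q(q-1)^2 2^d$; since $2^d+1\mid 2^{2d}-1\mid q-1$ when $m/d$ is even (and $2^d+1 \mid q+1$ when $m/d$ is odd), this divisibility is automatic, providing a useful consistency check on the final formula.
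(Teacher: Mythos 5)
Your strategy is the same as the paper's --- normalize $p_1\neq 0$ (a factor of $q-1$), kill the $X^{\s}$ term by the translation $X\mapsto X+p_2$ (a factor of $q$), and then count the reduced polynomials $Y^{\s+1}+AY+B$ with no root in $L$ via Bluher's root-distribution results --- and the reduction you describe is sound in outline. But what you have written is a plan, not a proof: for a lemma whose entire content is an exact formula, the count itself is the proof, and you never perform it. You do not pin down which of Bluher's counts you are invoking (you write ``$\tfrac{q-2^d}{2^d(2^{2d}-1)}$-type expression''), you produce no intermediate closed form, and you never carry out the case split the argument forces. Concretely: the stratum $A=0$ contributes $q-1-(q-1)/g$ legitimate values of $B$, where $g=\gcd(q-1,\s+1)$ equals $1$ or $2^d+1$ according as $m/d$ is odd or even, while Bluher's count $N_0$ of $b\in L$ for which $X^{\s+1}+bX+b$ is rootless in $L$ is $2^{d-1}(q+1)/(2^d+1)$ resp.\ $2^{d-1}(q-1)/(2^d+1)$ in the two cases; one must then verify that $q(q-1)^2\bigl(1-1/g+N_0\bigr)$ collapses to the same expression $\frac{(q+1)q(q-1)^22^d}{2(2^d+1)}$ in both cases. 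That verification is the substance of the lemma and is absent from your write-up.

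There is also a concrete technical slip in the reduction. After translating, $Y^{\s+1}+AY+B$ has $A=p_2^{\s}+p_3$ and $B=p_4+p_2p_3$; to normalize the linear coefficient to $1$ you must scale by $\lambda$ with $\lambda^{\s}=A$, i.e.\ $\lambda=A^{1/\s}$, which always exists because $x\mapsto x^{\s}$ is an automorphism of $L$. Your proposed $X\mapsto A^{1/(\s+1)}X$ fails on two counts: a $(\s+1)$-st root of $A$ need not exist in $L$ (indeed $\gcd(\s+1,q-1)=2^d+1$ when $m/d$ is even), and even when it does the result is not of the form $c(X^{\s+1}+X+b)$. The cleaner route, which is what the paper does, is to note that for each fixed $A\neq 0$ the map $B\mapsto b=A^{\s+1}/B^{\s}$ is a bijection of $L^{*}$ under which $Y^{\s+1}+AY+B=\lambda^{\s+1}(Z^{\s+1}+bZ+b)$ with $\lambda=B/A$, so each of the $q-1$ nonzero values of $A$ contributes exactly $N_0$ rootless $B$'s. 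Your alternative bookkeeping via $\sum_j jN_j=q^3$ and the possible multiplicities $j\in\{2,2^d+1\}$ would also work, but it needs the values of $N_2$ and $N_{2^d+1}$ for the full three-parameter family including the $A=0$ stratum --- strictly more input than the single number $N_0$ --- so it does not simplify the task you have left undone.
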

\begin{proof}
Use a formula from Bluher theory \cite{Bluher}:  let $N_0$ be the number of elements $b\in L$ such that
$X^{\s+1}+bX+b$ has no zeroes in $L.$ Then
$N_0=2^{d-1}(2^m+1)/(2^d+1)$ provided $m/d$ is odd, and
$N_0=2^{d-1}(2^m-1)/(2^d+1)$ if $m/d$ is even. \\
Let also $g=\gcd(2^m-1,2^s+1)$ and observe that $g=1$ if $m/d$ odd whereas $g=2^d+1$ if $m/d$ is even.
Elementary counting shows
$$\vert\Omega\vert =(q-1)\lbrace q(q-1-(q-1)/g)+q(q-1)N_0\rbrace =q(q-1)^2(1-1/g+N_0) .$$
In both cases the same formula results.
\end{proof}

We will use the action of $G_1$ on the set $\Omega$ of legitimate quadruples and the fact that for
each $l$ where either $l=0$ or $l\not=0, l\notin (L^{*})^{\s-1}$ legitimate quadruples in the same
orbit under $G_1$ yield isotopic presemifields $B(2,m,s,l,t).$

\section{The degenerate case $l=0:$ Knuth semifields}
\label{Knuthsemisection}

\begin{Proposition}
\label{Knuthl=0prop}
The semifields isotopic to $B(2,m,s,0,t)$ where $s\notin\lbrace 0,m\rbrace$ are precisely those which are
quadratic over the left and the right nucleus (in characteristic $2$).
\end{Proposition}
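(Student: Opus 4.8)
The plan is to establish both inclusions. First I would show that every $B(2,m,s,0,t)$ with $s\notin\{0,m\}$ is quadratic over left and right nucleus, and then show conversely that every semifield which is quadratic over both these nuclei is isotopic to some $B(2,m,s,0,t)$. For the forward direction, recall from Definition \ref{assproddef} that the multiplication $x\circ y$ in $B(2,m,s,0,t)$ is the isotope $x*\ov{y}$ of the product $x*y=T((C_1y^{\s}+C_2\ov{y}^{\s})x)+T(xy)z$ associated to the projective polynomial $P_{C_1,C_2,s}(X)=C_2X^{\s+1}+\ov{C_1}X^{\s}+C_1X+\ov{C_2}$. I would identify the semifield $(F,*)$ (or a suitable isotope) with a Knuth semifield of the type described in Knuth \cite{KnuthJAlg}: these are the semifields of order $q^2$ built on a quadratic extension $F=L(z)$ whose multiplication has the shape $x*y = $ (an $L$-bilinear term) $+$ (a term twisted by the automorphism $\s$), precisely so that $L$ sits inside both the left and the right nucleus, making the semifield two-dimensional over each. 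Concretely one checks from the explicit formula that $L*x$ and $x*L$ behave associatively, i.e.\ $\ell\in L$ satisfies $\ell*(x*y)=(\ell*x)*y$ and $(x*y)*\ell = x*(y*\ell)$ for all $x,y$, using that $T$ is $L$-linear and that the $\s$-twisted part is compatible. Since $[F:L]=2$ and the nuclei cannot be all of $F$ (the multiplication is genuinely non-associative, as $p_{s,t}$ has no root in $L$ and $\s$ is nontrivial on $L$, cf.\ Corollary \ref{nofieldcor}), the left and right nuclei are exactly $L$, so the semifield is quadratic over both.

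For the converse inclusion, I would invoke Knuth's classification \cite{KnuthJAlg} of semifields that are quadratic over left and right nucleus: any such semifield of order $q^2$ with nucleus $L\cong\ef_q$ is isotopic to one whose multiplication, written on the quadratic extension $F=L(z)$ in the basis $1,z$, has a normal form involving a field automorphism $\s$ of $L$ and structure constants determined by a polynomial of the projective type $c_1X^{\s+1}+c_2X^{\s}+c_3X+c_4$ that must be root-free over $L$ for the semifield axiom to hold. Then I would match this normal form against Equation (\ref{circgeneralprodequ}) with $l=0$, i.e.\ $x\circ y=(p_1ac^{\s}+p_2bc^{\s}+p_3ad^{\s}+p_4bd^{\s},ad+bc)$: the imaginary part $ad+bc$ is the generic $L$-bilinear ``norm-form'' piece, and the real part is exactly a general $\s$-twisted $L$-bilinear form in $(a,b)$ and $(c^\s,d^\s)$ parametrised by the quadruple $[p_1,p_2,p_3,p_4]$. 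By Theorem \ref{multKnuthsecondchar2theorem} the presemifield property for $B(2,m,s,0,t)$ is equivalent to $p_{s,t}(X)$ having no root in $L$, which is precisely the root-freeness condition coming out of Knuth's normal form, so every quadruple arising from a Knuth semifield is legitimate and conversely. Applying the isotopies catalogued in Section \ref{isotopysection} (scalar, diagonal/linear isotopy of Propositions \ref{lformisotopchar2prop} and \ref{linearisoprop}, and the $s\mapsto s+m$ and $s\mapsto m-s$ moves of Propositions \ref{lformswitchCchar2prop} and \ref{lformsecondisotopprop}) shows that Knuth's several normal-form cases all collapse into the single family $\{B(2,m,s,0,t):s\notin\{0,m\},\ t\in\Omega(m,s)\}$ up to isotopy; the excluded values $s\in\{0,m\}$ correspond exactly to $\s$ acting trivially on $L$, which gives the field and is already handled separately.

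The main obstacle I expect is the bookkeeping in the converse direction: extracting Knuth's normal form from \cite{KnuthJAlg}, translating his structure constants into our parameters $[p_1,p_2,p_3,p_4]$ (or into the pair $(C_1,C_2)\in F^2$ via Definition \ref{FLcorresponddef}), and verifying that \emph{every} isotopy class of Knuth's type is realized — in particular handling the degenerate subcases of Knuth's construction (various specializations of his constants, and the distinction between the two Bluher-type cases $m/\gcd(m,s)$ even versus odd that already surfaced in Lemma \ref{Omegasizelemma}). The forward direction is a routine but slightly tedious associativity computation with the trace formula; the subtlety there is only to confirm that the nuclei are not larger than $L$, which follows from non-associativity plus the fact that $F$ has no intermediate subfield strictly between $L$ and $F$. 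I would organize the proof so that the forward direction is dispatched quickly by the explicit $L$-linearity of $T$, and devote the bulk of the argument to the dictionary between Knuth's parameters and ours, citing the isotopy propositions of Sections \ref{isotopysection}--\ref{restrisotopsection} to reduce the number of cases that need separate verification.
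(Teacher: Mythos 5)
Your overall strategy is the same as the paper's: reduce $t$ to the normal form $[1,0,p_3,p_4]$ with $p_3\in\{0,1\}$ (Corollary \ref{secondcor}) and then identify the result with Knuth's classification of semifields quadratic over left and right nucleus, which gives both inclusions at once. Two details in your write-up would not survive contact with the computation, though. First, your forward direction proposes to verify $\ell*(x*y)=(\ell*x)*y$ and $(x*y)*\ell=x*(y*\ell)$ directly on the trace-form product of Definition \ref{assproddef}; this fails as literally stated, because $x*\ell=\ell^{\s}T((C_1+C_2)x)+\ell T(x)z$ mixes $\ell$ and $\ell^{\s}$, so the copy of $L$ sitting inside $(F,*)$ is not in the nuclei of that particular isotope. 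The nuclei only become visible after the change of variables the paper performs: $b\mapsto b^{1/\s}$, $c\mapsto c^{1/\s}$, $d\mapsto d^{1/\s}$ followed by raising the imaginary part to the $\s$-th power, which turns the product into $(ac+p_4b^{1/\s}d,\,a^{\s}d+bc)$ (resp.\ with an extra $ad$ when $p_3=1$), and this is exactly Knuth's type IV standard form. Second, that substitution is Frobenius-semilinear in the coordinates and is \emph{not} among the isotopies catalogued in Section \ref{isotopysection} (which act $L$-linearly on $a,b,c,d$ and map the $B$-family to itself), so "applying the isotopies of Section \ref{isotopysection}" alone cannot bridge to Knuth's normal form; you need this extra move. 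Once you make that substitution explicit, your dictionary between the quadruples $[p_1,p_2,p_3,p_4]$ and Knuth's parameters (his cases $g=0$ and $g=1$ corresponding to $p_3=0$ and $p_3=1$ after the further shift $c\mapsto c+d$) is exactly the paper's argument, and Theorem \ref{multKnuthsecondchar2theorem} supplies the matching of the root-freeness conditions as you say.
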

\begin{proof} The condition on $s$ says  that $K_1$ ( the fixed field
of the automorphism associated to $\s$ in $L$) is properly contained in $L.$ It can be assumed that
$p_1=1,p_2=0,p_3\in\lbrace 0,1\rbrace .$
Start from (\ref{circgeneralprodequ}) for $l=0,$ and
apply the substitution
$b\mapsto b^{1/\s}, c\mapsto c^{1/\s}, d\mapsto d^{1/\s};$  then take the $\s -$th power of the
imaginary part. This leads to
$(a,b)*(c,d)=(ac+p_4b^{1/\s}d,a^{\s}d+bc)$ when $p_3=0,$ and to
$(a,b)*(c,d)=(ac+ad+p_4b^{1/\s}d,a^{\s}d+bc)$ in case $p_3=1.$
When $p_3=0$ this is the standard form given in Knuth \cite{KnuthJAlg}, Section 7.4, type IV, case $g=0.$
In case $p_3=1$ apply the additional substitution $c\mapsto c+d$ to obtain the standard form
in \cite{KnuthJAlg}, Section 7.4, type IV, case $g=1.$
\end{proof}

In the sequel we will always assume $l\not=0.$
We saw in Corollary \ref{nofieldcor} that the corresponding semifields are not fields.

\section{Case $m/\gcd(m,s)$ even: the C-family}
\label{Cfamilysection}
We refer to the semifields isotopic to the $B(2,m,s,l,[1,0,0,p_4]), l\not=0$
as the C-family of semifields.
Let $g=\gcd(q-1,\s +1)$ denote the number of cosets of $(L^{*})^{\s+1}$ in $L^{*}.$

\begin{Lemma}
\label{Cquadruplelemma}
If $m/\gcd(m,s)$ is odd, then there is no legitimate quadruple $[1,0,0,u].$
If $m/\gcd(m,s)$ is even, then there are $q-1-(q-1)/(2^d+1)=2^d(q-1)/(2^d+1)$ legitimate quadruples
$[1,0,0,u]$ where $d=\gcd(m,s).$
\end{Lemma}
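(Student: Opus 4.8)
The statement to prove is Lemma~\ref{Cquadruplelemma}, which counts the legitimate quadruples of the form $[1,0,0,u]$ depending on the parity of $m/\gcd(m,s)$.

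The plan is to translate the legitimacy condition directly into a condition on the projective polynomial $p_{s,t}(X)$ via Theorem~\ref{multKnuthsecondchar2theorem}. For $t=[1,0,0,u]$ we have $p_{s,t}(X)=X^{\s+1}+u$, so legitimacy means precisely that $X^{\s+1}=u$ has no solution in $L$. First I would note that $X\mapsto X^{\s+1}$ is the composition of the norm-type map and that its image in $L^*$ is exactly the subgroup $(L^*)^{\s+1}$ of index $g=\gcd(q-1,\s+1)$; also $u=0$ is never legitimate (consistent with Corollary~\ref{p1nonzerocor} applied after noting $p_4=0$ forces... actually more directly, $X=0$ is a root when $u=0$). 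So the number of legitimate $u$ equals the number of $u\in L^*$ lying outside $(L^*)^{\s+1}$, which is $(q-1)-(q-1)/g = (q-1)(1-1/g)$.

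The second step is to evaluate $g=\gcd(2^m-1,2^s+1)$ in terms of $d=\gcd(m,s)$ and the parity of $m/d$. This is the elementary number-theoretic fact already invoked in the proof of Lemma~\ref{Omegasizelemma}: $g=1$ when $m/d$ is odd, and $g=2^d+1$ when $m/d$ is even. When $g=1$, every element of $L^*$ is a $(\s+1)$-st power, so $X^{\s+1}=u$ always has a solution and there are no legitimate quadruples $[1,0,0,u]$ — this gives the first assertion. When $g=2^d+1$, the count becomes $(q-1)(1 - 1/(2^d+1)) = (q-1)\cdot 2^d/(2^d+1)$, which is the stated value; I would just record both forms $q-1-(q-1)/(2^d+1)$ and $2^d(q-1)/(2^d+1)$ as in the statement.

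The only genuinely non-routine ingredient is the gcd computation $\gcd(2^m-1,2^s+1)$, but since it is already cited from Bluher theory in Lemma~\ref{Omegasizelemma} I may simply invoke it; alternatively it follows from writing $\gcd(2^m-1,2^{2s}-1)=2^{\gcd(m,2s)}-1$ and comparing with $\gcd(2^m-1,2^s-1)=2^d-1$, splitting on whether $2$ divides $m/d$. Everything else is bookkeeping. Thus I expect no real obstacle; the proof is short.

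A clean write-up:

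\begin{proof}
By Theorem~\ref{multKnuthsecondchar2theorem} the quadruple $t=[1,0,0,u]$ is legitimate if and only if $p_{s,t}(X)=X^{\s+1}+u$ has no root in $L$, i.e.\ if and only if $u\notin\{\,x^{\s+1}\mid x\in L\,\}$. As $0^{\s+1}=0$, a legitimate $u$ must be nonzero, and the map $x\mapsto x^{\s+1}$ on $L^{*}$ has image the subgroup $(L^{*})^{\s+1}$ of index $g=\gcd(q-1,\s+1)$. Hence the number of legitimate quadruples $[1,0,0,u]$ equals $(q-1)-(q-1)/g=(q-1)(1-1/g)$.

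It remains to compute $g$. As used in the proof of Lemma~\ref{Omegasizelemma}, one has $g=1$ when $m/d$ is odd and $g=2^d+1$ when $m/d$ is even, where $d=\gcd(m,s)$. In the first case $(L^{*})^{\s+1}=L^{*}$, so every $u\in L^{*}$ is a $(\s+1)$-st power and there is no legitimate quadruple of this shape. In the second case the count is
$$(q-1)\left(1-\frac{1}{2^d+1}\right)=(q-1)-\frac{q-1}{2^d+1}=\frac{2^d(q-1)}{2^d+1},$$
as claimed.
\end{proof}
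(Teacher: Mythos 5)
Your proof is correct and follows essentially the same route as the paper: reduce legitimacy of $[1,0,0,u]$ to $u\notin (L^{*})^{\s+1}$, then count cosets using $g=\gcd(q-1,\s+1)$, which is $1$ or $2^d+1$ according to the parity of $m/d$ (a fact the paper also just invokes from the proof of Lemma~\ref{Omegasizelemma}). Your write-up merely makes explicit a couple of steps the paper leaves implicit, such as the exclusion of $u=0$.
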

\begin{proof}
The quadruple $[1,0,0,u]$ is legitimate if $u\notin  (L^{*})^{\s+1}.$
If $m/d$ is odd, then $g=1$ and $[1,0,0,u]$ is never legitimate.
If $m/d$ is even, then $g=2^d+1.$
\end{proof}

\begin{Lemma}
\label{Cfamilyp=2numberlemma}
Let $m/\gcd(m,s)$ be even. Then
$[1,0,0,u_1]$ and $[1,0,0,u_2]$ are in the same orbit under $G_1$ if and only if
either $u_2\in u_1( L^{*})^{\s+1}$ or  $u_2\in (1/ u_1)( L^{*})^{\s+1}.$
The stabilizer of $[1,0,0,u]$ under $G_1$ has order $(q-1)(2^d+1).$
\end{Lemma}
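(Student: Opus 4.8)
The plan is to work entirely with the diagonal and scalar isotopies described in Proposition~\ref{lformisotopchar2prop}, restricted to the special quadruples $[1,0,0,u]$, and then count. First I would determine, for fixed $l$ and the standing assumption $m/d$ even, exactly which group elements of $G_1=GL(2,L)\times L^{*}$ preserve the shape $[1,0,0,*]$. By Proposition~\ref{linearisoprop}, a matrix $M=\left(\begin{smallmatrix}\a&\b\\\g&\d\end{smallmatrix}\right)\in GL(2,L)$ sends $[1,0,0,p_4]$ to a quadruple whose entries are $p'_1=\a^{\s+1}+\g^{\s+1}p_4$, $p'_2=\a^{\s}\b+\g^{\s}\d p_4$, $p'_3=\a\b^{\s}+\g\d^{\s}p_4$, $p'_4=\b^{\s+1}+\d^{\s+1}p_4$; combined with a scalar $\l\in L^{*}$ this rescales the whole quadruple. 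The requirement for the image to again be of the form $[\text{nonzero},0,0,*]$ forces the two middle coordinates to vanish, i.e.
$$\a^{\s}\b+\g^{\s}\d p_4=0,\qquad \a\b^{\s}+\g\d^{\s}p_4=0.$$
I would analyze these two equations: if $\b=0$ then the first forces $\g^{\s}\d p_4=0$, so $\g=0$ (since $\d\ne0$ and $p_4\ne0$), giving the diagonal subgroup; symmetrically if $\g=0$ then $\b=0$. If both $\b,\g\ne0$ then dividing one equation by the other gives $(\a/\g)^{\s-1}=(\d/\b)^{\s-1}$ after clearing, hence $\a/\g$ and $\d/\b$ differ by a $(\s-1)$-th root of unity, i.e. an element of $K_1^{*}$ — this is the "anti-diagonal" family that realises the $u\mapsto 1/u$ symmetry. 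The upshot is that the subset of $G_1$ stabilising the shape $[1,0,0,*]$ (as a shape, not pointwise) is generated by diagonal matrices, anti-diagonal matrices, and scalars, and its action on the last coordinate is: diagonal $(\a,\d)$ together with scalar $\l$ sends $u\mapsto \l^{-1}\cdot(\l^{\s+1})\cdot$(wait — more carefully) $u=p_4\mapsto \d^{\s+1}p_4/\a^{\s+1}$ after renormalising $p'_1$ back to $1$, i.e. $u\mapsto (\d/\a)^{\s+1}u$; and the anti-diagonal part contributes the extra $u\mapsto (\text{something})^{\s+1}/u$. This proves the orbit description: $[1,0,0,u_1]\sim[1,0,0,u_2]$ iff $u_2\in u_1(L^{*})^{\s+1}$ or $u_2\in u_1^{-1}(L^{*})^{\s+1}$.

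For the stabiliser order I would use the orbit–stabiliser theorem inside this picture. First count the orbit of $[1,0,0,u]$ among the legitimate quadruples $[1,0,0,*]$: by Lemma~\ref{Cquadruplelemma} there are $2^d(q-1)/(2^d+1)$ legitimate such quadruples, and they fall into cosets of $(L^{*})^{\s+1}$, of which there are $g-1=2^d$ that are legitimate (the trivial coset is excluded since $[1,0,0,1]$ has the root $X=1$), each coset of size $(q-1)/(2^d+1)$. The pairing $u\leftrightarrow u^{-1}$ among these $2^d$ legitimate cosets either is free (orbits of size $2\cdot(q-1)/(2^d+1)$) or fixes a coset ($u\equiv u^{-1}$, orbit of size $(q-1)/(2^d+1)$); I would need to check that generically the orbit has the "expected" size so that $|G_1|/|\mathrm{Stab}| = |G_1\text{-orbit}|$ with $|G_1\text{-orbit}|$ equal to the number of legitimate quadruples $G_1$-equivalent to $[1,0,0,u]$ — but here is a subtlety: the $G_1$-orbit of $[1,0,0,u]$ in $\Omega$ contains quadruples \emph{not} of the form $[1,0,0,*]$, since a generic $M\in GL(2,L)$ moves it off the shape. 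So I must count the full $G_1$-orbit. The cleanest route: a quadruple $t(C_1,C_2)$ corresponds (Definition~\ref{FLcorresponddef}, Lemma~\ref{fromptovlemma}) to a pair $(C_1,C_2)\in F^2$, and by Theorem~\ref{C10p=2addisotheorem} the pair $(C_1,C_2)$ is moved to $(\a\ov\a^{\s}C_1,\ \a^{\s+1}C_2)$ by $\a\in F^{*}$; together with scalar isotopy $(C_1,C_2)\mapsto(\l C_1,\l C_2)$ this gives an action of $F^{*}\times L^{*}$, and the $[1,0,0,u]$ shape corresponds to a specific locus of pairs $(C_1,C_2)$ — I would identify that locus from Lemma~\ref{fromptovlemma}, compute its size, and compute the size of the full $G_1$-orbit through it, then divide $|G_1|=(q-1)(q^2-1)(q^2-q)$ by the orbit size to read off $|\mathrm{Stab}|=(q-1)(2^d+1)$.

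The main obstacle I anticipate is the bookkeeping in the second half: correctly identifying the full $G_1$-orbit size of $[1,0,0,u]$ in $\Omega$ (not just the sub-orbit of $[1,0,0,*]$-shaped quadruples), and being careful that the anti-diagonal stabiliser elements exist precisely because $m/d$ is even (so that $(L^{*})^{\s-1}=K_1^{*}$ has the right index and the relevant norm-one conditions are solvable). A safe way to pin down $|\mathrm{Stab}|$ without the orbit computation is direct enumeration of the stabilising subgroup of $G_1$: parametrise diagonal matrices by $(\a,\d)\in(L^{*})^2$ modulo the scalar $\l\in L^{*}$ acting compatibly, impose $p'_1=1$ and $p'_4=u$, i.e. $(\d/\a)^{\s+1}=1$, giving $\gcd(q-1,\s+1)=2^d+1$ choices for $\d/\a$, and then the remaining freedom in $\a$ together with $\l$ contributes a factor $q-1$; the anti-diagonal coset either doubles this (if $u\equiv u^{-1}$) or not, and one checks the count is $(q-1)(2^d+1)$ uniformly by verifying consistency with $\sum_{\text{orbits}}|\mathrm{orbit}|$ equalling $|\Omega|$ from Lemma~\ref{Omegasizelemma}. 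I would present the argument via this direct subgroup count, using the orbit/coset cross-check only as a sanity verification, since the direct count keeps all the constants visible and avoids the delicate question of which cosets are self-inverse.
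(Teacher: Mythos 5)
Your setup is right and matches the paper's: restrict linear isotopy (Proposition \ref{linearisoprop}) to quadruples $[1,0,0,*]$, read off the vanishing conditions $\a^{\s}\b=\g^{\s}\d p_4$ and $\a\b^{\s}=\g\d^{\s}p_4$, and observe that $\b=0\Leftrightarrow\g=0$ gives the diagonal case ($u\mapsto(\d/\a)^{\s+1}u$) while $\a=0\Leftrightarrow\d=0$ gives the anti-diagonal case ($u\mapsto(\b/\g)^{\s+1}/u$). But there is a genuine gap where you treat ``both $\b,\g\not=0$'' as if it were the anti-diagonal family: an anti-diagonal matrix has $\a=\d=0$, and the case you have actually left open is the one with \emph{all four} entries nonzero. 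Your relation $(\a\d/\b\g)^{\s-1}=1$ is not a contradiction and does not force the matrix into either of the two good families; if such matrices existed they could identify $u_1$ with elements of further cosets (breaking the ``only if'' direction) and could enlarge the stabilizer beyond $(q-1)(2^d+1)$. The paper closes this case by normalizing $\a=1$, deducing $\b=u_1\g^{\s}\d$ and $\b^{\s}=u_1\g\d^{\s}$, comparing to get $u_1\g^{\s+1}\in K_1$, and then using that $K_1^{*}\subseteq(L^{*})^{\s+1}$ when $m/\gcd(m,s)$ is even to reach the contradiction $u_1\in(L^{*})^{\s+1}$ with legitimacy. This use of the legitimacy of $u_1$ is the one indispensable idea your proposal is missing.

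A secondary, smaller gap: for the stabilizer order you leave open whether the anti-diagonal coset contributes (the case $u\equiv u^{-1}$ modulo $(L^{*})^{\s+1}$), proposing to settle it by checking $\sum|\mathrm{orbit}|=|\Omega|$. That check is essentially what Theorem \ref{mp=2devencounttheorem} later derives \emph{from} this lemma, so it cannot serve as the justification here. The direct argument is short: the quotient $L^{*}/(L^{*})^{\s+1}$ has odd order $2^d+1$, so $u^2\in(L^{*})^{\s+1}$ forces $u\in(L^{*})^{\s+1}$, contradicting legitimacy; hence no anti-diagonal element stabilizes $[1,0,0,u]$ and the stabilizer is exactly the diagonal-plus-scalar group of order $(q-1)(2^d+1)$, as you computed.
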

\begin{proof}
We have $g=2^d+1$ and $K_1=\ef_{2^d}\subseteq (L^{*})^{\s+1}.$
Let the matrix $M$ map $[1:0:0:u_1]\mapsto [1:0:0:u_2].$ We have three conditions:
$$\a^{\s}\b =u_1\g^{\s}\d , \a\b^{\s}=u_1\g\d^{\s} , \b^{\s +1}+u_1\d^{\s +1}=u_2(\a^{\s +1}+u_1\g^{\s +1}).$$
We have $\b =0$ if and only if $\g =0$ which leads to $u_2$ and $u_1$ in the same coset.
Also $\a =0$ iff $\d =0$ and this leads to $u_2$ in the same coset as $1/u_1.$ Assume all entries of
$M$ are nonzero. By homogeneity it can be assumed that $\a =1.$ The first two equations show
$\b =u_1\g^{\s}\d , \b^{\s}=u_1\g\d^{\s} .$ Comparison shows $c=u_1\g^{\s +1}\in K_1.$ This yields the
contradiction $u_1\in (L^{*})^{\s+1}.$
\end{proof}

\begin{Theorem}
\label{mp=2devencounttheorem}
Given $m,s$ and $l\not=0$ such that $m/\gcd(m,s)$ is even, all members of $B(2,m,s,l,t)$ belong to the C-family.
There are $2^{d-1}$ orbits under $G_1,$ where $d=\gcd(m,s).$
\end{Theorem}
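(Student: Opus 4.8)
The plan is to reduce an arbitrary legitimate quadruple to the normal form $[1,0,0,u]$ using the isotopies available in the restricted isotopy group $G_1$, thereby showing every member of $B(2,m,s,l,t)$ lies in the C-family, and then to count the $G_1$-orbits of such quadruples using Lemma \ref{Cfamilyp=2numberlemma}. First I would invoke Corollary \ref{secondcor}: every legitimate $[p_1,p_2,p_3,p_4]$ is $G_1$-equivalent (in fact by linear and scalar isotopy) to $[1,0,u,v]$ with $u\in\{0,1\}$. The crux is to rule out $u=1$ when $m/\gcd(m,s)$ is even, i.e.\ to show $[1,0,1,v]$ is never legitimate in that case, or alternatively that it is always $G_1$-equivalent to some $[1,0,0,v']$. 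The natural tool is Lemma \ref{Cquadruplelemma} together with the counting in Lemma \ref{Omegasizelemma}: since $m/d$ is even we have $g=\gcd(2^m-1,2^s+1)=2^d+1$, and a direct examination of $p_{s,t}(X)=X^{\s+1}+X+v$ (the polynomial attached to $[1,0,1,v]$) should show it always has a root in $L$ — this is exactly a Bluher-type statement, since $X^{\s+1}+X+v$ is, after the usual substitution, in the family $X^{\s+1}+bX+b$, and in the $m/d$ even regime such trinomials with the relevant parameter always split off a root. Thus the only legitimate quadruples of the form $[1,0,u,v]$ have $u=0$, proving the first assertion.

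Next I would count orbits. By the first part, every $G_1$-orbit on $\Omega$ contains a quadruple $[1,0,0,u]$ with $u\notin(L^{*})^{\s+1}$, and by Lemma \ref{Cfamilyp=2numberlemma} two such quadruples $[1,0,0,u_1]$, $[1,0,0,u_2]$ are $G_1$-equivalent precisely when $u_2\in u_1(L^{*})^{\s+1}$ or $u_2\in(1/u_1)(L^{*})^{\s+1}$. So the number of orbits equals the number of pairs $\{C, C^{-1}\}$ of cosets of $(L^{*})^{\s+1}$ in $L^{*}$ that are distinct from the trivial coset $(L^{*})^{\s+1}$ itself. There are $g=2^d+1$ cosets in total, hence $2^d$ nontrivial ones; the coset-inversion involution acts on these $2^d$ cosets, and I must check it is fixed-point-free on them — a fixed coset would mean $C^2=(L^{*})^{\s+1}$, i.e.\ $C$ has order dividing $2$ in the cyclic group $L^{*}/(L^{*})^{\s+1}$ of odd order $2^d+1$, forcing $C$ trivial, a contradiction. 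Therefore the involution pairs up the $2^d$ nontrivial cosets into $2^{d-1}$ pairs, giving exactly $2^{d-1}$ orbits under $G_1$.

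The main obstacle I anticipate is the first part — establishing that $[1,0,1,v]$ is never legitimate when $m/d$ is even. This is really a statement about the number of $L$-roots of $X^{\s+1}+X+v$ for all $v\in L$, and it needs the finer structure of Bluher's classification rather than just the counting formula for $N_0$ quoted in Lemma \ref{Omegasizelemma}: one wants that in the $m/d$ even case the polynomials with \emph{no} root in $L$ are, up to the scaling and substitution built into $G_1$, exhausted by the $[1,0,0,u]$ family, which is a consistency check one can also perform numerically via cardinalities. Indeed, a clean way to finish is purely enumerative: Lemma \ref{Cquadruplelemma} gives $2^d(q-1)/(2^d+1)$ legitimate quadruples $[1,0,0,u]$, each $G_1$-orbit meeting this set in (by Lemma \ref{Cfamilyp=2numberlemma}) $|G_1|/((q-1)(2^d+1)) = (q^2-1)(q^2-q)/(2^d+1)$ many of them divided by the appropriate count, and cross-checking the total orbit size against $|\Omega|$ from Lemma \ref{Omegasizelemma} forces the number of orbits to be $2^{d-1}$ and simultaneously forces every orbit to contain a $[1,0,0,u]$; this avoids re-deriving Bluher's root-count and instead leans on the cardinality identities already in hand.
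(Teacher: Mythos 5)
Your closing ``enumerative'' argument is exactly the paper's proof: there are $2^{d-1}$ distinct $G_1$-orbits meeting the set of quadruples $[1,0,0,u]$ (your coset-inversion pairing in the odd-order group $L^{*}/(L^{*})^{\sigma+1}$ is the content of Lemma \ref{Cfamilyp=2numberlemma}), each such orbit has length $\vert G_1\vert /((q-1)(2^d+1))=(q^2-1)(q^2-q)/(2^d+1)$, and $2^{d-1}$ times this equals $\vert\Omega\vert$ from Lemma \ref{Omegasizelemma}, so these orbits exhaust $\Omega$ and every legitimate quadruple is $G_1$-equivalent to a C-family one. That part is correct and complete.

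You should, however, discard the first route entirely rather than treat it as merely difficult: the claim that $[1,0,1,v]$ is never legitimate when $m/d$ is even is \emph{false}. The polynomial attached to $[1,0,1,v]$ is $X^{\sigma+1}+X+v$, and the substitution $X\mapsto \lambda X$ with $\lambda=v$ (followed by normalizing the leading coefficient) carries it to $X^{\sigma+1}+b'X+b'$ with $b'=v^{-\sigma}$; since $v\mapsto v^{-\sigma}$ is a bijection of $L^{*}$ and Bluher's count gives $N_0=2^{d-1}(2^m-1)/(2^d+1)>0$ rootless trinomials of that normalized shape even when $m/d$ is even, there exist $v$ for which $[1,0,1,v]$ is legitimate. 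What actually happens is that such a quadruple is $G_1$-equivalent to some $[1,0,0,u']$ only through a linear isotopy that does not preserve the $[1,0,\ast,\ast]$ pattern, so no refinement of Corollary \ref{secondcor} will give you the normal form directly --- which is precisely why the orbit-counting detour is the right (and, as far as the paper shows, the only available) argument. So: keep the second half, delete the first.
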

\begin{proof} We know from Lemma \ref{Cfamilyp=2numberlemma} that there are precisely $2^{d-1}$
orbits under $G_1$ which belong to the C-family. The stabilizer always has order
$(q-1)(2^d+1),$ so each orbit has length $(q^2-1)(q^2-q)/(2^d+1).$ As there are $2^{d-1}$ such orbits
this exhausts all of $\Omega .$
\end{proof}

\section{The special case $s=m/2$}
\label{s=m/2section}

Case $s=m/2$ is equivalent with $\s\not=1$ but $\s^2=1$ on $L.$
These are the presemifields $B(2,2s,s,l,t), l\not=0.$ We have $m=2s,$ hence $d=s$ and
$m/d=2.$ In particular it follows from Theorem \ref{mp=2devencounttheorem} that we can
assume up to isotopy $p_1=1, p_2=p_3=0, p_4\notin K=\ef_{2^s}.$
\par
For the remainder of this section we will use the following notation:
$$q=2^s, K=\ef_q\subset L=\ef_{q^2}\subset F=\ef_{q^4}.$$
Let $\t :L\la K$ be the trace function. We keep the notation used in Introduction
with respect to a basis $1,z$ of $F\mid L.$

\begin{Definition}
\label{Bqwdef}
Let $w\in L\sm K$ such that $tr_{K\vert\ef_2}(1/\t (w))=0.$
Define a multiplication on $F$ by
$$(a,b)\star (c,d)=(ac+bd^q+wb^qd,a^qd+bc).$$
Let $B_q(w)=(F,\star ).$
\end{Definition}

The condition on $w$ in this definition can be expressed in an equivalent form.

\begin{Lemma}
\label{tracezeroinverselemma}
Let $a\in K^{*}.$ Then $a$ can be written in the form $a=l+1/l$ for some $l\in K$
if and only if $tr_{K\vert\ef_2}(1/a)=0.$
\end{Lemma}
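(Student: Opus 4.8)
\textbf{Proof plan for Lemma \ref{tracezeroinverselemma}.}
The plan is to recognize this as a statement about the image of the additive map $l\mapsto l+1/l$ on $K^*$ and to match image size with kernel/preimage size via the trace. First I would observe that for $l\in K^*$ the element $a=l+1/l$ satisfies $a\neq 0$ precisely when $l\neq 1$ (in characteristic $2$, $l+1/l=0$ iff $l^2=1$ iff $l=1$), so the map $\varphi:K^*\setminus\{1\}\to K^*$, $\varphi(l)=l+1/l$, is well defined. The key algebraic step: $\varphi(l_1)=\varphi(l_2)$ holds iff $l_1+1/l_1=l_2+1/l_2$, i.e. $l_1+l_2=(l_1+l_2)/(l_1l_2)$, which (when $l_1\neq l_2$) forces $l_1l_2=1$, i.e. $l_2=1/l_1$. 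So $\varphi$ is exactly $2$-to-$1$ except possibly at fixed points $l=1/l$, which again only happens at $l=1$ (excluded). Hence the image of $\varphi$ has size $(q-2)/2=(q-2)/2$... wait, $q$ is even, so $q-2$ is even and the image has size $(q-2)/2$; I should double-check this counts the elements $a$ for which a solution exists, and it does.

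Next I would identify the image with the kernel of the trace. Given $a\in K^*$, the equation $l+1/l=a$ is equivalent to $l^2+al+1=0$, i.e. to $(l/a)^2+(l/a)+1/a^2=0$ after dividing by $a^2$ and setting $l'=l/a$. A quadratic $X^2+X+c=0$ has a solution in $K=\ef_{2^s}$ iff $tr_{K\vert\ef_2}(c)=0$ (the standard Artin–Schreier criterion in characteristic $2$). Here $c=1/a^2$, and since the absolute trace is invariant under the Frobenius $x\mapsto x^2$, we have $tr_{K\vert\ef_2}(1/a^2)=tr_{K\vert\ef_2}(1/a)$. Therefore $a=l+1/l$ has a solution $l\in K$ iff $tr_{K\vert\ef_2}(1/a)=0$, and one checks such $l$ is automatically nonzero (since $1/l$ appears) and is $\neq 1$ when $a\neq 0$; this gives both implications of the lemma.

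As a consistency check I would note that the number of $a\in K^*$ with $tr_{K\vert\ef_2}(1/a)=0$ equals the number of $a\in K^*$ with $tr_{K\vert\ef_2}(a)=0$ (inversion permutes $K^*$), which is $q/2-1=(q-2)/2$, matching the image size of $\varphi$ computed above; this cross-check is reassuring but not needed for the proof. There is essentially no obstacle here: the only mild subtlety is keeping the degenerate case $l=1$ (equivalently $a=0$) correctly excluded on both sides and confirming that the Artin–Schreier solution lands in $K^*$ rather than just $K$, which follows since $0$ is not a root of $l^2+al+1$. The proof is short: state the characteristic-$2$ Artin–Schreier criterion, reduce $l+1/l=a$ to it, and invoke Frobenius-invariance of the trace.
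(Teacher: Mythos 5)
Your proof is correct and rests on the same underlying fact as the paper's: the Artin--Schreier criterion that $c\in K$ has absolute trace $0$ if and only if $c=v+v^2$ for some $v\in K$. The paper applies this directly by writing $1/a=1/u+1/u^2$ with $u=l+1$, while you normalize the quadratic $l^2+al+1=0$ to $X^2+X+1/a^2=0$ and use Frobenius-invariance of the trace; these are the same argument up to substitution, and your handling of the degenerate cases ($l\neq 0$, $l\neq 1$) is fine.
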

\begin{proof} If $a$ can be written in the required form, then $1/a=l/(1+l^2)=(u+1)/u^2=1/u+1/u^2$
where $u=l+1.$ This shows $tr_{K\vert\ef_2}(1/a)=0.$ The same argument works also in the
opposite direction.
\end{proof}

\begin{Proposition}
\label{Bqwsemiprop}
$B_q(w)$ in Definition \ref{Bqwdef} is a semifield of order $q^4.$
It has middle nucleus $L,$ left and right nucleus $K$ and is not isotopic to a commutative semifield.
\end{Proposition}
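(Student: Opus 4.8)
The plan is to recognize $B_q(w)$ as (isotopic to) a member of the family $B(2,m,s,l,t)$ already treated, and then to pull back the structural results proved earlier.

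First I would verify that $B_q(w)$ is a presemifield by matching its multiplication with the normal form of Theorem~\ref{mp=2devencounttheorem} / Corollary~\ref{secondcor}: in the case $s=m/2$ we may assume up to isotopy that $[p_1,p_2,p_3,p_4]=[1,0,0,p_4]$ with $p_4\notin K$. Starting from \eqref{circgeneralprodequ} with $p_1=1,p_2=p_3=0$ and $\s^2=1$ on $L$, and performing the substitutions $b\mapsto b^{1/\s}=b^q$, $c\mapsto c^{q}$, $d\mapsto d^{q}$ followed by taking the $q$-th power of the imaginary part (the same kind of substitution used in Proposition~\ref{Knuthl=0prop}), one brings the product into the shape $(a,b)\star(c,d)=(ac+p_4\,b d^{q}+l\,p_4\,b^{q}d,\ a^{q}d+bc)$, which is exactly Definition~\ref{Bqwdef} once we identify the parameters as $w=p_4$ (so $w\notin K$) and read off that the coefficient $l$ gets absorbed by scalar/diagonal isotopy so that effectively $l=1$; the condition $\mathrm{tr}_{K\vert\ef_2}(1/\t(w))=0$ is then precisely the legitimacy condition $l\in L^*\sm(L^*)^{\s-1}$ translated via Lemma~\ref{tracezeroinverselemma} (here $\t(w)=w+w^q$ plays the role of the relevant norm-type quantity). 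Hence $B_q(w)=B(2,2s,s,l,[1,0,0,w])$ up to isotopy for a legitimate $l\ne 0$, and it is a semifield of order $q^4$ because $(1,0)$ serves as a two-sided identity for $\star$ (direct check) while the presemifield property is Theorem~\ref{lformchar2theorem}.

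Next, the nucleus statements: by Corollary~\ref{nofieldcor} the semifield is not a field, and the nuclei of the semifields isotopic to $B(2,m,s,l,t)$ with $l\ne0$ were determined in Section~\ref{nucleisection} — the left and right nucleus equal the center of order $2^{\gcd(m,s)}=2^{s}=q=|K|$, and the middle nucleus is the quadratic extension of the center, i.e. a field of order $q^{2}=|L|$. So I would simply invoke that determination, after checking that the displayed $K$ and $L$ sit inside $B_q(w)$ as the asserted nuclei: $K$ acts centrally (one verifies $(k,0)\star(a,b)=(a,b)\star(k,0)=(ka,kb)$ for $k\in K$, using $k^q=k$), and $L$ lies in the middle nucleus by a direct associativity check $((a,b)\star(\lambda,0))\star(c,d)=(a,b)\star((\lambda,0)\star(c,d))$ for $\lambda\in L$ — here one uses that left-multiplication by $(\lambda,0)$ is $\ef_q$-linear and the bilinearity of $\star$. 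That $L$ is the \emph{full} middle nucleus and $K$ the full left/right nucleus then follows from the Section~\ref{nucleisection} computation rather than being reproved.

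Finally, for "not isotopic to a commutative semifield" I would cite the main result of Section~\ref{noncommutsection} (the statement, quoted in the structure subsection, that $B(2,m,s,l,t)$ with $s\notin\{0,m\}$, $l\ne0$, is never isotopic to a commutative semifield); since $s=m/2\notin\{0,m\}$ this applies verbatim. The main obstacle in this proof is the bookkeeping in the first step: carrying the coefficient $l$ correctly through the chain of substitutions and scalar/diagonal isotopies so as to land exactly on Definition~\ref{Bqwdef} with the stated side-condition on $w$, and confirming that the legitimacy conditions on $t$ and on $l$ from Definition~\ref{Bfamilyintrodef} transform into, respectively, $w\notin K$ and $\mathrm{tr}_{K\vert\ef_2}(1/\t(w))=0$ via Lemma~\ref{tracezeroinverselemma}. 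Once that identification is pinned down, everything else is a citation of earlier sections.
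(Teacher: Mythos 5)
Your overall strategy --- identify $B_q(w)$ with a member of the family $B(2,2s,s,l,[1,0,0,p_4])$ and then import the structural theorems --- runs into a genuine circularity, and the identification itself is executed incorrectly. The circularity: the proofs of Theorem~\ref{p=2leftrightnucleitheorem} and Theorem~\ref{p=2middlenucleustheorem} both begin by assuming that $\s^2$ is not the identity on $L$, explicitly deferring the case $s=m/2$ to Section~\ref{s=m/2section}; likewise the proof of Theorem~\ref{commlformchar2meventheorem} assumes $\s^2\not=\mathrm{id}$ (its key step $v_4^{\s^2-1}l^{\s+1}=v_4^{\s-1}$ degenerates when $\s^2=1$). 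For $s=m/2$ the full determination of the nuclei and the non-commutativity are established precisely by Proposition~\ref{Bqwsemiprop}, so they cannot be cited to prove it. The paper's proof is instead entirely direct: $(1,0)$ is a two-sided identity; absence of zero divisors reduces (via $c=ea^q$, $d=eb$) to the factorization $e\bigl(a^{q+1}+(e^{q-1}+w)b^{q+1}\bigr)=0$, which forces $\t (w)=t+1/t$ with $t=e^{q-1}$ and then contradicts $w\notin K$ using Lemma~\ref{tracezeroinverselemma}; the nuclei are computed directly (your lower-bound checks $K\subseteq$ left/right nucleus and $L\subseteq$ middle nucleus are fine, but the upper bounds also need a direct computation here); and non-commutativity is a direct application of the Ganley criterion.

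Second, the reduction to the $B$-family is not correct as written. Starting from $x\circ y=(ac^{\s}+la^{\s}c+p_4bd^{\s}+lp_4b^{\s}d,\ ad+bc)$ with $\s =q$, your substitutions $b,c,d\mapsto b^q,c^q,d^q$ together with the $q$-th power on the imaginary part yield $(ac+la^qc^q+lp_4bd^q+p_4b^qd,\ a^qd+bc)$: the cross term $la^qc^q$ does not disappear. Eliminating it requires the additional $\ef_q$-linear map $u\mapsto\frac{1}{l^2+1}(lu+u^q)$ on the real part, as in Proposition~\ref{Bqwcompareprop}, and the resulting parameter is $w=(l^2p_4+p_4^q)/(l\t (p_4))$, not $p_4$. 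Moreover $l$ cannot be normalized to $1$: legitimacy requires $l\notin (L^{*})^{q-1}$, which contains $1$ (the paper normalizes $l\in K^{*}$, $l\not=1$), and with $l=1$ the factor $e^{\s}+le$ in the proof of Theorem~\ref{lformchar2theorem} vanishes for $e\in K^{*}$, so the presemifield property would fail. A corrected identification (exactly the content of Proposition~\ref{Bqwcompareprop} and its corollary, which do not depend on Proposition~\ref{Bqwsemiprop}) would legitimately give the presemifield property via Theorem~\ref{lformchar2theorem}, but the nuclei and the non-commutativity for this case still have to be verified by hand.
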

\begin{proof}
Clearly $1=(1,0)$ is the unit of multiplication.
The imaginary part vanishes if and only if both $c=ea^q$ and $d=eb$ hold for some $e\in L.$
The real part is then
$e(a^{q+1}+(e^{q-1}+w)b^{q+1}))=0,$ with $e\not=0.$
Let $t =e^{q-1}\not=0$ and observe that $t^q=1/t.$
If $a=0,$ then $w=t.$ If $a\not=0,$ then divide by $ea^{q+1}.$
In both cases $t+w\in K.$ It follows $\t (w)=t+1/t.$
Let $a=\t (w).$ By our assumptions and Lemma \ref{tracezeroinverselemma}, we have
$tr_{K\vert\ef_2}(1/a)=0;$ equivalently $\t (w )=l+1/l$ for some
$l\in K.$ As $t+1/t=l+1/l,$ we have $t\in K$ and therefore $t=1.$ This leads to the contradiction $w\in K.$
The nuclei are determined by a direct calculation.
Another direct calculation using the Ganley criterion \cite{Ganley72} shows non-commutativity.
\end{proof}

Consider now the presemifields $B(2,2s,s,l,[1,0,0,p_4]),l\not=0.$ By isotopy it may be
assumed $l\in K, l\not=1.$

\begin{Proposition}
\label{Bqwcompareprop}
$B(2,2s,s,l,[1,0,0,p_4]),$ with $ l\in K^{*},$ is isotopic to $B_q(w),$ where
$w=(l^2p_4+p_4^q)/(l\t (p_4)).$
\end{Proposition}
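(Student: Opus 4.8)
The plan is to exhibit an explicit isotopy $(\alpha_1,\alpha_2,\beta)\in GL(2s,2)^3$ carrying the multiplication $x\circ y$ of $B(2,2s,s,l,[1,0,0,p_4])$ to the multiplication $\star$ of $B_q(w)$. Since $s=m/2$ we have $\sigma=2^s$ with $\sigma^2=1$ on $L$, so $\sigma$ acts on $L=\ef_{q^2}$ as the $q$-power Frobenius over $K=\ef_q$; I will write $x^\sigma=x^q$ for $x\in L$ throughout, which simplifies all the exponents in Definition \ref{Bfamilyintrodef}. Substituting $p_1=1$, $p_2=p_3=0$ and $l\in K^*$ into Equation (\ref{circgeneralprodequ}) gives
\begin{equation*}
x\circ y=(ac^{q}+la^{q}c+p_4bd^{q}+lp_4b^{q}d,\;ad+bc).
\end{equation*}
First I would normalize the imaginary part: apply $c\mapsto c^{1/q}$, $d\mapsto d^{1/q}$ (a linear substitution on $y$, since $1/q$-power is $\ef_2$-linear on $L$) and take the $q$-th power of the imaginary part. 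The imaginary part then becomes $(ad+bc)^q\cdot(\text{after relabel})$; more carefully, one checks that after these moves the product takes the shape $(ac+la^qc^{?}+\dots,\ a^qd+bc)$ with the desired imaginary part $a^qd+bc$ matching Definition \ref{Bqwdef}, at the cost of the real part acquiring a factor involving $l$ on the $a^q c$ term. The remaining freedom is diagonal/linear isotopy on $x$ and $y$ (scaling $a,b,c,d$ by elements of $L^*$ and forming $K$-linear combinations, as in Propositions \ref{lformisotopchar2prop} and \ref{linearisoprop}) together with scaling the output; I would use this to kill the coefficient $l$ in front of $a^qc$ and to consolidate $ac+la^qc$ — note $x+lx^q$ for $x\in L$, composed with an appropriate $L$-linear map, is an $\ef_2$-linear bijection of $L$ (this uses $l\ne 1$, $l\in K$, so $x+lx^q=0$ forces $x^{q-1}=1/l\notin K$ impossible unless $x=0$) — so after one more linear substitution on $c$ the first summand of the real part becomes simply $ac$.

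Having reduced to $(ac+Pb d^{q}+Qb^{q}d,\ a^qd+bc)$ for explicit $P,Q\in L$ built from $p_4,l$ and the substitution scalars, I would then match this against $(ac+bd^q+wb^qd,\ a^qd+bc)$. Comparing coefficients: I need $P=1$ (absorbed by scaling $b$ by $P^{-1/(q-1)}$-type factor, tracking how that scaling feeds back into the $b^q d$ term) and then $w=Q/P$ (or the appropriate ratio) equals $(l^2p_4+p_4^q)/(l\tau(p_4))$. The explicit form $w=(l^2p_4+p_4^q)/(l\tau(p_4))$ is exactly what falls out once all the substitution scalars are chosen to make $P=1$ and the imaginary part exactly $a^qd+bc$; in particular the denominator $l\tau(p_4)=l(p_4+p_4^q)$ is visibly nonzero because $p_4\notin K$ forces $\tau(p_4)\ne 0$ and $l\ne 0$. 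As a consistency check I would verify $w\in L\setminus K$ (needed for $B_q(w)$ to be defined and non-field): $w\in K$ would read $l^2p_4+p_4^q=l(l p_4+p_4^q)\in K\cdot$, i.e. $(l^2p_4+p_4^q)^q=l^2p_4+p_4^q$, which after expanding and using $l\in K$ reduces to $(l^2-1)(p_4^q-p_4)=0$, impossible since $l\ne1$ and $p_4\notin K$. Finally, the trace-zero condition $tr_{K\vert\ef_2}(1/\tau(w))=0$ required in Definition \ref{Bqwdef} must hold automatically for this $w$; I would confirm it by computing $\tau(w)=w+w^q$ from the formula and showing $1/\tau(w)$ lands in the image of $x\mapsto x+x^2$ on $K$ — indeed, legitimacy of $[1,0,0,p_4]$ together with $l\in K^*$ is designed (via Theorem \ref{multKnuthsecondchar2theorem} / the no-root condition) to be equivalent to precisely this, so this step is where I would invoke Lemma \ref{tracezeroinverselemma} in reverse.

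The main obstacle is bookkeeping rather than conceptual: the isotopy is a composition of four or five elementary substitutions (the $1/q$-power relabel on $y$, two $L^*$-scalings, one $K$-linear mixing on $c$, and one output scaling), and the final constant $w$ is sensitive to the exact order and to whether a given scalar is applied to $b$ before or after the $b^qd$ term is formed — a misplaced $q$-th power changes $w$ to $w^q$ or $1/w$, which would still give an isotopic semifield (by the $[1,0,0,u]\leftrightarrow[1,0,0,1/u]$ and Galois symmetries of Lemma \ref{Cfamilyp=2numberlemma}) but not the stated formula. So the real work is to fix one clean choice of substitutions and grind the composition carefully; I expect the cleanest route is to start from the $F$-level Equation (\ref{circFlevelprodequ}) with $t=t(C_1,C_2)$ for the pair $(C_1,C_2)$ corresponding to $[1,0,0,p_4]$ via Lemma \ref{fromptovlemma}, use Theorem \ref{C10p=2addisotheorem} to normalize $C_1$, and only then drop down to $L$-coordinates, since the $F$-level form makes the $\star$-shape $(ac+bd^q+wb^qd,a^qd+bc)$ appear more directly.
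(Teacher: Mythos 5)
Your overall strategy --- exhibit explicit substitutions bringing $x\circ y=(ac^q+la^qc+p_4bd^q+lp_4b^qd,\;ad+bc)$ into the shape $(ac+Pbd^q+Qb^qd,\;a^qd+bc)$, then scale $b,d$ to force $P=1$ and read off $w$ --- is the same as the paper's, and your verifications that $\tau(p_4)\not=0$ and $w\notin K$ are correct. But two of your concrete steps fail, and the computation that actually produces the stated formula is never carried out. First, substituting $c\mapsto c^{1/q},\,d\mapsto d^{1/q}$ and raising the imaginary part to the $q$-th power turns $ad+bc$ into $a^qd+b^qc$, not the required $a^qd+bc$; the correct move is simply $a\mapsto a^q$, which in addition turns the first two real-part terms into $(ac)^q+l\,ac$, a function of the single quantity $ac$. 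Second, no substitution on $c$ alone can consolidate $ac^q+la^qc$ into $ac$, since the two terms pair $c^q$ with $a$ and $c$ with $a^q$; what is needed is to post-compose the \emph{real part} (i.e.\ use the output map $\b$) with the additive bijection $u\mapsto\frac{1}{l^2+1}(lu+u^q)$ of $L$ --- this is where $l\in K$, $l\not=1$ enters, essentially as in your injectivity remark. After $a\mapsto a^q$ that map sends the real part to $ac+\frac{l\tau(p_4)}{l^2+1}bd^q+\frac{p_4^q+l^2p_4}{l^2+1}b^qd$, and then $b\mapsto\l b$, $d\mapsto\l d$ with $\l^{q+1}=\frac{l^2+1}{l\tau(p_4)}$ (possible because the right-hand side lies in $K^{*}$ and the norm $L^{*}\la K^{*}$ is onto) multiplies both coefficients by $\l^{q+1}$ and yields exactly $B_q(w)$ with $w=(l^2p_4+p_4^q)/(l\tau(p_4))$. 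You acknowledge yourself that a misplaced power could produce $w^q$ or $1/w$ instead; since the proposition asserts a specific formula, pinning down one chain of substitutions and computing the resulting constant \emph{is} the content of the proof and cannot be left as ``what falls out.''

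A smaller but genuine error: the condition $tr_{K\vert\ef_2}(1/\tau(w))=0$ of Definition \ref{Bqwdef} does not come from the legitimacy of $[1,0,0,p_4]$. It follows from the hypothesis on $l$ alone: from the formula, $\tau(w)=w+w^q=(l^2+1)/l$, hence $1/\tau(w)=l/(l+1)^2=1/u+1/u^2$ with $u=l+1$, which has absolute trace $0$ (compare Lemma \ref{tracezeroinverselemma}). Legitimacy of $p_4$ (equivalently $p_4\notin K$) is used only where you in fact used it, namely to ensure $\tau(p_4)\not=0$ and $w\notin K$.
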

\begin{proof}
We have
$$x\circ y=(ac^q+la^qc+p_4bd^q+lp_4b^qd,ad+bc).$$
Substitute $a\mapsto a^q,$ then apply $u\mapsto\frac{1}{l^2+1}(lu+u^q)$ to the real part.
This yields the product
$(ac+\frac{l\t (p_4)}{l^2+1}bd^q+\frac{p_4^q+l^2p_4}{l^2+1}b^qd,a^qd+bc).$
Finally, apply the substitution $b\mapsto\l b, d\mapsto \l d,$ with $\l\in L$ such that
$\l^{q+1}=\frac{l^2+1}{l\t (p_4)}.$
Observe that $w$ does indeed satisfy the condition of Definition \ref{Bqwdef}. In fact,
$\t (w)=(l^2+1)/l,$ hence $1/\t (w)=l/(l^2+1)=1/u+1/u^2,$ with $u=l+1;$ then
$1/\t (w)$ has absolute trace $0.$
\end{proof}

\begin{Corollary}
The semifields $B(2,2s,s,l,t)$
coincide with the $B_q(w),$ up to isotopy.
\end{Corollary}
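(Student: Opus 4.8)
The final statement to prove is the Corollary: \emph{The semifields $B(2,2s,s,l,t)$ coincide with the $B_q(w)$, up to isotopy.}

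The plan is to establish a two-way correspondence between the two families, both directions of which have essentially already been prepared by the preceding results. First I would argue the ``forward'' inclusion: every presemifield $B(2,2s,s,l,[p_1,p_2,p_3,p_4])$ with $l\neq 0$ is isotopic to some $B_q(w)$. Since $s=m/2$ gives $d=\gcd(m,s)=s$ and $m/d=2$, Theorem~\ref{mp=2devencounttheorem} tells us every legitimate quadruple lies in the C-family, so up to isotopy we may take $p_1=1$, $p_2=p_3=0$, $p_4\notin K$. By the first item of Proposition~\ref{lformisotopchar2prop} we may replace $l$ by $\lambda^{\s-1}l$ for any $\lambda\in L^*$; since $\s=q$ on $L$, the image of $L^*$ under $x\mapsto x^{q-1}$ is the norm-one subgroup $N^{-1}(1)$, whose intersection with the relevant cosets allows us to normalize $l$ into $K^*\setminus\{1\}$ (here one must also use Galois isotopy to rule out $l=1$ being forced, or simply note $l\notin(L^*)^{\s-1}$ forbids $l=1$ up to the allowed adjustments). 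Then Proposition~\ref{Bqwcompareprop} does exactly the remaining work: it exhibits the explicit isotopy to $B_q(w)$ with $w=(l^2p_4+p_4^q)/(l\,\t(p_4))$, and it even checks that this $w$ satisfies the defining condition $tr_{K\vert\ef_2}(1/\t(w))=0$ of Definition~\ref{Bqwdef}.

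Second I would argue the ``reverse'' inclusion: every $B_q(w)$ arises, up to isotopy, as some $B(2,2s,s,l,t)$. Given $w\in L\setminus K$ with $tr_{K\vert\ef_2}(1/\t(w))=0$, Lemma~\ref{tracezeroinverselemma} produces $l\in K$ with $\t(w)=l+1/l$; taking $l\neq 0,1$ (the excluded values correspond to degenerate $w$), one then needs a $p_4\in L\setminus K$ with $(l^2p_4+p_4^q)/(l\,\t(p_4))=w$. Writing $p_4=x$ and expanding, this is an $\ef_2$-linear equation in $x$ over $L$ — namely $l^2 x + x^q = l\,w\,\t(x) = l\,w\,(x+x^q)$, i.e. $(l^2+lw)x = (lw+1)x^q$ — so one checks the coefficient map is invertible (equivalently that the only solution for $w\notin K$ forces $x\notin K$, and the map $x\mapsto w(x)$ is onto the legitimate values), which is the same Bluher-type / norm computation already implicit in Proposition~\ref{Bqwcompareprop}. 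Having found such $p_4$, legitimacy of $[1,0,0,p_4]$ (i.e. $p_4\notin(L^*)^{\s+1}=(L^*)^{q+1}=K^*$, since $N:L^*\to K^*$ is the $(q+1)$-power map) is automatic from $p_4\notin K$, and Proposition~\ref{Bqwcompareprop} applied to this data gives $B(2,2s,s,l,[1,0,0,p_4])\cong B_q(w)$.

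Combining the two inclusions gives equality of the two families up to isotopy. The main obstacle I expect is the bookkeeping in the reverse direction: verifying that for \emph{every} admissible $w$ one can solve back for a legitimate pair $(l,p_4)$, which amounts to checking surjectivity of the parametrization $w=(l^2p_4+p_4^q)/(l\,\t(p_4))$ onto $\{w\in L\setminus K : tr_{K\vert\ef_2}(1/\t(w))=0\}$. This is not deep — it reduces to solving a single $\ef_2$-linear equation over $L$ and a norm-coset count of the type used in Lemma~\ref{Omegasizelemma} — but it is the one place where a short genuine computation, rather than a citation, is needed; everything else is assembling Theorem~\ref{mp=2devencounttheorem}, Proposition~\ref{lformisotopchar2prop}, Lemma~\ref{tracezeroinverselemma} and Proposition~\ref{Bqwcompareprop}.
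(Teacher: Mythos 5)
Your proposal is correct and follows essentially the same route as the paper: normalize to $[1,0,0,p_4]$ via Theorem~\ref{mp=2devencounttheorem}, use Proposition~\ref{Bqwcompareprop} for the forward direction, and invert the parametrization $p_4\mapsto w$ for the reverse direction. The one computation you defer (solvability of $(l^2+lw)x=(lw+1)x^q$) does go through, but the paper short-circuits it entirely by exhibiting the explicit inverse: with $\t(w)=l+1/l$ from Lemma~\ref{tracezeroinverselemma}, setting $p_4=w+1/l$ gives $\t(p_4)=\t(w)=l+1/l$ and $l^2p_4+p_4^q=l^2w+w=lw\,\t(p_4)$, so $w=(l^2p_4+p_4^q)/(l\,\t(p_4))$ by a one-line check, and $p_4\notin K$ (hence legitimacy) is immediate.
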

\begin{proof}
We saw that $p_1=1, p_2=p_3=0$ can be assumed. Also, the mapping
$p_4\mapsto w$ is given by $w=\frac{p_4^q+l^2p_4}{l\t (p_4)}.$
Let now $w$ be given.
As $tr_{L\vert\ef_2}(1/\t (w))=0,$ we can write
$\t (w)=l+1/l$ for some $l\in L.$ Let $p_4=w+1/l.$ This describes the inverse of the mapping above.
\end{proof}

In the sequel we give a complete census of those semifields.

\begin{Theorem}
\label{Bqwcensustheorem}
$B_q(w)$ is isotopic to $B_q(v)$ if and only if $v=\phi (w)$ for some field automorphism
$\phi$ of $L=\ef_{q^2}.$
The order of the autotopism group of $B_q(w)$ is
$2\iota (q^2-1)^2$ where $\iota$ is the order of the stabilizer of $w$ in the Galois
group of $L$ over $\ef_2.$
\end{Theorem}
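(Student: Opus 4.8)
The plan is to compute the autotopism group of $B_q(w)$ directly and read off the isotopy classification from the resulting parametrisation. An autotopism is a triple $(\alpha_1,\alpha_2,\beta)\in GL(4,\ef_2)^3$ with $\beta(\alpha_1(x)\star\alpha_2(y))=x\star y$. First I would exploit the nucleus information from Proposition \ref{Bqwsemiprop}: isotopic semifields have isomorphic nuclei, so the middle nucleus (isomorphic to $L$) and the left/right nuclei (isomorphic to $K$) are preserved, and the center is $K$. A standard fact about semifields with left (resp. right) nucleus $K$ is that $\alpha_1$ and $\alpha_2$ must be $K$-semilinear, and because the left and right nuclei coincide with the center one gets strong constraints: the maps split according to the $L$-module structure coming from the middle nucleus. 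Concretely I would write $x=(a,b)$, $y=(c,d)$ and look for the autotopisms in the form induced by the substitutions already catalogued in Section \ref{isotopysection} — Galois isotopy (Frobenius on $L$), the scaling $b\mapsto\lambda b$, $d\mapsto\lambda d$, and left/right multiplications by $K$-elements — then prove that these generate everything.

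The key computational step is: starting from $(a,b)\star(c,d)=(ac+bd^q+wb^qd,\,a^qd+bc)$, impose $\beta(\alpha_1(x)\star\alpha_2(y))=x\star y$ and compare imaginary parts first. Since the imaginary part $a^qd+bc$ is essentially the product in the Hughes--Kleinfeld/twisted-field skeleton, its autotopisms are classically known and force $\alpha_1,\alpha_2,\beta$ to act, after composing with the listed isotopies, as $(a,b)\mapsto(\gamma a,\delta b)$ type maps together with a field automorphism $\phi$ of $L$. Then the real part $ac+bd^q+wb^qd$ must be preserved, which, after using $\phi$, forces $\phi(w)$ to match $w$ up to the scalings already available; the scalings act trivially on $w$ because the $bd^q$ and $b^qd$ coefficients must transform compatibly (one coefficient is $1$, pinning down the scaling ratio, and then the other coefficient equation reads $\phi(w)=w$ exactly). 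This simultaneously shows (i) $B_q(w)\cong B_q(v)$ iff $v$ and $w$ lie in the same $\mathrm{Gal}(L/\ef_2)$-orbit, and (ii) the autotopism group of $B_q(w)$ consists of: a factor $\ef_2[\mathrm{Stab}_{\mathrm{Gal}(L/\ef_2)}(w)]$ of order $\iota$ coming from admissible field automorphisms fixing $w$; a factor $(q^2-1)^2$ coming from the independent scalings $a\mapsto \gamma a$ (with a forced companion on $c$) and $b\mapsto\delta b$ (with companion on $d$), using $\gamma,\delta\in L^*$; and a factor $2$ coming from the opposite/switch symmetry $B_q(w)\leftrightarrow B_q(w)$ (Proposition \ref{lformswitchCchar2prop} with $s=m/2$ returns the same semifield, contributing an extra order-$2$ autotopism), giving the total $2\iota(q^2-1)^2$.

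The main obstacle I expect is the rigidity part: showing there are \emph{no} autotopisms beyond those listed. This requires carefully using that the left and right nuclei are only $K=\ef_q$ while the middle nucleus is $L=\ef_{q^2}$, so $\alpha_1$ is left-$L$-linear-up-to-automorphism on the "$b$-part" but only $K$-semilinear globally; one must rule out exotic mixings between the real and imaginary components. The cleanest route is to first show $\alpha_1,\alpha_2$ preserve the line $\{(0,b)\}$ (the image of the middle nucleus action, or the kernel of the imaginary projection on one side) and the complementary structure, reducing the $4\times 4$ matrices to $2\times 2$ block-diagonal shape over $L$ twisted by a Frobenius power; after that reduction the equations become the short system in $\gamma,\delta,\phi$ solved above. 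I would cite the classification in Cardinali--Polverino--Trombetti \cite{CPT} and Knuth \cite{KnuthJAlg} to shortcut the part of the argument that identifies the autotopism structure of the imaginary-part skeleton, and then only the real-part coefficient comparison is genuinely new work.
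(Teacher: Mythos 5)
Your outline follows the same general strategy as the paper (parametrize candidate isotopies via the nucleus structure, then compare coefficients in the real and imaginary parts), but it leaves the decisive step unproved. The whole difficulty of the theorem is the rigidity claim you defer to the end: that $\a_1,\a_2$ can be reduced to maps preserving the decomposition $F=L\oplus Lz$. This is not automatic and cannot be shortcut by citing the classification of the ``imaginary-part skeleton.'' What the nucleus argument actually gives (this is the content of Lemma \ref{p4lemma}) is only that $\a_1(a,0)=\phi(a)A$, $\a_2(c,0)=\phi(c)*B$, $\a_1(0,c)=C*\phi(c)$, $\a_2(0,d)=\phi(d^q)*D$ for \emph{arbitrary} nonzero constants $A=(A_1,A_2),\dots,D=(D_1,D_2)\in F$, subject to compatibility equations. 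One must then rule out the case in which all eight coordinates $A_1,\dots,D_2$ are nonzero, and handle the various mixed vanishing patterns; in the paper this is a genuine computation that repeatedly exploits $w+w^q=l+1/l$ with $l\in K$ and the consequence $w^{q+1}\not=1$ to reach contradictions such as $w\in K$. Your proposal contains no substitute for this analysis, and it is precisely where an incorrect $w$ (one failing the trace condition) would slip through.

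Second, your structural conclusion is wrong in a way that matters for the count. The autotopisms do \emph{not} all reduce to diagonal maps $(a,b)\mapsto(\g a,\d b)$: exactly half of them are of swap type, e.g.\ $\a_1(a,b)=(\phi(b)C_1,\phi(a)A_2)$, arising from the vanishing pattern $A_1=B_1=C_2=D_2=0$. These are honest autotopisms of $B_q(w)$ itself (the relevant equations force $\phi(v)=w^q$ in that branch, and $B_q(w^q)=B_q(w)$ up to the Galois isotopy already absorbed into $\phi$), not a consequence of Proposition \ref{lformswitchCchar2prop}, which concerns a different operation and in any case does not prove that composing one switch with the diagonal family exhausts the group. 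Without the full case analysis you would either miss this family (getting $\iota(q^2-1)^2$) or have no argument that the factor is exactly $2$.
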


In the remainder of this section we prove Theorem \ref{Bqwcensustheorem}.
One direction is obvious: $B_q(w)$ is isotopic to $B_q(\phi (w))$ for each field automorphism
$\phi$ of $L.$

\begin{Lemma}
\label{p4lemma}
Let $x*y$ denote the product in $B_q(w).$
The triple $(\a_1,\a_2,\b )$ defines an isotopic semifield with middle nucleus $L,$ left and right
nucleus $K$ and satisfying
$(a,0)\circ (c,0)=(ac,0), (0,b)\circ (c,0)=(0,bc), (a,0)\circ (0,d)=(0,a^qd)$ if and only if
for some field automorphism $\phi$ of $L$ the following hold:
$$\a_1(a,0)=A*\phi (a)=\phi (a)A, \a_2(c,0)=\phi (c)*B=(\phi (c)B_1,\phi (c)^qB_2).$$
$$\a_1(0,c)=C*\phi (c)=(\phi (c)C_1,\phi (c)C_2), \a_2(0,d)=\phi (d^q)*D=(\phi (d^q)D_1,\phi (d)D_2)$$
where $A=(A_1,A_2), \dots ,D=(D_1,D_2)$ are nonzero constants and
the following compatibility conditions are satisfied:

\begin{equation}
\label{D1D2equ}
A_1^qD_2=B_1C_2, A_2D_1=B_2C_1^q,
\end{equation}

\begin{equation}
\label{wequ}
wA_2^qD_2=B_1C_1+B_2^qC_2,
wB_2C_2^q=A_1D_1+A_2D_2^q.
\end{equation}

\end{Lemma}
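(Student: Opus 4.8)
The plan is to analyze what it means for an isotopy $(\a_1,\a_2,\b)$ to carry $B_q(w)$ to a semifield $(F,+,\circ)$ whose multiplicative behaviour on the coordinate subspaces $L\cdot 1$ and $L\cdot z$ is prescribed exactly as in the statement. First I would set $x\circ y=\b(\a_1(x)\star\a_2(y))$ and feed in the four products $(a,0)\circ(c,0)$, $(0,b)\circ(c,0)$, $(a,0)\circ(0,d)$, $(0,b)\circ(0,d)$. The first three are dictated; the fourth, $(0,b)\circ(0,d)$, is free and will absorb the constant $w$. The key structural input is that in $B_q(w)$ the subfield $K=\ef_q$ is the left and right nucleus and $L=\ef_{q^2}$ is the middle nucleus, and these nuclei are isotopy invariants; moreover the semisimple ``coordinate'' decomposition $F=L\oplus Lz$ with $(a,0)$ acting as honest field multiplication on the real part is, up to the allowed normalizations, essentially rigid. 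So the first step is to pin down $\a_1,\a_2,\b$ enough to force each of them to be, on each of the two summands $L\cdot 1$ and $L\cdot z$, the composition of a fixed field automorphism $\phi$ of $L$ with left/right multiplication by a constant of $B_q(w)$.

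Concretely, I would argue as follows. Using associativity in the target with elements of the nuclei, the map $a\mapsto \a_1(a,0)$ must be a ring-type homomorphism intertwining multiplication on the real axis with $\star$; since the real axis of $B_q(w)$ is just the field $L$, this forces $\a_1(a,0)=\phi(a)A$ for a field automorphism $\phi$ of $L$ and a nonzero $A\in F$, and similarly for the other three restrictions, with the \emph{same} $\phi$ (the shared automorphism comes from comparing the two descriptions of, say, $(a,0)\circ(0,d)$ versus $(a,0)\circ(c,0)$ and using that $\phi$ is determined by its action on $L$). Writing $A=(A_1,A_2)$, \dots, $D=(D_1,D_2)$ and expanding $\star$ via Definition~\ref{Bqwdef}, the requirement $(a,0)\circ(0,d)=(0,a^qd)$ — i.e.\ that the real part vanishes and the imaginary part is $a^q d$ — is bilinear in $\phi(a)$ and $\phi(d)$ (or $\phi(d^q)$, depending on which side $d$ sits), so comparing coefficients of the independent monomials yields exactly the two equations \eqref{D1D2equ}. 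The freedom in $(0,b)\circ(0,d)$ then produces the product expression whose real part carries the constant $w$: matching its coefficients against $w$ gives precisely \eqref{wequ}.

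The main obstacle — and the step that needs care rather than routine computation — is establishing the rigidity: that $\a_1$ and $\a_2$ must respect the decomposition $F=L\oplus Lz$ and act ``$\phi$-semilinearly times a constant'' on each piece, rather than mixing real and imaginary parts in some more general linear way. The leverage for this is that the prescribed products $(a,0)\circ(c,0)=(ac,0)$ etc.\ already encode that $L\cdot 1$ is a sub-semifield isomorphic to $\ef_{q^2}$ sitting inside the target with $L\cdot 1$ as its own set, forcing $\a_1(L\cdot 1)$ and $\a_2(L\cdot 1)$ to be one-dimensional $L$-subspaces on which $\star$ behaves field-like; a short Schur-type / counting argument on $\star$-multiplication then shows these subspaces can only be $L\cdot 1$ itself (up to the permitted scaling), and the semilinearity of $\a_1|_{L\cdot1}$ follows because a bijective additive map $L\to L$ multiplicative up to a fixed constant is $\phi$-semilinear. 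Once that rigidity is in hand, everything else is a bookkeeping expansion of $\star$, and the two boxed pairs of identities \eqref{D1D2equ}, \eqref{wequ} drop out by comparing coefficients. Conversely, given constants $A,\dots,D$ and an automorphism $\phi$ satisfying \eqref{D1D2equ} and \eqref{wequ}, one checks directly that the maps so defined assemble into a well-defined isotopy with the asserted nuclei, which closes the ``if'' direction.
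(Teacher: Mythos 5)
Your skeleton is the right one --- use the middle-nucleus hypothesis to force $\a_1,\a_2$ to act on the real axis as a field automorphism $\phi$ composed with $*$-multiplication by a constant, then let the prescribed products determine everything else and compare coefficients --- but the two load-bearing steps are not actually supplied. The rigidity you flag as ``the main obstacle'' is not obtained by a Schur-type or counting argument; in the paper it follows at once from the exact set-level transformation rule for the middle nucleus under isotopy, $M'=\b (A*M*B)$ with $A=\a_1(1,0), B=\a_2(1,0),$ combined with the compatibility condition $\b^{-1}(x)=A*\a_2(x)=\a_1(x)*B$ expressing that the isotope has identity $(1,0).$ You never introduce this compatibility condition, yet without it nothing can be computed: $\b$ does \emph{not} respect the decomposition $F=L\oplus Lz$ (one has $\b^{-1}(c,0)=(\phi (c)A)*B,$ which has nonzero imaginary part in general), so reading $(a,0)\circ (0,d)=(0,a^qd)$ as ``the real part of $\a_1(a,0)*\a_2(0,d)$ vanishes'' is not legitimate. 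The correct route is to evaluate $\b^{-1}(0,a^qd)$ via $\b^{-1}(0,b)=\a_1(0,b)*B=A*\a_2(0,b)$ and equate it with $\a_1(a,0)*\a_2(0,d);$ only then does comparing coefficients of $\phi (a^qd)$ and $\phi (ad^q)$ make sense. (Also, $\a_2(L\times\{0\})=\{(\phi (c)B_1,\phi (c)^qB_2)\}$ is not an $L$-line, so the ``one-dimensional $L$-subspaces'' picture fails for $\a_2.$)

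Second, your accounting of where the two displayed systems come from is wrong, which suggests the computation was not carried out. Both (\ref{D1D2equ}) and (\ref{wequ}) come from the single identity $A*\a_2(0,b)=\a_1(0,b)*B$ (equivalently, from the two evaluations of $\b^{-1}(0,a^qd)$ just described): the imaginary part gives (\ref{D1D2equ}), and the real part --- which picks up $w$ from the term $wb^qd$ in the definition of the product --- gives (\ref{wequ}). The product $(0,b)\circ (0,d)$ is not constrained by the hypotheses of this lemma at all; it is precisely the remaining freedom, and it is where the \emph{target} parameter $v$ enters in the proposition that follows. There is therefore nothing in $(0,b)\circ (0,d)$ to ``match against $w$,'' and the derivation of (\ref{wequ}) you describe cannot be executed as stated, even though the equations themselves are already available from the step you did set up.
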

\begin{proof}
Let $x\circ y=\b (\a_1(x)*\a_2(y))$  and
$\a_1(1,0)=A, \a_2(1,0)=B, \a_1(0,1)=C, \a_2(0,1)=D.$
Then $x\circ y$ defines a semifield if $\b^{-1}(x)=A*\a_2(x)=\a_1(x)*B$ always holds.
We refer to this as the compatibility condition.
It is easy to check that the middle nucleus $M$ of a semifield $(F,*)$ and the middle nucleus
$M'$ of a semifield $(F,\circ )$ where $x\circ y=\b (\a_1(x)*\a_2(y))$ are related by
$M'=\b (A*M*B)$ where $A=\a_1(1), B=\a_2(1).$ In our case this says
$\b (A*L*B)=L,$ equivalently
$\a_1(a,0)=A*\phi (a)=\phi (a)A, \a_2(c,0)=\phi (c)*B=(\phi (c)B_1,\phi (c)^qB_2).$
Here $\phi :L\la L$ and $\phi (1)=1.$
Compatibility is already satisfied.
\par
Condition $(ac,0)=\b (\a_1(a,0)*\a_2(c,0))$ yields $\phi (ac)=\phi (a)\phi (c);$ in
other words, $\phi$ is a field automorphism of $L.$
\par
Equality $(0,bc)=\b (\a_1(0,b)*\a_2(c,0))$ yields $\a_1(0,bc)=\phi (c)\a_1(0,b).$ The case $b=1$ implies
$\a_1(0,c)=C*\phi (c)=(\phi (c)C_1,\phi (c)C_2).$ Analogously
$(0,a^qd)=\b (\a_1(a,0)*\a_2(0,d))$ yields $\a_2(0,a^qd)=(\phi (a),0)*\a_2(0,b).$
The case $d=1$ implies
$\a_2(0,d)=\phi (d^q)*D=(\phi (d^q)D_1,\phi (d)D_2).$
The compatibility equation is
$$(A_1,A_2)*(\phi (b^q)D_1,\phi (b)D_2)=(\phi (b)C_1,\phi (b)C_2)*(B_1,B_2)$$
for all $b\in L.$ The imaginary part yields Equations (\ref{D1D2equ}),
the real part is
$$\phi (b^q)A_1D_1+g(A_2,\phi (b)D_2)=\phi (b)B_1C_1+g(\phi (b)C_2,B_2),$$
which, by comparing coefficients of $\phi (b)$ and $\phi (b^q),$ yields (\ref{wequ}).
\end{proof}

\begin{Proposition}
Let $x*y$ denote the product in $B_q(w).$
The triple $(\a_1,\a_2,\b )$ defines an isotopy from $B_q(w)$ to $B_q(v)$ if and only if
the conditions of Lemma \ref{p4lemma} are satisfied, as well as

\begin{equation}
\label{onlyphivequ}
\phi (v)A_1B_2^q=(A_2B_1)^q+(C_2D_1)^q, \phi (v)A_2B_1=A_1^qB_2+C_1^qD_2
\end{equation}

and

\begin{equation}
\label{phivandwequ}
\phi (v^q)wA_2^qB_2=A_1B_1+A_2B_2^q+C_1D_1+C_2D_2^q,
\phi (v)(A_1B_1+A_2B_2^q)=w(A_2^qB_2+C_2^qD_2).
\end{equation}

\end{Proposition}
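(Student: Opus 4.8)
The strategy is to carry out an explicit computation with the isotopy equation $x\circ y = \beta(\alpha_1(x)\ast\alpha_2(y))$, exactly as in the proof of Lemma \ref{p4lemma}, but now tracking the coefficient $v$ of the target semifield $B_q(v)$. Since $B_q(w)$ and $B_q(v)$ both have middle nucleus $L$ and left/right nucleus $K$, and since these nuclei are isotopy invariants (with $M'=\beta(A\ast M\ast B)$ for the middle nucleus), any isotopy from $B_q(w)$ to $B_q(v)$ must already satisfy all the structural constraints of Lemma \ref{p4lemma}: the maps $\alpha_1,\alpha_2$ have the semilinear shape controlled by a field automorphism $\phi$ of $L$, with nonzero constants $A=(A_1,A_2),\dots,D=(D_1,D_2)$, and Equations (\ref{D1D2equ}) and (\ref{wequ}) hold. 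So the first step is simply to \emph{invoke} Lemma \ref{p4lemma} to reduce to this normal form; nothing new is needed there.

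The second and main step is to compute $\beta^{-1}(x)\circ y = \alpha_1(x)\ast\alpha_2(y)$ and match it against the product rule of $B_q(v)$, namely $(a',b')\star_v(c',d') = (a'c'+b'd'^q+v\,b'^q d', a'^q d'+b'c')$. Concretely I would plug in the four ``basis'' products $(a,0)\circ(c,0)$, $(0,b)\circ(c,0)$, $(a,0)\circ(0,d)$, $(0,b)\circ(0,d)$: the first three already gave (in Lemma \ref{p4lemma}) that $\phi$ is an automorphism and produced the expressions for $\alpha_1(a,0),\alpha_1(0,b),\alpha_2(c,0),\alpha_2(0,d)$ together with (\ref{D1D2equ}) and (\ref{wequ}). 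The genuinely new content comes from the remaining product $(0,b)\circ(0,d)$, which in $B_q(v)$ equals $(\phi(v)\cdot(\text{terms}), \phi(\cdots))$ and on the other side equals $\beta^{-1}$ applied to $\alpha_1(0,b)\ast\alpha_2(0,d) = (\phi(b)C_1,\phi(b)C_2)\ast(\phi(d^q)D_1,\phi(d)D_2)$. Expanding the $\ast$-product of these two vectors using Definition \ref{Bqwdef}, separating real and imaginary parts, and then comparing the coefficients of the $L$-independent monomials $\phi(b)\phi(d)$, $\phi(b)\phi(d)^q$, $\phi(b)^q\phi(d)$, $\phi(b)^q\phi(d)^q$ (these are $\ef_2$-linearly independent as functions of $(b,d)$ because $\phi$ is an automorphism and $q\ne 1$ on $L$), yields exactly the four scalar relations (\ref{onlyphivequ}) and (\ref{phivandwequ}). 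The imaginary part contributes no new equation beyond (\ref{D1D2equ}) since the $\star$-product's imaginary part does not involve $v$ or $w$. Conversely, one checks that if all of (\ref{D1D2equ}), (\ref{wequ}), (\ref{onlyphivequ}), (\ref{phivandwequ}) hold, then the four basis products match, and by bilinearity this forces $\beta(\alpha_1(x)\ast\alpha_2(y)) = x\star_v y$ for all $x,y$, so $(\alpha_1,\alpha_2,\beta)$ is indeed an isotopy $B_q(w)\to B_q(v)$.

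The main obstacle is bookkeeping rather than conceptual: one must be careful that $\beta$ is determined by the compatibility condition $\beta^{-1}(x) = A\ast\alpha_2(x) = \alpha_1(x)\ast B$ (already used in Lemma \ref{p4lemma}), so that applying $\beta$ to a product and then re-expressing via these identities is consistent; and one must correctly track which power of $q$ lands on which $\phi$-argument when expanding $\ast$, since a single misplaced Frobenius would scramble the four coefficient equations. A clean way to avoid sign/exponent slips is to write $g(u,v)$ for the bilinear ``$w$-part'' $u v^q + w u^q v$ appearing in Definition \ref{Bqwdef}, compute the real part of $\alpha_1(0,b)\ast\alpha_2(0,d)$ as $\phi(b)\phi(d^q)C_1D_1 + g(\phi(b)C_2,\,\phi(d)D_2)$ exactly as in the displayed compatibility equation in Lemma \ref{p4lemma}'s proof, and then equate with the $v$-side $\phi(v)\cdot g'(\ldots)$; the comparison of $\phi(b)$- and $\phi(b^q)$-coefficients, together with the corresponding $\phi(d)$/$\phi(d^q)$ split, delivers (\ref{onlyphivequ}) and (\ref{phivandwequ}) directly. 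No step requires anything beyond elementary manipulation of traces and the identity $e^{q}=1/e$ on norm-one elements already exploited in Proposition \ref{Bqwsemiprop}.
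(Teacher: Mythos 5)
Your plan matches the paper's proof: both invoke Lemma \ref{p4lemma} for the structural normal form and then extract the new relations by expanding the one remaining basis product $(0,b)\circ(0,d)=(bd^q+vb^qd,0)$, rewriting the right side via the compatibility identity as $\a_1(r,0)*B$ with $r=bd^q+vb^qd$, and comparing coefficients of $\phi(b)\phi(d)^q$ and $\phi(b)^q\phi(d)$ in the imaginary and real parts to get (\ref{onlyphivequ}) and (\ref{phivandwequ}). The only quibbles are cosmetic (you write $\b^{-1}$ where you mean $\b$, and only two of your four monomials actually occur); the argument is correct and essentially identical to the paper's.
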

\begin{proof}
The condition is
$$(0,b)\circ (0,d)=(bd^q+vb^qd,0),$$
where $v\notin K.$ Equivalently, for $r=bd^q+vb^qd,$
$$(\phi (b)C_1,\phi (b)C_2)*(\phi (d^q)D_1,\phi (d)D_2)=\a_1(r,0)*(B_1,B_2)=(\phi (r)A_1,\phi (r)A_2)*(B_1,B_2).$$
The imaginary part is
$$\phi (b^qd)C_1^qD_2+\phi (bd^q)C_2D_1=\phi (r^q)A_1^qB_2+\phi (r)A_2B_1.$$
Comparing coefficients this becomes (\ref{onlyphivequ}). In the same manner the real part yields
(\ref{phivandwequ}).
\end{proof}

\begin{Proposition}
If $(\a_1,\a_2,\b )$ defines an isotopy from $B_q(w)$ to $B_q(v)$ where $w,v\in L\sm K$
then at least one of the coefficients $A_1,\dots ,D_2$ must vanish.
$B_q(w)$ is isotopic to $B_q(v)$ if and only if $v=\phi (w)$ for some field automorphism
$\phi$ of $L=\ef_{q^2}.$
\end{Proposition}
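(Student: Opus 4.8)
The plan is to combine the two displayed systems of equations from the previous two propositions and derive a contradiction from the assumption that all of $A_1,\dots ,D_2$ are nonzero, then read off the classification. First I would record the full list of constraints: the imaginary-part relations (\ref{D1D2equ}), the $w$-relations (\ref{wequ}), the $v$-relations (\ref{onlyphivequ}), and the mixed relations (\ref{phivandwequ}). Since $\phi$ is a field automorphism fixing $\ef_2$, it fixes $K=\ef_q$; in particular $\phi(v),\phi(w)\notin K$ because $v,w\notin K$, and $\phi$ commutes with the $q$-th power map. The strategy is to treat (\ref{D1D2equ}) as expressing $D_1,D_2$ (say) in terms of the other six constants up to a scalar, substitute into (\ref{wequ}), (\ref{onlyphivequ}), (\ref{phivandwequ}), and show the resulting overdetermined system forces one of the remaining constants to be $0$.

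Concretely, from (\ref{D1D2equ}) write $D_2 = B_1C_2/A_1^q$ and $D_1 = B_2C_1^q/A_2$ (legitimate while all constants are nonzero). Substituting into the first equation of (\ref{wequ}) gives $w A_2^q (B_1C_2/A_1^q) = B_1C_1 + B_2^q C_2$, i.e. a relation among $A_1,A_2,B_1,B_2,C_1,C_2$ and $w$; similarly the second equation of (\ref{wequ}) and both equations of (\ref{onlyphivequ}), (\ref{phivandwequ}) become relations in the same six constants together with $w$ and $\phi(v)$. I would then eliminate $\phi(v)$ between the pair (\ref{onlyphivequ}) and separately manipulate (\ref{phivandwequ}), using the $w$-relations to substitute for the quantities $A_1B_1+A_2B_2^q$ and $A_2^qD_2$, $B_2C_2^q$. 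The expected outcome is that after these substitutions one obtains an equation of the shape $(\text{something in }K) \cdot (\text{nonzero})= 0$ unless $\phi(v)=\phi(w)$ or some coefficient vanishes — mirroring how in Proposition~\ref{Bqwsemiprop} the genuine semifield property excluded the ``all nonzero'' configuration. More precisely, I expect to find that requiring all eight coefficients nonzero forces $\phi(v^q)w$ and related products to lie in $K$, contradicting $w\notin K$ exactly as $\t(w)=t+1/t$ did before via Lemma~\ref{tracezeroinverselemma}.

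The main obstacle will be the bookkeeping: there are eight constants with two homogeneity scalings ($\b$, and the overall scaling in each of $\a_1,\a_2$), so effectively a several-parameter variety cut out by six quadratic equations, and one must organize the elimination so the contradiction emerges cleanly rather than drowning in symmetric functions. I would exploit the $q$-Frobenius symmetry: the equations come in conjugate pairs (real/imaginary, and the two halves of (\ref{wequ}), (\ref{onlyphivequ}), (\ref{phivandwequ})), so passing to $q$-th powers and comparing should halve the work and is the natural way to produce an element fixed by Frobenius, hence in $K$. Once the ``all nonzero'' case is excluded, the remaining cases are those where some $A_i$, $B_i$, $C_i$ or $D_i$ vanishes; by the structural constraints from Lemma~\ref{p4lemma} (e.g. $A$ invertible since $\a_1$ is, so $A_1,A_2$ cannot both vanish, and similarly for $B,C,D$) only finitely many sign-patterns survive, and in each one the surviving equations directly give $\phi(v)=\text{unit}\cdot\phi(w)$ together with the constraint that the unit lies in $K$, whence $v$ and $w$ differ by a $K$-scalar; combined with $v,w\notin K$ and the earlier normalization this yields $v=\phi(w)$.

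Finally, for the order of the autotopism group one sets $v=w$ (and hence, by the above, $\phi$ ranges over the stabilizer of $w$ in $\mathrm{Gal}(L/\ef_2)$, contributing the factor $\iota$) and counts the solutions $(\a_1,\a_2,\b)$ of the normalized system for each such $\phi$. In the surviving sign-pattern the free constants, after accounting for the $\b$-normalization, are governed by two independent norm-type conditions over $L/K$, each contributing $q^2-1$ choices, and the factor $2$ comes from the two essentially different sign-patterns (equivalently, the involution exchanging the roles that swaps $A\leftrightarrow$ its conjugate partner, reflecting the opposite/Knuth symmetry). Assembling these gives $|{\rm Autotop}(B_q(w))| = 2\iota(q^2-1)^2$, completing the proof of Theorem~\ref{Bqwcensustheorem}.
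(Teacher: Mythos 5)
Your overall strategy is the paper's: solve (\ref{D1D2equ}) for $D_1=B_2C_1^q/A_2$, $D_2=B_1C_2/A_1^q$, substitute into the remaining systems, kill the all-nonzero configuration by producing an element of $K$ that should not be there, then run a case analysis on which coefficients vanish. But as written the proposal has two genuine gaps. First, the all-nonzero case is only announced, not executed: the actual mechanism in the paper is to write $\t(w)=w+w^q=l+1/l$ with $l\in K$ (Lemma \ref{tracezeroinverselemma}), deduce from (\ref{wequ}) that $\frac{A_1B_2}{A_2B_1^q}\in\{l,1/l\}$, feed this into (\ref{onlyphivequ}) to force $(A_1B_1)^{q-1}=(C_2/A_1)^{q+1}=1$ and $\phi(v)=w^q$, and only then extract $w\in K$ from (\ref{phivandwequ}); your ``I expect to find that \dots forces $\phi(v^q)w$ and related products to lie in $K$'' skips exactly the part where the work is. You also never isolate the auxiliary fact $w^{q+1}\neq 1$ (a consequence of the trace condition on $w$), which is indispensable: it is what eliminates the residual sign-patterns $A_1B_1=0$ (which yields $w^{q+1}=1$) and $C_1D_1=0$ (which yields $w^q=1/w$), and without it your claim that ``only finitely many sign-patterns survive'' does not close.

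Second, and more seriously, your endgame for the classification is logically invalid. You argue that in each surviving sign-pattern one gets $\phi(v)=(\text{unit})\cdot w$ with the unit in $K$, ``whence $v$ and $w$ differ by a $K$-scalar; combined with $v,w\notin K$ \dots this yields $v=\phi(w)$.'' A $K$-scalar multiple of $w$ lying outside $K$ need not equal $w$, so this inference fails. What actually happens is that the remaining compatibility equations pin the unit down to be exactly $1$: e.g.\ in the pattern $A_1=B_1=C_2=D_2=0$ one gets $\phi(v)=w(A_2/B_2)^{q-1}$ together with $B_2^{q-1}=A_2^{q-1}$ and $(C_1/A_2)^{q+1}=1+w^{q+1}$, forcing $\phi(v)=w$ on the nose. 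These same norm and $(q-1)$-power conditions are what produce the count $(q^2-1)\cdot(q+1)\cdot(q-1)=(q^2-1)^2$ per pattern, with the factor $2$ coming from the two surviving patterns; your ``two independent norm-type conditions each contributing $q^2-1$'' arrives at the right total but does not reflect the actual parametrization and cannot be verified without doing the case analysis you deferred.
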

\begin{proof}
Note that $1/\t (w)\in K$ has absolute trace $0;$ equivalently, $w+w^q=l+1/l$
for some $l\in K,$ and $\lbrace l,1/l\rbrace$ is uniquely determined by $\t (w).$
As a consequence $w^{q+1}\not=1$ as otherwise we would have $w\in\lbrace l,1/l\rbrace\subset K.$
Assume at first all our parameters $A_1,\dots ,D_2$ are nonzero. By (\ref{D1D2equ}), both $D_1$ and $D_2$ can be eliminated
as
$$D_1=B_2C_1^q/A_2, D_2=B_1C_2/A_1^q.$$
Using this in (\ref{wequ}) we obtain
$(A_1/A_2)(C_1/C_2)^q=w+\frac{A_2B_1^q}{A_1B_2}=w^q+\frac{A_1B_2}{A_2B_1^q}.$
As $w+w^q=l+1/l$ it follows $\frac{A_1B_2}{A_2B_1^q}\in\lbrace l,1/l\rbrace ,$
hence $B_2=l(A_2/A_1)B_1^q$ where $l\in K$ is one of the two values satisfying $l+1/l=\t (w).$
This implies $(C_1/C_2)^q=(A_2/A_1)(l+w^q),$ equivalently $C_1=(l+w)(A_2/A_1)^qC_2.$
\par
Use this in Equations (\ref{onlyphivequ}). The first one simplifies to
$$\phi (v)=(A_1B_1)^{q-1}/l+(l+w)(C_2/A_1)^{q+1},$$
whereas the second becomes
$$\phi (v)=l(A_1B_1)^{q-1}+(l+w^q)(C_2/A_1)^{q+1}.$$
This shows $0=(l+1/l)((A_1B_1)^{q-1}+(C_2/A_1)^{q+1})$ which implies
$(A_1B_1)^{q-1}=(C_2/A_1)^{q+1}=1$ and $\phi (v)=w^q.$
The last equation of (\ref{phivandwequ}) simplifies to $(A_2/A_1)^{q+1}=1/l,$ whereas the first yields
$w^2=l(C_1/A_1)^{q+1}+1.$ This gives $w\in K,$ which is a contradiction.
\par
Next we consider the case $C_2=0.$ Then $A_1D_2=0.$ Assume $A_1=0.$ Then $B_1=0=D_2.$
It follows that if $C_2=0$ then $D_2=0.$ It also follows $A_1=B_1=0.$ Only $A_2,B_2,C_1,D_1$ are in
play and they satisfy $A_2D_1=B_2C_1^q.$ Only Equations (\ref{phivandwequ})
need to be considered. They read as
$$wA_2^qB_2+\phi (v)A_2B_2^q=0,  A_2^qB_2+\phi (v)w^qA_2B_2^q=(C_1D_1)^q.$$
Solving both for $\phi (v)$ and substituting $D_1$ yields the compatibility equation
$A_2^{2q}(1+w^{q+1})=C_1^{q+1}B_2^{q-1};$ equivalently $B_2^{q-1}=A_2^{q-1}$ and
$(C_1/A_2)^{q+1}=1+w^{q+1}.$ The resulting value is
$\phi (v)=w,$ so $v$ is obtained by applying Galois isotopy to $w.$
We count $\iota (q^2-1)^2$ autotopisms (case $v=w$) in this case, where $\iota$ is the order of the
stabilizer of $w$ in the Galois group of $L:$ in fact, we have $\iota$ choices for $\phi ,$ for arbitrary
$A_2,$ then $q+1$ choices for $C_1$ and $q-1$ choices for $B_2.$
\par
Condition $D_2=0$ implies $C_2=0,$ so it can be assumed that $C_2D_2\not=0.$
We have $A_2=0$ if and only if $B_2=0,$ and this implies $C_1=D_1=0.$
Consider this case when $A_1,B_1,C_2,D_2$ are the nonzero parameters. We have
$D_2=B_1C_2/A_1^q.$ Aside of that only the last two equations need to be satisfied.
The penultimate equation is $A_1B_1=C_2D_2^q.$ This yields $A_1^{2q}B_1^{q-1}=C_2^{q+1};$
equivalently $(A_1B_1)^{q-1}=(C_2/A_1)^{q+1}.$ This implies $A_1B_1\in K$ and $C_2/A_1$ in the
non-split torus. The last equation is $\phi (v)=w(C_2/A_1)^{q+1}=w.$
In case $v=w$ we obtain the same number of autotopisms as above.
\par
We can assume now that $A_2B_2C_2D_2\not=0.$
Assume $A_1B_1=0.$ Then $A_1=B_1=0$ and also $C_1=D_1=0.$
Choose $C_2=wA_2^qD_2/B_2^q.$ Then the first equation in (\ref{wequ}) is satisfied.
The second is satisfied if and only if $w^{q+1}=1,$ a contradiction.
Assume $C_1D_1=0$ whereas all remaining constants are nonzero. Then we obtain
$\phi (v)=w(A_2/B_2)^{q-1}$ and $\phi (v)=(1/w^q)(A_2/B_2)^{q-1}.$ This yields
$w^q=1/w,$ which is again a contradiction.
\end{proof}

This completes the proof of Theorem \ref{Bqwcensustheorem}.
\par
As mentioned earlier, the smallest order of interest is 256. We see that there are precisely
3 isotopy types of semifields $B_4(w)$ where the $w's$ are chosen as representatives of the orbits of the
Galois group of $\ef_{16}\vert\ef_2$ on elements $w\in\ef_{16}\sm\ef_4.$ Each of those
semifields has 450 autotopisms.

\subsection*{Comparison}
In this subsection we are going to see that the semifields $B_q(w)$ are Knuth equivalent
to the semifields of order $q^4$ in characteristic $2$ which are quadratic over their middle nucleus,
quartic over their center and are not generalized twisted fields or Hughes-Kleinfeld semifields.
In fact, it has been shown in \cite{CPT} that the semifields of order $q^4$ in characteristic $2$ which are quadratic
over the left nucleus, quartic over the center and are neither
generalized twisted fields nor Hughes-Kleinfeld semifields
are precisely those which can be described up to isotopy by a product

\begin{equation}
\label{CPTuvequ}
x*y=(ac+ubd+wbd^q,ad+bc^q),
\end{equation}

where $u,w\in L^{*}$ satisfy a certain polynomial condition.
We are going to see that this semifield is Knuth equivalent to a semifield $B_q(v).$
Observe at first that the substitution $b\mapsto\l b, d\mapsto\l d$ shows that we can assume $u=1.$
We start from Equation (\ref{CPTuvequ}) with $u=1.$
The opposite is obtained by $x\leftrightarrow y.$ This is the multiplication
$$(ac+bd+wb^qd,a^qd+bc).$$
Next we apply the transpose operation (see \cite{KnuthJAlg}).
In order to do this represent the symplectic form on $F^2=L^4$ by
$$\langle (u_1,u_2,u_3,u_4),(v_1,v_2,v_3,v_4)\rangle =tr(u_1v_3+u_2v_4-u_3v_1-u_4v_2),$$
where $tr:L\la\ef_p$ is the absolute trace on $L.$
The spread space corresponding to the pair $(c,d)$ is
$V_{c,d}=\lbrace (a,b,ac+bd+wb^qd,a^qd+bc)\mid a,b\in L\rbrace .$ When is
$(u,v,U,V)$ in the dual of $V_{c,d}$ with respect to the symplectic form? Using basic properties of the trace
shows that this is equivalent to $U=uc+(vd)^q, V=ud+w^qu^qd^q+vc.$
Choosing $u=a,v=b$ this yields the multiplication in the transpose in the form

\begin{equation}
\label{almostBqwequ}
(ac+(bd)^q,ad+w^q(ad)^q+bc).
\end{equation}

We claim that this is isotopic to $B_q(w).$
Let $f(x)=x+w^qx^q.$ Then $f^{-1}(x)=\kappa f(x)$ where $\kappa =1/(1+w^{q+1})\in K.$
Start from Equation (\ref{almostBqwequ}), let the new imaginary part be the old real part and let
the new real part be the image of the old imaginary part under $f^{-1}.$ This yields the product
$(ad+\kappa (bc+w^q(bc)^q),ac+(bd)^q).$ Apply the substitution $c\mapsto d^q, d\mapsto c$
followed by applying $x\mapsto x^q$ to the imaginary part. This yields the product
$(ac+\kappa (bd^q+w^qb^qd,a^qd+bc).$ Clearly we can choose $\kappa =1$ and obtain
the multiplication in $B_q(w^q)$ which as we know is isotopic to $B_q(w).$
\par
Observe that in this section we obtained a complete taxonomy of the semifields of order $q^4$
in characteristic 2 which are quadratic over the middle nucleus and quartic over left and right nucleus.

\section{Non-commutativity}
\label{noncommutsection}

We use a generalization of the Ganley criterion, Corollary 2 of \cite{JBprojgeneral}. It says that
$(F,\circ )$ is isotopic to a commutative semifield if and only if there is some $v\in F^{*}$ such that
$\a (v\circ x)\circ y$ is invariant under the substitution $x\leftrightarrow y$ for all $x,y.$
Here $\a (x)$ is defined by $\a (x)\circ 1=x$ for all $x.$ Let $v=(v_1,v_2).$

\begin{Theorem}
\label{commlformchar2meventheorem}
 $B(2,m,s,l,t)$ for $s>0$ is not isotopic to a commutative semifield.
\end{Theorem}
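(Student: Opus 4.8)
The plan is to apply the generalized Ganley criterion (Corollary~2 of \cite{JBprojgeneral}) recalled above, after first reducing the quadruple. Since ``not isotopic to a commutative semifield'' is an isotopy invariant, Corollary~\ref{secondcor} lets me assume $t=[1,0,u,p_4]$ with $u\in\{0,1\}$; legitimacy of $t$ forces $p_4\not=0$, since otherwise $X=0$ is a root of $p_{s,t}$. Put $\lambda(a):=p_1(a+la^{\s})$; by Corollary~\ref{p1nonzerocor} we have $p_1\not=0$, and since $l\notin(L^{*})^{\s-1}$ the $\ef_2$-linear map $\lambda$ has trivial kernel, hence is a bijection of $L$. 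From (\ref{circgeneralprodequ}) one computes $x\circ(1,0)=(\lambda(a)+p_2b+lp_3b^{\s},\,b)$, so the map $\a$ of the criterion (defined by $\a(x)\circ 1=x$) is $\a(v\circ x)=(a_0,b_0)$ with $b_0=v_1b+v_2a$ and $\lambda(a_0)=Re(v\circ x)+p_2b_0+lp_3b_0^{\s}$; in particular $a_0$ is an $\ef_2$-linear function of $(a,b)$.

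I would then expand $\Psi(x,y):=\a(v\circ x)\circ y$ via (\ref{circgeneralprodequ}). Its imaginary part is $a_0d+b_0c$; requiring invariance under $x=(a,b)\leftrightarrow y=(c,d)$ forces $a_0=v_1a+\mu_0b$ for some $\mu_0\in L$, and inserting this shape into the relation defining $a_0$ and comparing the coefficients of $a,a^{\s},b,b^{\s}$ yields precisely
\begin{align*}
(*)&:\quad p_1v_1+p_2v_2=l(p_1v_1^{\s}+p_3v_2^{\s}),\\
(A)&:\quad p_1\mu_0+p_2v_1=l(p_2v_1^{\s}+p_4v_2^{\s}),\\
(B)&:\quad p_3v_1+p_4v_2=l(p_1\mu_0^{\s}+p_3v_1^{\s}).
\end{align*}
Expanding the real part of $\Psi$ with the now-fixed shapes of $a_0$ and $b_0$, its symmetry amounts to $(*),(A),(B)$ together with one further relation which, for $t=[1,0,u,p_4]$, is a consequence of $(*)$ and $(A)$. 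Hence, were $B(2,m,s,l,t)$ isotopic to a commutative semifield, there would exist $v=(v_1,v_2)\in F^{*}$ and $\mu_0\in L$ satisfying $(*),(A),(B)$; for $t=[1,0,u,p_4]$ these read $v_1=l(v_1^{\s}+uv_2^{\s})$, $\mu_0=lp_4v_2^{\s}$ and $uv_1+p_4v_2=l(\mu_0^{\s}+uv_1^{\s})$.

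If $u=0$, then $(*)$ is $v_1=lv_1^{\s}$, which forces $v_1=0$ (otherwise $l=v_1^{1-\s}\in(L^{*})^{\s-1}$), hence $v_2\not=0$; combining $(A)$ and $(B)$ then gives $p_4v_2=l^{\s+1}p_4^{\s}v_2^{\s^2}$, that is $l^{\s+1}=(p_4v_2^{\s+1})^{-(\s-1)}\in(L^{*})^{\s-1}$, using $\s^2-1=(\s-1)(\s+1)$. Now $L^{*}/(L^{*})^{\s-1}$ is cyclic of order $2^{d}-1$ with $d=\gcd(m,s)$, and $d\mid s$ gives $2^{s}\equiv1\pmod{2^{d}-1}$, so $\gcd(\s+1,2^{d}-1)=\gcd(2^{s}+1,2^{d}-1)=\gcd(2,2^{d}-1)=1$; thus raising to the power $\s+1$ is an automorphism of that quotient, and $l^{\s+1}\in(L^{*})^{\s-1}$ forces $l\in(L^{*})^{\s-1}$, a contradiction. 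If $u=1$, first $v_2\not=0$ (else $(A)$ yields $\mu_0=0$, $(*)$ yields $v_1=lv_1^{\s}$, so $v_1=0$ and $v=0$); solving $(*)$ for $v_1$ --- admissible since $x\mapsto x+lx^{\s}$ is invertible --- and substituting into $(B)$ cancels $v_1$, leaving (via $(A)$) $lv_2^{\s}+p_4v_2=l^{\s+1}p_4^{\s}v_2^{\s^2}$; multiplying through by $p_4$ and putting $Y:=lp_4v_2^{\s-1}\in L^{*}$, this becomes $Y^{\s+1}+Y+p_4^{2}=0$. But by Galois isotopy (Proposition~\ref{lformisotopchar2prop}) the quadruple $[1,0,1,p_4^{2}]$ is legitimate whenever $[1,0,1,p_4]$ is, i.e.\ $X^{\s+1}+X+p_4^{2}$ has no root in $L$; so no such $Y$ exists, again a contradiction. (For $s\in\{0,m\}$ the automorphism $\s$ is trivial and $B(2,m,s,l,t)$ is a field by the discussion of Section~\ref{structuresub}, so the theorem is to be read for $0<s<m$, with $l\not=0$ assumed throughout.)

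The bulk of the work is the coefficient bookkeeping for $\Psi$ in the second step, where one must remember that symmetry of $\Psi(x,y)$ means, for each monomial, equality of the coefficient of $x_i^{\varepsilon}y_j^{\delta}$ with that of $x_j^{\delta}y_i^{\varepsilon}$ --- not a naive transposition --- which is the easiest place to slip. The two genuinely non-formal ingredients are the elementary identity $\gcd(2^{s}+1,2^{\gcd(m,s)}-1)=1$ used for $u=0$, and the observation for $u=1$ that the relation forced by the criterion is, after the substitution $Y=lp_4v_2^{\s-1}$, exactly $p_{s,[1,0,1,p_4^{2}]}(Y)=0$, so that legitimacy of the Galois-conjugate quadruple disposes of it.
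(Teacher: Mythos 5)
Your proof is correct and follows essentially the same route as the paper's: the generalized Ganley criterion, the observation that $\alpha$ fixes the imaginary part so that $\alpha(v\circ x)=(v_1a+\mu_0 b,\,v_1b+v_2a)$, the same three constraints (the paper's $v_1=lv_1^{\s}+lv_2^{\s}$, $v_4=lp_4v_2^{\s}$, $lv_4^{\s}+lv_1^{\s}=v_1+p_4v_2$ are exactly your $(*),(A),(B)$ with $\mu_0=v_4$), and the same two terminal contradictions ($l^{\s+1}\in(L^{*})^{\s-1}$ forcing $l\in(L^{*})^{\s-1}$, and a root of $X^{\s+1}+X+p_4^2$ contradicting legitimacy). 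The only differences are organizational --- you split by the normalized value $u\in\{0,1\}$ rather than by the parity of $m/\gcd(m,s)$, and you extract all constraints from the imaginary-part symmetry together with the definition of $\alpha$, which also spares you the paper's deferral of the case $s=m/2$ to its Section 7.
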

\begin{proof}
We may assume $l\not=0.$ Let at first $m/\gcd(m,s)$ be even.
It can be assumed that $\s^2$ is not the identity on $L.$
We have
$Im(\a (v\circ x))=Im(v\circ x)=v_1b+v_2a.$ Considering the imaginary part of the equation we obtain
$$Re(\a (v\circ x))d+(v_1b+v_2a)c=Re(\a (v\circ y))b+(v_1d+v_2c)a.$$
This shows $Re(\a (v\circ y))=v_3c+v_4d$ for some $v_3,v_4\in L$ and $v_3=v_1.$
Now compare the real parts. They yield 8 equations. Four of them simply say $v_3=v_1=0.$
Two of the remaining four equations are redundant. The remaining conditions
are $v_4=lp_4v_2^{\s}$ (the coefficient of $bc^{\s}$) and $p_4v_2=lv_4^{\s}$
(the coefficient of $ad^{\s}$). If $v_4=0$ then $v_2=0,$ which is a contradiction. Assume $v_4\not=0.$ Then $v_4^{\s^2-1}l^{\s+1}=v_4^{\s-1}.$
This implies $l^{\s+1}\in (L^{*})^{\s-1},$ which implies $l^2\in (L^{*})^{\s-1}$ and finally
the contradiction $l\in (L^{*})^{\s-1}$
\par
Let now $m/\gcd(m,s)$ be odd. We may assume $p_1=p_3=1, p_2=0.$
The same procedure as above
shows $\a (v\circ x)=(v_1a+v_4b,v_1b+v_2a).$ Comparison of the real parts yields 8 equations as before.
Five of those are redundant. The remaining ones are the following:
the coefficient of $ac^{\s}$ yields $v_1=lv_1^{\s}+lv_2^{\s},$ $bc^{\s}$ yields $v_4=lp_4v_2^{\s}$
and $b^{\s}c$ yields $lv_4^{\s}+lv_1^{\s}=v_1+p_4v_2.$ Use the second equation to eliminate $v_4,$
and then consider $w=v_1+lv_1^{\s}$ instead of $v_1.$ The remaining equations are
$w=lv_2^{\s}=lv_4^{\s}+p_4v_2, v_4=lp_4v_2^{\s}.$ After division by the leading term this yields
$X^{\s^2}+BX^{\s}+CX=0,$ where $B=1/(lp_4)^{\s}, C=1/(l^{\s+1}p_4^{\s-1}).$
 If our presemifield is isotopic to a commutative semifield, then this equation has a nonzero root $x\in L.$
Let $y=x^{\s-1}.$ Then $y^{\s+1}+By+C=0.$
A standard substitution shows that this is equivalent to $y^{\s+1}+b(y+1)=0,$ where
$b=B^{\s+1}/C^{\s}=1/p_4^{2\s}.$
On the other hand the condition from Definition \ref{Bfamilyintrodef}
says that $X^{\s+1}+X+p_4$ has no root in $L.$ This is equivalent to $X^{\s+1}+(1/p_4)(X+1)$ having no root,
which contradicts the condition that we just obtained.
\end{proof}

\section{The nuclei}
\label{nucleisection}

\begin{Theorem}
\label{p=2leftrightnucleitheorem}
Let $0<s<m,l\not=0.$ Then $K_1$ is the center, the right and the left nucleus of the semifield
associated to $B(2,m,s,l,t), l\not=0.$
\end{Theorem}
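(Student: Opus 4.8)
The plan is to work with the multiplication $x*y=x\circ\ov{y}$ of the semifield isotopic to $B(2,m,s,l,t)$, in the field-level form (\ref{multFlformchar2equ}), and to compute the left, middle and right nuclei directly, then identify the center as the intersection of left nucleus and the commuting elements. Since isotopic semifields have isomorphic nuclei, it suffices to compute in one concrete isotope; by the reductions already established (Corollary \ref{secondcor}, Theorem \ref{mp=2devencounttheorem}) I may even specialize $t$ to a normal form $[1,0,u,p_4]$ with $u\in\{0,1\}$, which simplifies the bookkeeping. By Proposition \ref{lformswitchCchar2prop} and Proposition \ref{lformsecondisotopprop} I may also assume $0<s\le m/2$.

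First I would compute the \emph{left nucleus}: an element $n$ lies in the left nucleus iff $n*(x*y)=(n*x)*y$ for all $x,y$. Here the key structural observation is that the imaginary part of $x*y$ is $T(xy)$, which is $L$-bilinear and symmetric in a controlled way, so the associativity condition on imaginary parts forces the left-multiplication-by-$n$ map to be ``diagonal'' in the $1,z$ basis, i.e.\ $n$ acts on both real and imaginary parts in a compatible linear fashion; pushing this through the real part (which carries the $\s$-semilinear terms weighted by $l$ and the $p_i$) one gets that $n\in L$ and moreover $n^{\s}=n$, i.e.\ $n\in K_1$. The reason $\s$-invariance appears is exactly that the real part of $x*y$ mixes $x$ with $x^{\s}$ (the $l$-term) and $y$ with $y^{\s}$; associativity on the left forces $n$ to commute with the Frobenius twist, hence $n\in K_1=\ef_{2^{\gcd(m,s)}}$. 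The \emph{right nucleus} is handled symmetrically, using the opposite presemifield: by Proposition \ref{Bp=2oppisoprop} the opposite of $B(2,m,s,l,t)$ is again (isotopic to) a member of the family, so the right-nucleus computation is literally the left-nucleus computation applied to the opposite, yielding $K_1$ again. That $K_1$ is \emph{contained} in each nucleus is the easy direction: one checks that multiplication by an element of $K_1\subseteq L$ is compatible with all the ingredients ($T$, $N$, $\s$, the constants $C_1,C_2\in F$... — actually one must be a little careful since $C_1,C_2\notin K_1$ in general, so this containment also needs the explicit formula, but it is a short direct verification).

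Finally, the \emph{center} is the set of elements of the left nucleus that commute with everything; since we already know the left nucleus is $K_1$, and every element of $K_1$ visibly commutes with every element (multiplication by $k\in K_1$ scales both coordinates $\ef_2$-linearly and the product formula is symmetric under that scaling), the center equals $K_1$ as well. The one point requiring care — and the place I expect the main obstacle — is verifying that the left nucleus is not \emph{larger} than $K_1$: a priori $n$ could be an element of $L$ with $n^{\s}\neq n$, or even an element of $F\setminus L$, and ruling these out requires extracting enough independent linear conditions from the identity $n*(x*y)=(n*x)*y$ as $x,y$ range over $F$. Concretely one substitutes general $x=(a,b)$, $y=(c,d)$, expands both sides using (\ref{multFlformchar2equ}) together with the conjugation formulas from Subsection \ref{standardp=2sub}, and compares the coefficients of the monomials $a^{i}b^{j}c^{k}d^{\ell}$ (and their $\s$-twists); the coefficient of the ``most semilinear'' monomial — the one where the $l$-term on the outer multiplication meets the $l$-term on the inner one, producing an $n^{\s^2}$ against an $n$ — forces $n^{\s^2}=n$ after cancellation, and then comparing a genuinely $\s$-linear (not $\s^2$-linear) coefficient forces $n^{\s}=n$; together with $n\in L$ (forced by the imaginary-part comparison) this gives $n\in K_1$. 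The use of the hypothesis $l\neq0$ is essential precisely here: if $l=0$ the $\s$-twisted terms disappear on one side and the nucleus is genuinely bigger (the Knuth case), which is consistent with Proposition \ref{Knuthl=0prop}. I would organize the write-up so that the bilinear-form/imaginary-part step isolates $n\in L$ cleanly first, after which the remaining real-part comparison is a short finite computation in $L$.
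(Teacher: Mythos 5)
There is a genuine gap, and it is at the foundation of your strategy. The algebra $(F,*)$ with multiplication (\ref{multFlformchar2equ}) is a \emph{presemifield}: it has no multiplicative identity, so the equation $n*(x*y)=(n*x)*y$ does not define the left nucleus of ``the semifield associated to $B(2,m,s,l,t)$'' --- it defines a set that is not an isotopy invariant and is generically trivial. The theorem is about the nuclei of a semifield \emph{isotopic} to the presemifield, and these must be accessed through an isotopy-invariant characterization. This is exactly what the paper uses: by \cite{JBprojgeneral}, Theorem 3, the right nucleus of the associated semifield is in bijection with the invertible $\ef_2$-linear maps $V(x)=\sum_{i=0}^{2m-1}a_ix^{2^i}$ such that for every $y$ there is a $u=u(y)$ with $V(x*y)=x*u$ (and analogously $V(x)*y=x*z(y)$ for the middle nucleus, used in Theorem \ref{p=2middlenucleustheorem}). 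The real work of the proof then consists in writing $x*y=\sum_{k\in E}c_k(y)x^{2^k}$ with $E=\{0,m,s,m+s\}$, comparing coefficients of $x^{2^j}$ for $j\notin E$, and then comparing coefficients in $y$ to force $a_j=a_{j+m}=a_{j-s}+a_{j-s+m}=0$, before a final analysis kills $a_s$ and $a_m$ and leaves $a_0\in K_1$. Your proposed coefficient chase ``$n^{\s^2}=n$ from the doubly twisted monomial, then $n^{\s}=n$'' is not how the argument closes, and without the linear-map framework there is no correct object on which to run it.

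Two secondary points. First, the paper's reduction is the mirror of yours: it computes the \emph{right} nucleus and obtains the left one via the opposite presemifield (Proposition \ref{Bp=2oppisoprop}); either direction is fine, but note that the case $s=m/2$ must be excluded and handled separately (it is settled in Section \ref{s=m/2section}), which your write-up does not anticipate. Second, normalizing $t$ to $[1,0,u,p_4]$ is unnecessary for the left/right nuclei --- the paper's argument runs with arbitrary legitimate $(C_1,C_2)$, using only that $C_1,C_2$ do not both vanish --- whereas the normal form \emph{is} used for the middle nucleus, which is not part of the present statement. The containment $K_1\subseteq$ center is indeed the easy direction, but it too is justified via the general theory (\cite{JBprojgeneral}) rather than by a naive commutation check inside the presemifield.
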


\begin{proof}
We know that $K_1$ is in the center.
Then it will be enough to prove that the left and right
nucleus have at most $2^{\gcd(m,s)}$ elements. It can be assumed that $s\leq m/2.$
The case $s=m/2$ has been handled in
Section \ref{s=m/2section}, so we may actually assume $0<s<m/2.$
By Proposition \ref{Bp=2oppisoprop} it is enough to
consider the right nucleus.
We work with the multiplication $x*y=x\circ\ov{y}$ of Equation (\ref{multFlformchar2equ}) which we write in polynomial
form as
$x*y=\sum_{k\in E}c_k(y)x^{2^k}$ where $E=\lbrace 0,m,s,m+s\rbrace$ and

$$c_0(y)=C_1y^{\s}+C_2\ov{y}^{\s}+yz,
c_m(y)=\ov{C_2}y^{\s}+\ov{C_1}\ov{y}^{\s}+\ov{y}z$$
$$c_s(y)=l(\ov{C_1}y+C_2\ov{y}), c_{m+s}(y)=\ov{c_s(y)}.$$
In particular $y$ is recovered from $y=c_0(y)+\ov{c_m(y)}.$
\par
 The right nucleus is in bijection with the invertible linear mappings
$V(x)=\sum_{i=0}^{2m-1}a_ix^{2^i},$ where $a_i\in F$
such that for each $y\in F$ there exists
$u=u(y)\in F$ satisfying $V(x*y)=x*u$ (see \cite{JBprojgeneral}, Theorem 3).
In coordinates this means

\begin{equation}
\label{rightnukeKantorequ}
\sum_{i=0}^{2m-1}a_i(\sum_{k\in E}c_k(y)x^{2^k})^{2^i}=\sum_{k\in E}c_k(u)x^{2^k}
\end{equation}

Let $j\notin E.$ The coefficient of
$x^{2^j}$ in $(\ref{rightnukeKantorequ})$ shows

\begin{equation}
\label{rightnukeKantorsecondequ}
a_jc_0(y)^{2^j}+a_{j+m}\ov{c_m(y)}^{2^j}+(a_{j-s}+a_{j-s+m})c_s(y)^{2^{j-s}}=0.
\end{equation}

This is a polynomial equation in $y.$ The coefficient in $y^{2^{s+j}}$ shows
$(a_j+a_{j+m})C_1^{2^j}=0.$ The coefficient of $y^{2^{s+m+j}}$ shows
$(a_j+a_{j+m})C_2^{2^j}=0.$ As $C_1,C_2$ do not both vanish it follows
$a_{j+m}=a_j.$
Use this and  $c_0(y)+\ov{c_m(y)}=y$ in (\ref{rightnukeKantorsecondequ}):
$$a_jy^{2^j}+(a_{j-s}+a_{j-s+m})c_s(y)^{2^{j-s}}=0.$$
This shows $a_j=a_{j+m}=a_{j-s}+a_{j-s+m}=0.$ In particular $a_j\not=0$ only if $j\in E.$
\par
We have $a_s+a_{s+m}=0,$ hence $V(x)=a_0x+a_m\ov{x}+a_s(x^{\s}+\ov{x}^{\s}).$
Comparing coefficients of $x^{\s}$ and $\ov{x}^{\s}$ in (\ref{rightnukeKantorequ}) shows
$a_0+a_m\in L,a_s\in L.$ It follows $u=c_0(u)+\ov{c_m(u)}=\l y,$ where $\l =a_0+\ov{a_m}\in L.$
It follows $a_s=0.$ The formula for $c_s(u)$ shows that $a_m,a_0\in L.$
The formula for $c_0(u)$ shows $\l =\l^{\s}$ and finally $a_m=0,a_0\in L.$

\end{proof}

\begin{Theorem}
\label{p=2middlenucleustheorem}
The middle nucleus of the semifield associated to $B(2,m,s,l,t), s>0, l\not=0$ is a quadratic
extension of the center.
\end{Theorem}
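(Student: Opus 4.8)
The plan is to mirror the structure of the proof of Theorem~\ref{p=2leftrightnucleitheorem}, but now characterize the \emph{middle} nucleus of the semifield associated to $B(2,m,s,l,t)$. As before, it suffices to show the middle nucleus has at most $2^{2\gcd(m,s)}$ elements, since we already know $K_1$ sits in the center, and Section~\ref{s=m/2section} treats the case $s=m/2$ (where the middle nucleus $L$ is exactly quadratic over the center $K$); so assume $0<s<m/2$ and work with the multiplication $x*y=x\circ\ov{y}$ of Equation~(\ref{multFlformchar2equ}), written in polynomial form $x*y=\sum_{k\in E}c_k(y)x^{2^k}$ with $E=\lbrace 0,m,s,m+s\rbrace$ and the coefficient functions $c_k(y)$ as displayed in the proof of Theorem~\ref{p=2leftrightnucleitheorem}.

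The middle nucleus is in bijection with the invertible linear maps $W(y)=\sum_{i=0}^{2m-1}b_iy^{2^i}$ such that for each $x\in F$ there exists $u=u(x)\in F$ with $x*W(y)=u*y$ for all $y$ (see \cite{JBprojgeneral}, Theorem~3). First I would substitute $W(y)$ into $\sum_{k\in E}c_k(W(y))x^{2^k}=\sum_{k\in E}c_k(u)y^{2^k}$ and, for each exponent $2^j$ of $y$ with $j\notin E$, collect the coefficient. Since each $c_k(y)$ is $\ef_2$-linear in $y,\ov{y}$, composing with $W$ produces, for a fixed $x$, a linear combination of the $b_i$ against the various $c_k(x)$; the right-hand side only contributes exponents in $E$, so the $j\notin E$ coefficients give a system of linear equations forcing most $b_i$ to vanish. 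I expect the same collapse as in the left/right case: comparing coefficients of $y^{2^j}$ for $j\notin E$ should kill all $b_i$ except possibly $b_0,b_m,b_s,b_{m+s}$, so that $W(y)=b_0 y+b_m\ov{y}+b_s y^{\s}+b_{m+s}\ov{y}^{\s}$ — a $4$-dimensional space over $\ef_2$, i.e.\ quadratic over $K_1$ in the right sense, and then one checks $W$ does land in the middle nucleus to get equality.

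The key intermediate steps, in order, are: (i) write out $x*W(y)=u*y$ and extract the coefficient of each $x^{2^j}$; (ii) for $j\notin E$, further expand in powers of $y$ and use that $C_1,C_2$ are not both zero (as in the $a_{j+m}=a_j$ step of Theorem~\ref{p=2leftrightnucleitheorem}, plus the condition $0<s<m/2$ ensuring the exponents $j, j\pm s, j\pm 2s, \ldots$ are in "general position" modulo $2m$) to force the corresponding $b_i$ to zero; (iii) conclude $W$ has support in $E$; (iv) plug $W(y)=b_0y+b_m\ov y+b_sy^{\s}+b_{m+s}\ov y^{\s}$ back into the defining identity, compare coefficients of $x^{2^k}$ for $k\in E$ on both sides, and use the formulas for $c_0,c_m,c_s,c_{m+s}$ together with the action of $\s$ on $L$ (Lemma~1) and the legitimacy condition on $t$ to pin down exactly which $4$-dimensional family of $(b_0,b_m,b_s,b_{m+s})$ actually works; (v) identify this family with a quadratic extension of $K_1$ inside $F$, matching the answer $L$ in the $s=m/2$ case for consistency.

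The main obstacle I anticipate is step (iv): unlike the left/right nucleus computation, where the surviving parameters were quickly forced down to a single $L$-worth of scalars, here the surviving space is genuinely two-dimensional over the center and one must identify it precisely — showing it is closed under the semifield multiplication (hence a subfield) and has order exactly $2^{2\gcd(m,s)}$, not merely bounded by it. This requires carefully tracking the nonhomogeneous interaction between the $z$-terms in $c_0,c_m$ (which mix real and imaginary parts) and the $l$-scaled terms in $c_s,c_{m+s}$, and invoking that $p_{s,t}(X)$ has no root in $L$ to rule out the degenerate sub-cases. A secondary subtlety is making sure the "general position of exponents" argument in step (ii) is valid across all $0<s<m/2$ with $\gcd(m,s)=d$; one may need to handle separately the residues $j\bmod d$ and argue within each residue class, exactly as the counting in Lemma~\ref{Omegasizelemma} splits on $m/d$ parity — but the conclusion should be uniform.
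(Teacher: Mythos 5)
There is a genuine gap, and it sits exactly where you flag your ``main obstacle.'' Your opening reduction --- ``it suffices to show the middle nucleus has at most $2^{2\gcd(m,s)}$ elements, since we already know $K_1$ sits in the center'' --- is false: knowing $K_1$ lies in the center only gives the lower bound $2^{d}$, not $2^{2d}$, so an upper bound of $2^{2d}$ would prove the middle nucleus is \emph{at most} quadratic over the center, not that it \emph{is} a quadratic extension. The lower bound is the hard half, and the paper spends most of its effort there: after reducing (by a short polynomial argument) to $L$-linear maps $V(a,b)=(Aa+Bb,Ca+Db)$ with an induced $z=(Dc+Bd,Cc+Ad)$, it splits into the cases $m/\gcd(m,s)$ even (normal form $[1,0,0,p_4]$, where the solution set is read off as $V(a,b)=(Aa,A^{\s}b)$, $A\in K_2$, and $[K_2:K_1]=2$ follows from $\gcd(m,2s)=2d$) and $m/\gcd(m,s)$ odd (normal form $[1,0,1,p_4]$, where one must show the system (\ref{lastequ}) has exactly $2^{2d}$ solutions; this uses the legitimacy of $t$ to invert $f(C)=p_4^{\s}C^{\s^2}+C^{\s}+p_4C$ and then the Helleseth--Kholosha-style trace computation $Tr(C)^2=\l\,Tr(C)$ to prove $Tr(C)=\l$, which is what makes $D+D^{\s}=\l+C^{\s}$ solvable). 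None of this machinery, nor any substitute for it, appears in your plan; ``one checks $W$ does land in the middle nucleus'' is precisely the step that needs the argument.

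Two further problems with the mechanics. First, in step (ii) you propose to extract the coefficient of $x^{2^j}$ for $j\notin E$ in $\sum_{k\in E}c_k(W(y))x^{2^k}=\sum_{k\in E}c_k(y)\,u(x)^{2^k}$ and use it to kill the $b_i$; but the left-hand side, where $W$ lives, has $x$-support contained in $E$ by construction, so those equations constrain the coefficients of $u$, not the $b_i$. To constrain $W$ you would have to reorganize the identity as a polynomial in $y$ (or follow the paper's route of first forcing $V$ to be $L$-linear in the coordinates $(a,b)$). Second, the surviving family $W(y)=b_0y+b_m\ov{y}+b_sy^{\s}+b_{m+s}\ov{y}^{\s}$ with $b_i\in F$ is not ``a $4$-dimensional space over $\ef_2$''; a priori it is $8m$-dimensional, and the whole content of the theorem is that the compatibility equations cut it down to exactly $2d$ dimensions --- which again requires the case analysis and the solvability argument you have not supplied.
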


\begin{proof}
Let at first $m/\gcd(m,s)$ be even.
We know that we can assume $p_1=1, p_2=p_3=0$ in this case and have
$x\circ y=(ac^{\s}+la^{\s}c+p_4bd^{\s}+lp_4b^{\s}d,ad+bc).$
It can be assumed that $\s^2$ is not the identity on $L,$ the case $s=m/2$ having been handled in
Section \ref{s=m/2section}.
The condition to determine the middle nucleus is $V(x)\circ y=x\circ z$ where $z=z(y)$
(see \cite{JBprojgeneral}, Theorem 2).
An obvious polynomial argument shows that $V(x)$ has the
form $V(a,b)=(Aa+Bb,Ca+Db),$ and $z=(Ec+Fd,Gc+Hd).$ Comparison of the imaginary parts shows
$E=D,F=B,G=C,H=A,$ hence $z=(Dc+Bd,Cc+Ad).$ Compare the real parts. The coefficients of
$ac^{\s},bc^{\s},a^{\s}c,b^{\s}c,ad^{\s},bd^{\s},a^{\s}d,b^{\s}d$ yield 8 equations for the unknowns
$A,B,C,D\in L.$ The latter four are redundant, the first four are $A=D^{\s},B=p_4C^{\s}, A^{\s}=D, B^{\s}=p_4C.$
Assume $B\not=0.$ Then $B^{\s^2-1}=p_4^{\s-1},$ hence $p_4/B^{\s+1}\in K_1.$ This shows that
$p_4\in (L^{*})^{\s+1},$ contradicting the existence condition of $p_4.$ It follows $B=C=0$ and
$A\in K_2,$ the fixed field of $\s^2, D=A^{\s}.$ The solutions are $V(a,b)=(Aa,A^{\s}b),$ where $A\in K_2.$
\par
Let now $m/\gcd(m,s)$ be odd.
It can be assumed that $p_1=1, p_2=0,p_3=1.$ Our presemifield product is
$x\circ y=(ac^{\s}+la^{\s}c+ad^{\s}+lb^{\s}c+p_4bd^{\s}+lp_4b^{\s}d,ad+bc).$ As before
we have $V(a,b)=(Aa+Bb,Ca+Db),z=(Dc+Bd,Cc+Ad)$
and the condition is $V(x)\circ y=x\circ z.$
Compare the real parts. As before 8 equations arise, the latter half of which are redundant. The other four are
$$A=D^{\s}+C^{\s}, B=p_4C^{\s}, A^{\s}+C^{\s}=D, B^{\s}+D^{\s}=D+p_4C.$$
Use the first two to eliminate $A,B.$
This shows that the middle nucleus is in bijection with the space of $(C,D)\in L^2$ satisfying

\begin{equation}
\label{lastequ}
C^{\s}+C^{\s^2}=D+D^{\s^2}, \;\; p_4C+p_4^{\s}C^{\s^2}=D+D^{\s}.
\end{equation}

Combining those two equations yields
$C^{\s}+C^{\s^2}+(p_4C)^{\s}+p_4^{\s^2}C^{\s^3}=p_4C+p_4^{\s}C^{\s^2};$ equivalently
$f(C)=p_4^{\s}C^{\s^2}+C^{\s}+p_4C=\l\in K_1.$
Assume that $f(X)$ is not invertible. The substitution $X\mapsto X/p_4$ shows that
$X^{\s+1}+X+p_4^2$ has a root in $L.$
This contradicts the existence condition for
$p_4.$
Let now $\l\in K_1$ be given and $C\in L$ the unique element such that $f(C)=\l .$ The sum of the Equations (\ref{lastequ})
is $\l +C^{\s^2}=D^{\s}+D^{\s^2};$ equivalently, $D+D^{\s}=\l +C^{\s}.$ In order to obtain solutions
$(C,D)$ it must be the case that $Tr (\l +C^{\s})=0$ where $Tr$ is the trace $:L\la K_1.$
As $Tr(\l )=\l$ we must show $Tr(C)=\l .$
Clearly we are done
when this has been proved. We apply the method used in \cite{HelKol}. In fact,
$$Tr(C)^2=Tr(C^{\s +\s})=Tr(C^{\s}(p_4^{\s}C^{\s^2}+p_4C+\l ))=Tr(\l C^{\s})=\l Tr(C).$$
This shows $Tr(C)\in\lbrace 0,\l\rbrace .$ Assume $Tr(C)=0.$ Then $C=u+u^{\s}$ for some $u\in L.$ It follows
$\l =f(C)=f(u)+f(u^{\s}).$ Applying this twice shows $f(u^{\s^2})=f(u).$ As $\s$ has odd order $2n+1$ this
yields $f(u^{\s^{2k}})=f(u)$ and finally the contradiction $f(u)=f(u^{\s^{2k+1}})=f(u)+\l .$
\end{proof}

\section{Conclusion}
\label{conclusionsection}

We start from a relation between projective polynomials over finite fields and Knuth semifields which we use for
the construction of a new family of semifields in characteristic $2.$ Those semifields are never isotopic to
commutative semifields. We determine their nuclei. A parametric special case are the characteristic $2$ semifields
of order $q^4$ with middle nucleus of order $q^2$ and center of order $q$ which are different from the
twisted fields and from the Hughes-Kleinfeld semifields. In this case we obtain a complete taxonomy. This includes
the determination of the group of autotopisms.

\end{document}